\numberwithin{equation}{section}
\newtheorem{theorem}{Theorem}[section]
\newtheorem{lemma}{Lemma}[section]
\newtheorem{definition}{Definition}[section]
\newtheorem{remark}{Remark}[section]
\newtheorem{proposition}{Proposition}[section]
\newcommand{\eqn}{\begin{eqnarray}}
\newcommand{\een}{\end{eqnarray}}
\DeclareMathOperator{\dv}{div}
\DeclareMathOperator{\curl}{curl}
 \newcommand{\norm}{\@ifstar{\@normb}{\@normi}}
 \newcommand{\@normb}[2]{\left\Vert{#1}\right\Vert_{#2}}
 \newcommand{\@normi}[2]{\Vert{#1}\Vert_{#2}}
\begin{document}

\title{On the double Beltrami states in Hall magnetohydrodynamics}

\author{Hantaek Bae}
\address[Hantaek Bae]{Department of Mathematical Sciences, Ulsan National Institute of Science and Technology (UNIST), Republic of Korea}
\email{hantaek@unist.ac.kr}

\author{Kyungkeun Kang}
\address[Kyungkeun Kang]{Department of Mathematics, Yonsei University, Republic of Korea}
\email{kkang@yonsei.ac.kr} 

\author{Jaeyong Shin}
\address[Jaeyong Shin]{Department of Mathematics, Yonsei University, Republic of Korea}
\email{sinjaey@yonsei.ac.kr}

\date{\today}
\keywords{Hall MHD, Double Beltrami States, Variational Method, Exact Solution, Stability}
\subjclass[2010]{35J25, 35Q35, 35Q85, 76W05}

\begin{abstract}
In this paper, we investigate double Beltrami states in the Hall magnetohydrodynamic (Hall MHD) equations. Initially, we examine the double Beltrami states as a special class of steady solutions to the ideal Hall MHD equations, which are closely related to Beltrami flows in incompressible fluid dynamics. Specifically, we classify the double Beltrami states and show that they can be derived by using the variational method as energy minimizers, subject to the conservation of two helicities. We then extend our analysis to time-dependent double Beltrami states in the viscous and resistive Hall MHD equations, exploring their exact form and stability properties.
\end{abstract}

\maketitle

\vspace{-2ex}

%%%%%%%%%%%
\section{Introduction}
%%%%%%%%%%%
The Magnetohydrodynamics  (MHD in short) is the physical-mathematical framework that concerns the dynamics of magnetic fields in electrically conducting fluids. The electromagnetic field, the electric field $E$, and the magnetic field $B$  are governed by Maxwell's equations
\[
\begin{split}
\text{Amp\`ere's Law:} \ & \nabla \times  B=J, \\
\text{Faraday's Law:} \ & \nabla \times  E=-B_{t},\\
\text{Ohm's Law for resistive MHD:} \ &E+u\times B=\eta J, \\
\text{Gauss's Law for magnetism:} \ & \dv B=0, 
\end{split}
\]
where $u$ is the velocity field, $J$ is the current density, and $\eta$ is the resistivity (or magnetic diffusivity) of $B$. From these equations, we derive the induction equation: 
\[
B_{t}  + \nabla \times (B\times u) - \eta\Delta B=0.
\]
When a conducting fluid is permeated by $B$, the Lorentz force $J\times B$ drives motion in the fluid:
\[
u_{t}  +u\cdot \nabla u +\nabla p-\nu\Delta u=J\times B, \quad \dv u=0,
\]
where $\nu$ is the viscosity of the fluid and $p$ is the fluid pressure.

MHD provides a macroscopic description for fusion plasmas, the solar interior and its atmosphere, the Earth's magnetosphere and inner core, etc. However, {MHD} is deficient in many {respects}: for example, Ohm's law neglects the influence of the electric current in the Lorentz force. To resolve this issue, Lighthill derived a generalized Ohm's law \cite{Lighthill}:
\eqn \label{Generalized Ohm}
E+u\times B=\eta J+h\left(J\times B-\nabla p_{e}\right),
\een 
where $p_{e}$ is the electron pressure and $h$ is the ion skin depth. The  second term on the right-hand side of (\ref{Generalized Ohm})  is called the Hall term.  In terms of $(u,p,B)$, we obtain Hall MHD:
\begin{equation}\label{H MHD}
\begin{aligned}
&u_{t}  +\omega \times u+\nabla \overline{p}-\nu\Delta u=J\times B, \\
&B_{t}  +  \nabla \times (B\times u)+h\nabla \times \left(J\times B\right)- \eta\Delta B=0,  \\
&\dv u=\dv B=0,
\end{aligned}
\end{equation}
where $\omega=\nabla \times u$, $J=\nabla\times B$, and $\overline{p}=p+|u|^{2}/2$.

There is a large amount of {evidences} supporting that the Hall term plays a crucial role in describing many phenomena in plasma physics \cite{Balbus, Forbes, Homann, Mininni, Shalybkov, Shay, Wardle}. In particular, Hall MHD explains the magnetic reconnection on the Sun which is very important role in acceleration plasma by converting magnetic energy into bulk kinetic energy \cite{Forbes, Homann}. From the mathematical point of view, Hall MHD is derived from either two fluids model or kinetic models in a mathematically rigorous way  \cite{Acheritogaray}. Since then, Hall MHD  has been actively studied mathematically: see \cite{Bae Kang, Chae Degond Liu, Chae Lee, Chae Schonbek, Chae Wan Wu, Chae Weng, Chae Wolf 1, Chae Wolf 2, Dai 1, Dai 2, Dai 3, Danchin Tan 2, Danchin Tan 3, Jeong Oh, Kwak, Yamazaki} {and references therein}.

\vspace{1ex}

In this paper, we investigate double Beltrami states of the form \eqref{eq:HMHD-Beltrami} appearing in Hall MHD. The double Beltrami states have been studied in \cite{GIUAK, Kagan Mahajan, Mahajan Yoshida} and {are} known to meet the experimental results \cite{Priest Forbes, Schindler}. To introduce the double Beltrami states and explain the motivation behind our work, we briefly review Beltrami flows in incompressible fluid models in Section \ref{sec:1.1}, and force-free fields in MHD ($h=0$ in \eqref{H MHD}) in Section \ref{sec:1.2}.

%%%%%%%%%%%%%%%%
\subsection{Beltrami flows} \label{sec:1.1}
%%%%%%%%%%%%%%
We first review Beltrami flows. A fluid flow $u$ is called a Beltrami flow if there exists a scalar function $\lambda(x)\not\equiv0$ such that 
\begin{equation}\label{Beltrami flow}
\nabla \times  u(x)=\lambda(x)u(x),\quad\dv u(x)=0
\end{equation}
for all $x$ {in some domain}. Beltrami flows are special stationary solutions of the incompressible Euler equations. {A well-known family of Beltrami flows corresponding to a constant $\lambda$ is the Arnold-Beltrami-Childress (ABC) flows \cite{Arnold65, Childress, Dombre}: 
\eqn \label{ABC}
u(x)=\begin{pmatrix}
A\sin{\lambda x_{3}}+C\cos{\lambda x_{2}}\\
B\sin{\lambda x_{1}}+A\cos{\lambda x_{3}}\\
C\sin{\lambda x_{2}}+B\cos{\lambda x_{1}}
\end{pmatrix}.
\een
For a classification of Beltrami flows with constant $\lambda$ and more examples of Beltrami flows with non-constant $\lambda$, see \cite[Section 2.3.2]{Majda Bertozzi}. 
While Beltrami flows are smooth as long as $\lambda$ is smooth, we emphasize that they do not belong to $L^{2}(\mathbb{R}^{3})$. More precisely, there are no Beltrami flows in $L^{p}(\mathbb{R}^{3})$ for $p\in[2,3]$ \cite{Chae Constantin, Nadirashvili} even though there is a Beltrami flow  decaying as $u(x)\sim C/|x|$ as $|x|\rightarrow \infty$ \cite{Enciso Peralta-Salas}. It is conjectured by Arnold \cite{Arnold65} that Beltrami flows can exhibit the Lagrangian turbulence and chaotic behavior of trajectories to which there are several numerical evidences \cite{Arnold Khesin, Dombre, Henon}.
Interestingly, any incompressible flows can be represented as a superposition of Beltrami flows in periodic domain {$\mathbb{T}^{3}=[-\pi,\pi]^{3}$} \cite{Constantin Majda}. This idea is employed to construct dissipative continuous (incompressible) Euler flows in \cite{De Lellis Szekelyhidi}. For more properties of Beltrami flows, see \cite{Arnold Khesin, Chae Constantin, Chae Wolf, Enciso Peralta-Salas, Majda Bertozzi} and references therein.}

\vspace{1ex}

We also consider time-dependent Beltrami flows of the incompressible Navier-Stokes equations:
\eqn \label{NS}
u_{t}+u\cdot\nabla u+\nabla p-\nu \Delta u=0,\quad \dv u=0.
\een
Suppose that $\overline{u}=\overline{u}(t,x)$ satisfies \eqref{NS} and $\nabla \times \overline{u}=\lambda \overline{u}$ with a constant $\lambda$. Then, \eqref{NS} is reduced to 
\[
\overline{u}_t=\nu\Delta \overline{u}=-\nu\nabla\times\nabla\times \overline{u}=-\nu\lambda^2 \overline{u}.
\]
By choosing an initial Beltrami flow $\overline{u}_0=\overline{u}_0(x)$, the time-dependent Beltrami flow of  \eqref{NS}  is given by
\eqn \label{Trkalian flow}
\overline{u}(t,x)=e^{-\nu\lambda^{2}t}\overline{u}_0(x)
\een
which is also called a Trkalian flow  \cite{Lakhtakia, Trkal}.

%%%%%%%%%%
\subsection{Force-free fields}\label{sec:1.2}
%%%%%%%%%%
We now consider MHD:
\eqn \label{MHD}
\begin{aligned}
&u_{t}  +\omega \times u +\nabla \overline{p}-\nu \Delta u=J\times B,  \\
&B_{t}  +  \nabla \times (B\times u)-\eta \Delta B=0,  \\
&\dv u=\dv B=0.
\end{aligned}
\een 
 Formally, steady solutions of the ideal MHD ($\nu=\eta=0$ in (\ref{MHD})) are given by
\[
\omega \times u +\nabla \overline{p}=J\times B, \quad  \nabla \times (B\times u)=0.
\]
The magnetic energy $\mathcal{E}_{B}$ and the magnetic helicity $\mathcal{H}_{B}$ are defined by 
\[
\mathcal{E}_{B}(t)=\int_{\Omega} |B(t,x)|^{2}\,dx, \quad \mathcal{H}_{B}(t)=\int_{\Omega} (A\cdot  B)(t,x)\,dx, \quad \nabla\times A=B,
\]
where we choose $\Omega$ below. Woltjer \cite{Woltjer} applied the variational approach to show that a force-free field $B$ defined by 
\begin{equation}\label{eq:MHD-Beltrami-state}
\nabla\times B(x)=\lambda B(x),
\end{equation} 
with a constant $\lambda$, emerges as the magnetic energy $\mathcal{E}_B$ lowering process under the conservation of $\mathcal{H}_B$. As the magnetic energy reaches to a minimum level, the fluid comes to rest and the Lorentz force $J\times B$ vanishes. That is why we call them force-free fields. To demonstrate this process more clearly, we define   
\[
L^{2}_{\text{curl}}(\Omega)=\left\{B\in L^{2}(\Omega): B=\nabla \times A, \ A\in L^{2}(\Omega)\right\},
\]
where $\Omega=\mathbb{T}^3$ or $\Omega$ is an open and bounded domain $\Omega_0$ with zero boundary condition. Then, the minimizer of $\mathcal{E}_{B}$ over
\eqn \label{force-free admissible}
\mathcal{A}^h(\Omega)=\left\{B\in L^{2}_{\text{curl}}(\Omega): \mathcal{H}_{B}=h\right\},\quad h\in\mathbb{R}\setminus\{0\}
\een
satisfies \eqref{eq:MHD-Beltrami-state}. The existence of a magnetic energy minimizer in $\mathcal{A}^{h}(\Omega)$ is proved in \cite{Laurence Avellaneda}.

In the proof of \cite{Laurence Avellaneda, Woltjer}, the conservation of $\mathcal{H}_{B}$ is required. In \cite{Kang Lee}, the conservation of $\mathcal{H}_{B}$ and the energy
\[
\mathcal{E}=\int_{\Omega} \left(|u(t,x)|^2+|B(t,x)|^2\right)\,dx
\]
on $\Omega=\mathbb{R}^{3}$  is proved as follows: (1) $\mathcal{H}_{B}$ is conserved when $(u,B)\in L^3([0,T];L^3)$; (2) $\mathcal{E}$ is conserved when 
\[
(u,B)\in L^3\left([0,T];B^{\alpha_1}_{3,c(\mathbb{N})}\right) \times L^3\left([0,T];B^{\alpha_2}_{3,c(\mathbb{N})}\right), \quad \alpha_1\geq 1/3, \ \ \alpha_1+2\alpha_2\geq 1.
\]
(The space $B^{\alpha}_{3,c(\mathbb{N})}$ will be defined in Section \ref{sec:3} (Definition \ref{Definition 3.1}).) On the other hand, a weak solution of ideal MHD which does not conserve $\mathcal{H}_{B}$ is constructed in \cite{Beekie Buckmaster Vicol}.

\begin{remark} \upshape
In \cite{Taylor74}, Taylor showed that force-free fields with non-constant $\lambda$ come from the process minimizing the magnetic energy under local magnetic helicity conservation. In the same paper, it was conjectured that in low resistivity region ($0<\eta\ll1$) (resistive) MHD conserves the (global) magnetic helicity $\mathcal{H}_B$ in time. Recently, Taylor's conjecture was proved in \cite{FL20, FLMV22}, where a weak limit as $\nu,\eta\rightarrow0$ of Leray-Hopf weak solutions of \eqref{MHD} conserve the magnetic helicity $\mathcal{H}_B$. Following \cite{FL20} and Woltjer's variational principle, the stability of (\ref{eq:MHD-Beltrami-state}) and Chandrasekhar's force-free fields \cite{Chandrasekhar, Chandrasekhar Kendall} is established in \cite{Abe}.
\end{remark}

The force-free fields \eqref{eq:MHD-Beltrami-state} can be used to generate time-dependent force-free fields to \eqref{MHD}: 
\eqn \label{Exact MHD}
(0, \overline{B})(t,x)=(0,e^{-\eta\lambda^{2}t}\overline{B}_{0}(x)), \quad \nabla\times \overline{B}_{0}(x)=\lambda \overline{B}_{0}(x).
\een
As a follow-up to \eqref{Trkalian flow} and (\ref{Exact MHD}), we can show the stability of $\overline{u}$ in \eqref{NS} and $(0,\overline{B})$ in \eqref{MHD}. However, we do not present it since we will give similar but more complicated results to double Beltrami states in Hall MHD.

%%%%%%%%%%%%%%%%%%%%%%%%%%%%
\subsection{Brief description of our results}
%%%%%%%%%%%%%%%%%%%%%%%%%%%%%
In Section \ref{sec:2.1}, we introduce the double Beltrami states of the form
\[
B(x)+\omega(x)=\alpha(x)u(x),\quad u(x)-J(x)=-\beta(x) B(x)
\]
as a special class of steady solutions to  the ideal Hall MHD. We then proceed to classify the double Beltrami states, and to apply Woltjer-type variational principle to derive them. In doing so, we need to introduce one more helicity, the magneto-vorticity helicity \cite{Turner}:
\[
\mathcal{H}_{B+\omega}(t)=\int_{\Omega} \left[(A+u)\cdot(B+\omega)\right](t,x)\,dx.
\]
We also answer when the energy, the magneto helicity, and the magneto-vorticity helicity are preserved. 

In Section \ref{sec:2.2}, we deal with the time-dependent double Beltrami states. We first find the exact form of the time-dependent double Beltrami states as solutions of \eqref{H MHD}. We then establish the stability of the time-dependent double Beltrami states.

Before presenting our results, we explain the domains used in this paper. When applying the variational method (Proposition \ref{Prop:2.1} and Proposition \ref{Prop:2.2}), we take bounded domains $\Omega=\mathbb{T}^3$ or $\Omega=\Omega_0$ (defined in Section \ref{sec:1.2}). When we deal with the results of conserved quantities of the ideal Hall MHD (Proposition \ref{Prop:2.3}) and stability results of the viscous and resistive Hall MHD (Theorem \ref{thm:stability1} and Theorem \ref{Theorem 2.3}), we choose $\Omega=\mathbb{T}^3$ or $\Omega=\mathbb{R}^3$. To avoid any confusion, we will specify which domain is used in each case.

%%%%%%%%%%%%%%%
\section{Double Beltrami states in Hall MHD}
%%%%%%%%%%%%%%

%%%%%%%%%%%%%%%%%%%%%
\subsection{Double Beltrami states} \label{sec:2.1}
%%%%%%%%%%%%%%%%%%%%%
We recall the ideal Hall MHD:
\begin{subequations}\label{Ideal Hall MHD}
\begin{align}
&u_{t}  +\omega \times u +\nabla \overline{p}=J\times B,  \label{Ideal Hall MHD a}\\
&B_{t}  +  \nabla \times (B\times u)+\nabla \times \left(J\times B\right)=0,  \label{Ideal Hall MHD b}\\
&\dv u= \dv B=0, \label{Ideal Hall MHD c}
\end{align}
\end{subequations}
where  $\omega=\nabla \times u$ and $J=\nabla \times B$. We first write (\ref{Ideal Hall MHD}) as follows:
\begin{equation}\label{H MHD 1}
\begin{split}
u_{t}&=-(u-J)\times B-(B+\omega)\times u-\nabla \overline{p},\\
B_{t}&=\nabla\times\left((u-J)\times B\right).
\end{split}
\end{equation}
Then, steady solutions of (\ref{Ideal Hall MHD}) are given by divergence-free vector fields $(u,B)$ satisfying
\[
(B+\omega)\times u+\nabla p_1=0,\quad (u-J)\times B+\nabla p_2=0,\quad \overline{p}=p_1+p_2,
\]
where the scalar functions $p_1$ and $p_2$ satisfy 
\[
u\cdot \nabla p_1=(B+\omega)\cdot \nabla p_1=0,\quad B\cdot\nabla p_2=(u-J)\cdot\nabla p_2=0.
\] 

If $p_1$ and $p_2$ are constants, then the steady solutions become
\[
(B+\omega)\times u=0, \quad (u-J)\times B=0
\]
or equivalently
\eqn \label{eq:HMHD-Beltrami}
B(x)+\omega(x)=\alpha(x) u(x), \quad u(x)-J(x)=-\beta(x) B(x),
\een 
where $\alpha$ and $\beta$ are non-zero scalar functions satisfying  
\eqn \label{geometric condition}
u\cdot\nabla\alpha=(B+\omega)\cdot\nabla\alpha=0,\quad B\cdot\nabla\beta=(u-J)\cdot\nabla\beta=0
\een
due to the divergence-free condition \eqref{Ideal Hall MHD c}. We refer to divergence-free vector fields $(u,B)$ of \eqref{eq:HMHD-Beltrami} as double Beltrami states and suppose $\alpha$ and $\beta$ as at least $C^1$ functions. We can rewrite (\ref{eq:HMHD-Beltrami}) as double curl equations 
\begin{subequations}\label{Double-Beltrami}
\begin{align}
&\nabla\times\nabla\times u-(\alpha+\beta)\nabla\times u+(1+\alpha\beta)u=\nabla \alpha \times u, \label{Double-Beltrami a}\\
&\nabla\times\nabla\times B-(\alpha+\beta)\nabla\times B+(1+\alpha\beta)B=\nabla \beta \times B. \label{Double-Beltrami b}
\end{align}
\end{subequations}
Since $\nabla\times\nabla\times v=-\Delta v$ for any divergence-free vector fields $v$, the elliptic equations \eqref{Double-Beltrami} imply that double Beltrami states $(u,B)$ of \eqref{eq:HMHD-Beltrami} are $C^{k+1,\delta}$ functions if $\alpha,\beta$ are $C^{k,\delta}$ functions for $k\in\mathbb{N}$ and $\delta\in(0,1)$. Double Beltrami states of the form (\ref{eq:HMHD-Beltrami}) are introduced in \cite{Mahajan Yoshida} with constant $\alpha, \beta$, and they are special type of steady solutions of Hall MHD which do not appear in the incompressible Euler equations or ideal MHD.

%%%%%%%%%%%%%%%%%%%%%%%
\subsubsection{\bf Classification of double Beltrami states}
%%%%%%%%%%%%%%%%%%%%%%

As a first step of investigating double Beltrami states, we classify them as superposition of two Beltrami flows. To this end, we first consider single Beltrami flows and its necessary condition satisfying \eqref{eq:HMHD-Beltrami}. Let $u$ be a Beltrami flow defined as \eqref{Beltrami flow} with $\lambda \ne 0$. Then, 
\[
\nabla \times\nabla\times u=\nabla\times(\lambda u)=\lambda\nabla\times u+\nabla \lambda\times u=\lambda^2 u+\nabla \lambda\times u.
\]
Using this and \eqref{Beltrami flow}, (\ref{Double-Beltrami a}) becomes
\eqn \label{eq:2.7}
\left[\lambda^{2}-(\alpha+\beta)\lambda +(1+\alpha\beta)\right]u=\nabla (\alpha-\lambda) \times u.
\een
By taking the inner product of this with $u$, we obtain
\[
\left[\lambda^{2}-(\alpha+\beta)\lambda +(1+\alpha\beta)\right]|u|^{2}=\left[\nabla (\alpha-\lambda) \times u\right]\cdot u=0
\]
from which  $\lambda$ satisfies 
\eqn \label{ch eq of lambda}
\lambda^{2}-(\alpha+\beta)\lambda +(1+\alpha\beta)=0.
\een
Thus, we can find two roots
\eqn \label{pm lambda}
\lambda_1,\lambda_2=\frac{\alpha+\beta\pm\sqrt{(\alpha-\beta)^{2}-4}}{2}.
\een
In addition to \eqref{ch eq of lambda}, the Beltrami flow $u$ with $\lambda$ should satisfy
\eqn \label{parallel condition}
\nabla(\alpha-\lambda)\times u=0
\een
by \eqref{eq:2.7}.

\vspace{1ex}

\noindent
$\blacktriangleright$ We first claim that $\nabla(\alpha-\beta)=0$ in the support of $u$, so the difference $\alpha-\beta$ must be constant. From \eqref{parallel condition}, $B=\alpha u-\omega=(\alpha-\lambda)u$ holds $\nabla\times B=\lambda B$ as well. Then, we substitute $B=(\alpha-\lambda)u$ into \eqref{Double-Beltrami b} to obtain 
\eqn \label{parallel condition 2}
\nabla(\beta-\lambda)\times B=(\alpha-\lambda)\nabla(\beta-\lambda)\times u=0.
\een
Since for $\lambda=\lambda_1$ or $\lambda=\lambda_2$ defined in \eqref{pm lambda}
\[
\alpha-\lambda=\frac{\alpha-\beta\mp\sqrt{(\alpha-\beta)^2-4}}{2}\neq 0,
\]
we deduce from \eqref{parallel condition} and \eqref{parallel condition 2} that
\eqn \label{parallel alpha-beta}
\nabla(\alpha-\beta)\times u=0.
\een
On the other hand, from \eqref{geometric condition} and $\nabla\times B=\lambda B$ we have
\eqn\label{orthogonal alpha-beta}
u\cdot\nabla\alpha=0,\quad u\cdot\nabla\beta=J\cdot\nabla\beta=\lambda B\cdot\nabla\beta=0.
\een
Hence, the claim $\nabla(\alpha-\beta)=0$ in the support of $u$ is followed by \eqref{parallel alpha-beta} and \eqref{orthogonal alpha-beta}.

\vspace{1ex}
\noindent
$\blacktriangleright$ Next, we find the condition of $\alpha$ and $\beta$. By multiplying \eqref{Double-Beltrami a} by $u$ with the vector calculus identity $(\nabla\times A)\cdot B=A\cdot(\nabla\times B)+\nabla\cdot(A\times B)$,
\[
|\nabla\times u|^2+\nabla\cdot((\nabla\times u) \times u)+(1+\alpha\beta)|u|^2=(\alpha+\beta)\nabla\times u\cdot u\leq |\nabla\times u|^2+\frac{(\alpha+\beta)^2}{4}|u|^2.
\]
By integrating both sides over $\Omega$, we arrive at
\[
\int_{\Omega}[(\alpha+\beta)^2-4(1+\alpha\beta)]|u|^2\,dx=\int_{\Omega}[(\alpha-\beta)^2-4]|u|^2\,dx \geq0.
\]
Hence, if a Beltrami flow $u$ satisfies \eqref{eq:HMHD-Beltrami} then $\alpha-\beta$ is constant, and so
\begin{equation} \label{Inequality of ab}
(\alpha+\beta)^2-4(1+\alpha\beta)=(\alpha-\beta)^2-4\geq 0 
\end{equation}
in the support of $u$ which also implies $\lambda_1,\lambda_2$ become real-valued functions by (\ref{pm lambda}) and \eqref{Inequality of ab}.

\vspace{1ex}

Now we are ready to present our result of a classification of double Beltrami states. We define the family of Beltrami flows with a real-valued function $\lambda$ (including the case $\lambda = 0$) by
\[
\mathcal{B}_{\lambda}=\{u\in C^1 : \nabla\times u=\lambda u,\, \dv u=0\}. 
\]
We also define the Minkowski sum of two sets: $X+Y=\{a+b : a\in X,\, b\in Y\}$.

\begin{theorem}[Classification of double Beltrami states]  \label{thm:classification}\upshape
\noindent
\begin{enumerate}[]
\item (1) Let $\lambda_1,\lambda_2$ be real-valued $C^1$ functions where the difference $\lambda_1-\lambda_2$ is constant. Then, for any $u\in \mathcal{B}_{\lambda_1}+\mathcal{B}_{\lambda_2}$ (or $B\in \mathcal{B}_{\lambda_1}+\mathcal{B}_{\lambda_2}$),  $(u,B)=(u,-\nabla\times u+\alpha u)$ (or $(u,B)=(\nabla\times B-\beta B, B)$) satisfies  \eqref{eq:HMHD-Beltrami} with
\begin{equation}\label{alpha beta}
\alpha,\beta=\frac{\lambda_1+\lambda_2\pm\sqrt{(\lambda_1-\lambda_2)^2+4}}{2}.
\end{equation}

\item (2) Let $\alpha, \beta$ be real-valued $C^1$ functions such that the difference $\alpha-\beta$ is constant and $|\alpha-\beta|>2$ so that $\lambda_1,\lambda_2$ defined in \eqref{pm lambda} are two distinct real-valued functions. Then, every double Beltrami state $(u,B)$ of \eqref{eq:HMHD-Beltrami} with $\alpha,\beta$ can be represented by a superposition of two Beltrami flows with $\lambda_1,\lambda_2$. That is, $(u,B) \in \mathcal{B}_{\lambda_1}+\mathcal{B}_{\lambda_2}.$
\vspace{1ex}
\item (3) Let $\alpha,\beta$ be real-valued $C^1$ functions such that the difference $\alpha-\beta$ is constant and $|\alpha-\beta|=2$ so that $\lambda=\lambda_1=\lambda_2=(\alpha+\beta)/2$. Then, every double Beltrami state $(u,B)$ of \eqref{eq:HMHD-Beltrami} with $\alpha,\beta$ in {$\Omega=\mathbb{T}^3$ or $\Omega= \Omega_0$}, belongs to $\mathcal{B}_{\lambda}$. 
\end{enumerate}
\end{theorem}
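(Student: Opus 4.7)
I would write $u = u_1 + u_2$ with $u_i \in \mathcal{B}_{\lambda_i}$ and \emph{define} $B := -\nabla\times u + \alpha u = (\alpha-\lambda_1) u_1 + (\alpha-\lambda_2) u_2$, which makes $B + \omega = \alpha u$ immediate. The key algebraic ingredients are Vieta's formulas $\alpha + \beta = \lambda_1 + \lambda_2$ and $1 + \alpha\beta = \lambda_1\lambda_2$ implied by \eqref{alpha beta}, together with the observation that the joint constancy of $\alpha - \beta$ and $\lambda_1 - \lambda_2$ forces $\nabla \alpha = \nabla \lambda_1 = \nabla \lambda_2$, so every $\alpha - \lambda_i$ is a constant scalar. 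Computing $J = \nabla\times B$ (all correction terms $\nabla(\alpha - \lambda_i) \times u_i$ vanish) and collecting $u_1$- and $u_2$-components, the second identity $u - J = -\beta B$ reduces on each summand to $(\alpha - \lambda_i)(\lambda_i - \beta) = 1$, which is precisely the characteristic equation \eqref{ch eq of lambda}. Divergence-freeness of $B$ reduces to $u \cdot \nabla \alpha = 0$, which follows from $u_i \cdot \nabla \lambda_i = 0$ (a consequence of $\dv(\nabla\times u_i) = \dv(\lambda_i u_i) = 0$).

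\textbf{Part (2).} I would factor the double-curl operator in \eqref{Double-Beltrami a} through the first-order operator $\mathcal{L}_\mu v := \nabla\times v - \mu v$. A direct calculation gives
\[
\mathcal{L}_{\lambda_1} \mathcal{L}_{\lambda_2} u = \nabla\times\nabla\times u - (\lambda_1+\lambda_2) \nabla\times u + \lambda_1 \lambda_2 u - \nabla \lambda_2 \times u,
\]
and since $\alpha - \lambda_2$ is constant we have $\nabla \alpha = \nabla \lambda_2$, so \eqref{Double-Beltrami a} rewrites as $\mathcal{L}_{\lambda_1} \mathcal{L}_{\lambda_2} u = 0$. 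Setting $u_1 := \mathcal{L}_{\lambda_2} u / (\lambda_1 - \lambda_2)$, the curl identity $\nabla\times u_1 = \lambda_1 u_1$ is automatic, while $\dv u_1 = -(u \cdot \nabla \lambda_2)/(\lambda_1-\lambda_2) = 0$ by \eqref{geometric condition}. The complementary piece $u_2 := u - u_1 = -\mathcal{L}_{\lambda_1} u/(\lambda_1 - \lambda_2)$ lies in $\mathcal{B}_{\lambda_2}$ by the same argument (the constancy of $\lambda_1 - \lambda_2$ makes $\mathcal{L}_{\lambda_1}$ and $\mathcal{L}_{\lambda_2}$ commute, so $\mathcal{L}_{\lambda_2}\mathcal{L}_{\lambda_1} u = 0$ as well), and the analogous decomposition of $B$ follows from part (1).

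\textbf{Part (3) and the main obstacle.} When $|\alpha - \beta| = 2$ the two factors coalesce, and \eqref{Double-Beltrami a} reduces (by the same calculation, using that $\alpha - \lambda = (\alpha-\beta)/2$ is constant) to $\mathcal{L}_\lambda^2 u = 0$. Setting $w := \mathcal{L}_\lambda u = \nabla\times u - \lambda u$, one finds $\nabla\times w = \lambda w$ and $\dv w = -u \cdot \nabla \lambda = -u \cdot \nabla \alpha = 0$. To force $w \equiv 0$ I would run the energy identity
\[
\int_\Omega |w|^2 \, dx = \int_\Omega (\nabla\times u)\cdot w \, dx - \int_\Omega \lambda u \cdot w \, dx = \int_\Omega u \cdot \nabla\times w \, dx - \int_\Omega \lambda u \cdot w \, dx = 0,
\]
where the middle equality integrates $\dv(u \times w) = (\nabla\times u)\cdot w - u \cdot (\nabla\times w)$ and the boundary term $\int_{\partial \Omega} (u\times w) \cdot n\, dS$ vanishes either by periodicity on $\mathbb{T}^3$ or by the zero boundary condition built into $L^{2}_{\text{curl}}(\Omega_0)$. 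The identity $B = (\alpha - \lambda) u$ then gives $B \in \mathcal{B}_\lambda$ automatically. The main obstacle is exactly this final energy step: in the degenerate case the operator $\mathcal{L}_\lambda^2$ admits a nontrivial ``Jordan block'' direction globally, and only the integral geometry of a bounded or periodic domain kills it, which is precisely why (3) cannot extend to $\mathbb{R}^3$ without further decay assumptions.
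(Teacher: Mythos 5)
Your proposal is correct and follows essentially the same route as the paper: part (1) by checking the identities on each Beltrami summand and using linearity, part (2) by factoring the double-curl equation into the commuting first-order operators $\nabla\times{}-\lambda_1$ and $\nabla\times{}-\lambda_2$ (your $u_1,u_2$ are exactly the paper's $\pm(\nabla\times u-\lambda_{2,1}u)/(\lambda_1-\lambda_2)$), and part (3) by the $L^2$ energy identity forcing $\nabla\times u-\lambda u=0$ on a bounded or periodic domain. Your write-up is somewhat more explicit than the paper's (e.g.\ verifying $(\alpha-\lambda_i)(\lambda_i-\beta)=1$, the divergence-freeness of the pieces, and the vanishing of the boundary term), but there is no substantive difference in method.
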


\begin{remark} \label{remark 2.3}\upshape
\noindent
\begin{enumerate}[]
\item \textbullet  \ It remains unclear whether a double Beltrami state of \eqref{eq:HMHD-Beltrami} with non-constant $\alpha-\beta$ exists. Hence,  a double Beltrami state may exist which cannot be represented by a superposition of two Beltrami flows.

\vspace{1ex}
\item \textbullet  \ Theorem \ref{thm:classification} (1) and (2) are domain-independent results. However, in the proof of Theorem \ref{thm:classification} (3), the $L^2$ integrability of $(u,B)$ is crucial, and this is  why we choose $\Omega=\mathbb{T}^3$ or $\Omega=\Omega_0$. By doing so, $(u,B)$ is in {$L^{2}(\Omega)$} because $(u,B)$ are $C^{1,\delta}$ functions in {a bounded domain $\Omega$}. In contrast, the $L^2$ integrability of double Beltrami states in $\mathbb{R}^3$ is not clear and we expect that there is no double Beltrami states in $L^2(\mathbb{R}^3)$. In fact, we can find a double Beltrami state $(u,B)\not\in\mathcal{B}_{\lambda}$ in $\mathbb{R}^3$. For example, 
\[
u_1(x)=
\begin{pmatrix}
Ax_{3}\sin{\lambda_0 x_{3}}+Cx_{2}\cos{\lambda_0 x_{2}}\\
Bx_{1}\sin{\lambda_0 x_{1}}+Ax_{3}\cos{\lambda_0 x_{3}}\\
Cx_{2}\sin{\lambda_0 x_{2}}+Bx_{1}\cos{\lambda_0 x_{1}}
\end{pmatrix},\ \lambda_0\in\mathbb{R},\quad 
u_2(x)=\begin{pmatrix}
x_3\sin{\frac{x_3^2}{2}} \\ x_3\cos{\frac{x_3^2}{2}} \\ 0 
\end{pmatrix}.
\]
Then, $\nabla\times u_1-\lambda_0 u_1\in \mathcal{B}_{\lambda_0}\setminus\{0\}$, $\nabla\times u_2-x_3 u_2\in\mathcal{B}_{x_3}\setminus\{0\}$ and $(u_i,\pm u_i-(\nabla\times u_i-\lambda u_i))$ satisfies \eqref{eq:HMHD-Beltrami} with $(\alpha,\beta)=(\lambda\pm1,\lambda\mp1)$.

\vspace{1ex}
\item \textbullet  \ Even considering constant $\lambda_1,\lambda_2$, the set of superpositions of two Beltrami flows $\mathcal{B}_{\lambda_1}+\mathcal{B}_{\lambda_2}$ involves a more family of divergence-free vector fields than the set of Beltrami flows $\mathcal{B}_{\lambda}$. For example, we can find a vector field $u=u_1+u_2$ in $\mathcal{B}_{\lambda_1}+\mathcal{B}_{\lambda_2}$ where $u_1$ and $u_2$ are periodic with different periods (like a superposition of ABC flows described in \eqref{ABC} with different $\lambda$). More interestingly, $\mathcal{B}_{\lambda_1}+\mathcal{B}_{\lambda_2}$ involves divergence-free vector fields on each energy shell in Fourier space. Let 
\[
u(x)=\sum_{|k|^2=\lambda^2}u_ke^{ix\cdot k},\quad  x\in\mathbb{T}^3, \ k\in\mathbb{Z}^3
\]
for arbitrary $u_k\in\mathbb{C}^3$ satisfying $\overline{u_k}=u_{-k}$ and $k\cdot u_k=0$ and $\lambda\in \{\zeta\in\mathbb{R}_{+}:\zeta^2=|k|^2 \ \text{for}\ k\in\mathbb{Z}^3\}$. Then, by the Beltrami decomposition developed in \cite{Constantin Majda}, $u\in \mathcal{B}_{\lambda}+\mathcal{B}_{-\lambda}$, which means that every divergence-free vector field obtained by Fourier modes on $\{k\in\mathbb{Z}^3:|k|^2=\lambda^2\}$ can be represented by a superposition of two Beltrami flows. Or equivalently, $u$ solves the divergence-free Helmholtz equation:
\[
-\Delta u=\lambda^2 u,\quad\dv u=0.
\]
The examples include $\sin{(\lambda x_{i})}e_{j}$, $\cos{(\lambda x_{i})}e_{j}$ for $i\neq j$, and
\[
\begin{pmatrix}
\kappa_{1}\sin{(n_{1}x_{1})}\cos{(n_{2}x_{2})}\cos{(n_{3}x_{3})}\\
\kappa_{2}\cos{(n_{1}x_{1})}\sin{(n_{2}x_{2})}\cos{(n_{3}x_{3})}\\
\kappa_{3}\cos{(n_{1}x_{1})}\cos{(n_{2}x_{2})}\sin{(n_{3}x_{3})}
\end{pmatrix},
\] 
where 
\[
\lambda^2=n^{2}_{1}+n^{2}_{2}+n^{2}_{3}, \quad \kappa_{1}n_{1}+\kappa_{2}n_{2}+\kappa_{3}n_{3}=0
\]
for $\kappa_i\in\mathbb{R}$, $n_i\in\mathbb{Z}$.
\vspace{1ex}
\item \textbullet  \ A simple example of $\mathcal{B}_{\lambda_1}+\mathcal{B}_{\lambda_2}$ with non-constant $\lambda_1,\lambda_2$, but constant $\lambda_1-\lambda_2$ is
\[
u(x)=u_1(x)+u_2(x)=\begin{pmatrix}
\sin{\frac{x_3^2}{2}} \\ \cos{\frac{x_3^2}{2}} \\ 0 
\end{pmatrix}
+
\begin{pmatrix}
\sin{\frac{(x_3+1)^2}{2}} \\ \cos{\frac{(x_3+1)^2}{2}} \\ 0 
\end{pmatrix},
\]
where $\nabla\times u_1(x)= x_3 u_1(x)$ and $\nabla\times u_2(x)=(x_3+1)u_2(x)$.
\end{enumerate}
\end{remark}

\begin{proof}[\bf Proof of Theorem \ref{thm:classification}]
(1) For $u\in\mathcal{B}_{\lambda_1}+\mathcal{B}_{\lambda_2}$, let $u=u_1+u_2$ such that $u_1\in\mathcal{B}_{\lambda_1}$ and $u_2\in\mathcal{B}_{\lambda_2}$. Then, $(u_1,(\alpha-\lambda_1)u_1)$ and $(u_2,(\alpha-\lambda_2)u_2)$ satisfy \eqref{eq:HMHD-Beltrami} with $\alpha,\beta$ given by \eqref{alpha beta}. So the proof follows by the linearity of \eqref{eq:HMHD-Beltrami}.

\vspace{1ex}

\noindent
(2) Here, we only consider $u$ because $B$ can be treated by the same process. When $\alpha-\beta$ is constant, $\alpha-\lambda_1$ and $\alpha-\lambda_2$ are constants as well for $\lambda_1,\lambda_2$ in \eqref{pm lambda}. Thus, \eqref{Double-Beltrami a} reduces to 
\[
(\nabla\times -\lambda_1)(\nabla\times -\lambda_2)u=(\nabla\times -\lambda_2)(\nabla\times -\lambda_1)u=0,\quad \dv u=0.
\]
Then, $u$  satisfies 
\[
\nabla\times u-\lambda_2 u\in\mathcal{B}_{\lambda_1},\quad
\nabla\times u-\lambda_1 u\in\mathcal{B}_{\lambda_2}.
\]
Since $|\lambda_1-\lambda_2|=\sqrt{(\alpha-\beta)^{2}-4}$ is non-zero constant, we deduce 
\[
u= \frac{1}{\lambda_1-\lambda_2}(\nabla\times u-\lambda_2 u)-\frac{1}{\lambda_1-\lambda_2}(\nabla\times u-\lambda_1 u)\in\mathcal{B}_{\lambda_1}+\mathcal{B}_{\lambda_2}.
\]
\noindent
(3) As in (2), we only consider \eqref{Double-Beltrami a} corresponding to $\lambda=\lambda_1=\lambda_2$:
\begin{equation}\label{repeated eq}
(\nabla\times -\lambda)^2u=0,\quad \dv u=0.
\end{equation}
Then by taking $L^2$ inner product of \eqref{repeated eq} with $u$ in $\Omega$ we obtain
\[
\int_{\Omega}\left|\nabla\times u-\lambda u\right|^2\,dx = 0,
\]
which implies that $\nabla\times u=\lambda u$.
This completes the proof of Theorem \ref{thm:classification}.
\end{proof}

%%%%%%%%%%%
\subsubsection{\bf Variational approach to double Beltrami states}
%%%%%%%%%%%% 

Now we apply Woltjer-type variational principle to find double Beltrami states. To this end, we introduce one more helicity  along with the magnetic helicity $\mathcal{H}_B$. From  (\ref{Ideal Hall MHD}), we derive the equation of the magneto-vorticity $B+\omega$:
\begin{equation}\label{B+omega eq}
(B+\omega)_{t}+\nabla\times((B+\omega)\times u)=0.
\end{equation}
The magneto-vorticity $B+\omega$ is already used in \cite{Bae Kang Shin, Chae Weng, Chae Wolf, Chae Wolf 1, Chae Wolf 2, Jeong Oh} analytically and in \cite{Meyrand, Mininni} numerically to study (\ref{H MHD}). (\ref{B+omega eq}) also yields a conserved quantity called the magneto-vorticity helicity $\mathcal{H}_{B+\omega}$  \cite{Polygiannakis Moussas, Turner}:
\[
\mathcal{H}_{B+\omega}(t)=\int_{\Omega} \left[(A+u)\cdot(B+\omega)\right](t,x)\,dx=\int_{\Omega} \left[(A+u)\cdot \nabla \times (A+u)\right](t,x)\,dx.
\]

We aim to show that an energy minimizer conserving two helicities $\mathcal{H}_{B}$ and $\mathcal{H}_{B+\omega}$ has the form of \eqref{eq:HMHD-Beltrami} with constant $\alpha,\beta$ via the variational method \cite{Abe}, \cite[Chapter 8]{Evans}. To do so, let $\Omega=\mathbb{T}^{3}$ or $\Omega=\Omega_{0}$ and we follow the definition of $L^2_{\curl}(\Omega)$ in Section \ref{sec:1.2}, and define
\[
\mathcal{A}^{h_1,h_2}(\Omega)=\left\{(B,\omega)\in L^2_{\text{curl}}(\Omega): \mathcal{H}_{B}=h_1,\  \mathcal{H}_{B+\omega}=h_2\right\}, 
\]
where $h_{1}$ and $h_{2}$ are constants.   We note that it is not clear whether there exists an energy minimizer in $\mathcal{A}^{h_1,h_2}(\Omega)$. So, we first derive (\ref{eq:HMHD-Beltrami}) under the assumption that there is a minimizer in $\mathcal{A}^{h_1,h_2}(\Omega)$.

\begin{proposition}\label{Prop:2.1} \upshape 
For $h_1,h_2\in \mathbb{R}\setminus\{0\}$, if there exists a minimizer $(B,\omega)$ of 
\[
\left\{\int_{\Omega} \left(|u(x)|^2+|B(x)|^2\right)\,dx: (B,\omega)\in\mathcal{A}^{h_1,h_2}(\Omega)\right\}
\]
over $\mathcal{A}^{h_1,h_2}(\Omega)$, then the minimizer $(B,\omega)=(B,\nabla\times u)$ satisfies
\eqn \label{u B Lagrangian}
u=\lambda_2(B+\omega),\quad u-J=-\lambda_1 B
\een
for some Lagrange multipliers $\lambda_1,\lambda_2\in\mathbb{R}$.
\end{proposition}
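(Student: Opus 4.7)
The plan is to apply the Lagrange multiplier theorem of constrained calculus of variations to the assumed minimizer $(B,\omega)$. I would first reformulate using divergence-free potentials, writing $B=\nabla\times A$ and $\omega=\nabla\times u$ with $\dv A=\dv u=0$; on $\Omega=\mathbb{T}^{3}$ this uses the zero-mean gauge, while on $\Omega=\Omega_{0}$ it is standard given the definition of $L^{2}_{\text{curl}}(\Omega)$. Then $\mathcal{A}^{h_1,h_2}(\Omega)$ is cut out of a Hilbert space of divergence-free pairs $(A,u)$ by two smooth quadratic functionals, and the energy is itself smooth and quadratic.

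Next I would compute the first variations at $(A,u)$ along divergence-free perturbations $(\tilde A,\tilde u)$, writing $\tilde B=\nabla\times\tilde A$ and integrating by parts (the boundary contributions vanish in both admissible domains by the definition of $L^{2}_{\text{curl}}$). This yields
\[
\delta E = 2\int_\Omega u\cdot\tilde u\,dx+2\int_\Omega\tilde A\cdot(\nabla\times B)\,dx,\qquad \delta\mathcal{H}_B=2\int_\Omega\tilde A\cdot B\,dx,
\]
\[
\delta\mathcal{H}_{B+\omega}=2\int_\Omega(\tilde A+\tilde u)\cdot(B+\omega)\,dx.
\]
Lagrange's theorem then supplies constants $\lambda_1,\lambda_2\in\mathbb{R}$ with $\delta E=2\lambda_1\delta\mathcal{H}_B+2\lambda_2\delta\mathcal{H}_{B+\omega}$ for every admissible $(\tilde A,\tilde u)$, i.e.\
\[
\int_\Omega\tilde u\cdot\bigl[u-\lambda_2(B+\omega)\bigr]\,dx+\int_\Omega\tilde A\cdot\bigl[\nabla\times B-\lambda_1 B-\lambda_2(B+\omega)\bigr]\,dx=0.
\]

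Specializing $\tilde A\equiv 0$ and letting $\tilde u$ range over all divergence-free test fields, the Helmholtz decomposition gives $u-\lambda_2(B+\omega)=\nabla\psi$ for some scalar $\psi$; the left-hand side has vanishing divergence, so $\Delta\psi=0$, and on $\mathbb{T}^3$ or $\Omega_0$ the natural boundary conditions force $\psi$ to be constant, yielding the first identity $u=\lambda_2(B+\omega)$. Specializing instead $\tilde u\equiv 0$ and running the same argument produces $\nabla\times B-\lambda_1 B-\lambda_2(B+\omega)=\nabla\phi$ with $\phi$ constant for the same reason; substituting the first identity eliminates the $\lambda_2$-term and gives $\nabla\times B-\lambda_1 B-u=0$, which is the second identity $u-J=-\lambda_1 B$.

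The main delicate point will be justifying the Lagrange multiplier step itself, namely the constraint qualification that the Fréchet derivatives of $\mathcal{H}_B$ and $\mathcal{H}_{B+\omega}$ are linearly independent at the minimizer; this is where the hypothesis $h_1,h_2\neq 0$ is used, guaranteeing the constraints are nontrivial and genuinely two-dimensional. A secondary technical point is the handling of boundary terms in the integrations by parts on $\Omega_0$, which is standard under the definition of $L^{2}_{\text{curl}}(\Omega)$ and the chosen gauge for the potentials.
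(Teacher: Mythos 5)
Your proposal is correct and follows essentially the same route as the paper: a two-constraint Lagrange multiplier argument whose first variations and resulting weak Euler--Lagrange identity coincide with the paper's, with the same use of $h_1,h_2\neq 0$ to guarantee nondegeneracy of the constraints. The paper simply implements the multiplier rule by hand — via the implicit function theorem applied to explicitly chosen variation directions $(B^1,\omega^1)$ and $(B^2,\omega^2)=(B^1,B^1+2\omega^1)$ making the $2\times 2$ Jacobian $4\delta^1\delta^2\neq 0$ — which is exactly the constraint qualification you defer to the end, and it stops at the weak formulation where you add the (routine) Helmholtz/harmonic-function localization.
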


\begin{proof}
Since $(B,\omega)\in\mathcal{A}^{h_1,h_2}(\Omega)$ with $h_1,h_2\neq0$, there exists $(B^1,\omega^1)\in L^2_{\curl}(\Omega)$ such that
\[
\int_{\Omega}A\cdot B^1\,dx=:\delta^1\neq0,\quad \int_{\Omega}(A+u)\cdot(B^1+\omega^1)\,dx=:\delta^2\neq0.
\]
Then, we set $(B^2,\omega^2)=(B^1, B^1+2\omega^1)$ to obtain
\[
\int_{\Omega} A\cdot B^2\,dx=\delta^1,\quad \int_{\Omega}(A+u)\cdot(B^2+\omega^2)\,dx=2\delta^2.
\] 
We take arbitrary $(\widetilde{B}, \widetilde{\omega})\in L^2_{\text{curl}}(\Omega)$ and {define}
\[\begin{aligned}
j_1(\tau,s_1,s_2) &=\int_{\Omega}(A+\tau\widetilde{A}+s_1A^1+s_2A^2)\cdot(B+\tau\widetilde{B}+s_1 B^1+s_2B^2)\,dx, \\
j_2(\tau,s_1,s_2) &=\int_{\Omega}\left[(A+u)+\tau(\widetilde{A}+\widetilde{u})+s_1 (A^1+u^1)+s_2(A^2+ u^2)\right]\\
&\qquad \cdot\left[(B+\omega)+\tau(\widetilde{B}+\widetilde{\omega})+s_1 (B^1+\omega^1)+s_2 (B^2+\omega^2)\right]\,dx.
\end{aligned}\]
Then, we have $j_1(0,0,0)=h_1$, $j_2(0,0,0)=h_2$ and
\[
\begin{aligned}
&\frac{\partial j_1}{\partial\tau}(0,0,0)=2\int_{\Omega} A\cdot\widetilde{B}\,dx,\quad &&\frac{\partial j_2}{\partial\tau}(0,0,0)=2\int_{\Omega} (A+u)\cdot(\widetilde{B}+\widetilde{\omega})\,dx, \\
&\frac{\partial j_1}{\partial s_1}(0,0,0)=2 \int_{\Omega} A\cdot B^1\,dx=2\delta^1,\quad &&\frac{\partial j_2}{\partial s_1}(0,0,0)=2\int_{\Omega} (A+u)\cdot (B^1+\omega^1)\,dx=2\delta^2, \\
&\frac{\partial j_1}{\partial s_2}(0,0,0)=2 \int_{\Omega} A\cdot B^2\,dx=2\delta^1,\quad &&\frac{\partial j_2}{\partial s_2}(0,0,0)=2\int_{\Omega} (A+u)\cdot (B^2+\omega^2)\,dx=4\delta^2.
\end{aligned}
\]
Moreover, we find that  
\[
\det{\begin{pmatrix}
\partial_{s_1} j_1 & \partial_{s_2} j_1 \\ \partial_{s_1} j_2 & \partial_{s_2} j_2
\end{pmatrix}(0,0,0)}
=
\det{\begin{pmatrix} 2\delta^{1} & 2\delta^{2} \\ 2\delta^{1} & 4\delta^{2} \end{pmatrix} }
= 4\delta^1\delta^2\neq0.
\]
Hence, we deduce from the implicit function theorem that there exist $C^1$ functions $s_1(\tau)$, $s_2(\tau)$ such that 
\[
j_1(\tau,s_1(\tau),s_2(\tau))=h_1, \quad j_2(\tau,s_1(\tau),s_2(\tau))=h_2
\]
 for sufficiently small $\tau$, so that 
 \[
 (B+\tau\widetilde{B}+s_1(\tau) B^1+s_2(\tau) B^2, \omega+\tau\widetilde{\omega}+s_1(\tau)\omega^1+s_2(\tau)\omega^2)\in \mathcal{A}^{h_1,h_2}(\Omega).
 \]
 We also estimate
\[
\begin{pmatrix}
\partial_\tau j_1 \\ \partial_\tau j_2
\end{pmatrix} (0,0,0)
=
-\begin{pmatrix}
\partial_{s_1} j_1 & \partial_{s_2} j_1 \\ \partial_{s_1} j_2 & \partial_{s_2} j_2
\end{pmatrix}(0,0,0)
\begin{pmatrix}
s'_1(0) \\ s'_2(0)
\end{pmatrix}=
-2\begin{pmatrix}
\delta^{1} & \delta^{2} \\ \delta^{1} & 2\delta^{2}
\end{pmatrix}
\begin{pmatrix}
s'_1(0) \\ s'_2(0)
\end{pmatrix}
\]
to get
\[
\begin{split}
s_1'(0)&=-\frac{2}{\delta^1}\int_{\Omega} A\cdot\widetilde{B}\,dx+\frac{1}{\delta^1}\int_{\Omega}(A+u)\cdot(\widetilde{B}+\widetilde{\omega})\,dx,\\
s_2'(0)&=\frac{1}{\delta^2}\int_{\Omega} A\cdot\widetilde{B}\,dx-\frac{1}{\delta^2}\int_{\Omega}(A+u)\cdot(\widetilde{B}+\widetilde{\omega})\,dx.
\end{split}
\]
Now, we define
\[
i(\tau)=\int_{\Omega} \left(|u+\tau\widetilde{u}+s_1(\tau)u^1+s_2(\tau)u^2|^2+|B+\tau\widetilde{B}+s_1(\tau)B^1+s_2(\tau)B^2|^2\right)\,dx.
\]
Then, since $(B,\omega)\in\mathcal{A}^{h_1,h_2}(\Omega)$ is an energy minimizer, we get
\[
i'(0)=2\int_{\Omega} u\cdot(\widetilde{u}+s_1'(0)u^1+s_2'(0)u^2)\,dx +2\int_{\Omega} B\cdot(\widetilde{B}+s_1'(0)B^1+s_2'(0)B^2)\,dx=0.
\]
From this, we obtain 
\[\begin{aligned}
0=\frac{i'(0)}{2} &=\int_{\Omega} u\cdot\widetilde{u}\,dx-\lambda_{2}\int_{\Omega} (B+\omega)\cdot(\widetilde{A}+\widetilde{u})\,dx +\int_{\Omega} J\cdot\widetilde{A}\,dx-\lambda_{1}\int_{\Omega} B\cdot\widetilde{A}\,dx \\
&= \int_{\Omega} \left[u-\lambda_2(B+\omega)\right]\cdot \widetilde{u}\,dx+\int_{\Omega} \left[-\lambda_2(B+\omega)+J-\lambda_1 B\right]\cdot \widetilde{A}\,dx,
\end{aligned}\]
where
\[
\begin{aligned}
\lambda_1 &=\frac{2}{\delta^1}\left(\int_{\Omega}u\cdot u^1\,dx+\int_{\Omega}B\cdot B^1\,dx\right)-\frac{1}{\delta^2}\left(\int_{\Omega}u\cdot u^2\,dx+\int_{\Omega}B\cdot B^2\,dx\right), \\
\lambda_2 &=-\frac{1}{\delta^1}\left(\int_{\Omega}u\cdot u^1\,dx+\int_{\Omega}B\cdot B^1\,dx\right)+\frac{1}{\delta^2}\left(\int_{\Omega}u\cdot u^2\,dx+\int_{\Omega}B\cdot B^2\,dx\right).
\end{aligned}
\]
This completes the proof of Proposition \ref{Prop:2.1}.
\end{proof}

By setting $\alpha =1/\lambda_{2}$ and $\beta=\lambda_{1}$, (\ref{u B Lagrangian}) becomes (\ref{eq:HMHD-Beltrami}).  As mentioned above, we do not know whether there exists an energy minimizer in $\mathcal{A}^{h_1,h_2}$ due to the lack of the compactness of $u$. Instead, we show the existence of {a minimizer} with a prescribed $\omega\in L^2_{\curl}(\Omega)$.

\begin{proposition} \label{Prop:2.2}\upshape
For each $\omega=\nabla\times u\in L^2_{\curl}(\Omega)$ and $h_1, h_2\in\mathbb{R}\setminus\{0\}$, define
\[
\mathcal{A}_{\omega}^{h_1,h_2}(\Omega)=\left\{B\in L^2_{\text{curl}}(\Omega):  \mathcal{H}_{B}=h_1, \  \mathcal{H}_{B+\omega}=h_2\right\}.
\]
Then, there exists a minimizer of $\mathcal{E}_{B}$ over $\mathcal{A}_{\omega}^{h_1,h_2}$. Moreover, the minimizer $B$ satisfies 
\[
\nabla\times B-\lambda_1 B-\lambda_2(B+\omega)=0,
\]
for some $\lambda_1,\lambda_2\in\mathbb{R}$.
\end{proposition}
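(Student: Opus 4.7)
The plan is to argue existence by the direct method of the calculus of variations, then derive the Euler--Lagrange equation by an implicit function theorem scheme adapted from Proposition \ref{Prop:2.1} to the setting where $\omega$ is prescribed and only $B$ is perturbed.

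For existence, I take a minimizing sequence $\{B_n\} \subset \mathcal{A}_\omega^{h_1,h_2}(\Omega)$. Since $\mathcal{E}_B = \|B_n\|_{L^2}^2$ is bounded, a (non-relabeled) subsequence converges weakly in $L^2(\Omega)$ to some divergence-free $B$. Choosing vector potentials $A_n = \curl^{-1} B_n$ in Coulomb gauge, elliptic regularity on the bounded domain $\Omega$ yields $\|A_n\|_{H^1} \lesssim \|B_n\|_{L^2}$, and Rellich's theorem provides strong $L^2$ convergence $A_n \to A$ along a further subsequence. Combined with the weak convergence of $B_n$ and the fact that $u,\omega$ are fixed, both helicity constraints pass to the limit,
\[
\int_\Omega A_n \cdot B_n \, dx \to \int_\Omega A \cdot B \, dx, \qquad \int_\Omega (A_n + u)\cdot(B_n+\omega)\,dx \to \int_\Omega (A+u)\cdot(B+\omega)\,dx,
\]
so $B \in \mathcal{A}_\omega^{h_1,h_2}(\Omega)$. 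Weak lower semicontinuity of $\|\cdot\|_{L^2}$ then gives $\mathcal{E}_B \le \liminf_n \mathcal{E}_{B_n}$, making $B$ a minimizer.

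For the Euler--Lagrange equation, fix arbitrary $\tilde{B} \in L^2_\curl(\Omega)$ and choose $B^1, B^2 \in L^2_\curl(\Omega)$ making the matrix
\[
M = 2\begin{pmatrix} \int_\Omega A \cdot B^1 \, dx & \int_\Omega A \cdot B^2 \, dx \\ \int_\Omega (A+u) \cdot B^1 \, dx & \int_\Omega (A+u) \cdot B^2 \, dx \end{pmatrix}
\]
invertible; since $\det M = 4\bigl(\int A\cdot B^1 \int u\cdot B^2 - \int A\cdot B^2 \int u\cdot B^1\bigr)$, such $B^1,B^2$ are readily available (for example one taken close to $B$ to register the $A$-pairing and one chosen so that $\int u \cdot B^2 \neq 0$). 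The implicit function theorem then produces $C^1$ functions $s_1(\tau),s_2(\tau)$ such that $B + \tau \tilde{B} + s_1 B^1 + s_2 B^2 \in \mathcal{A}_\omega^{h_1,h_2}(\Omega)$ for small $\tau$. Setting $i(\tau) = \mathcal{E}_{B+\tau\tilde{B}+s_1(\tau) B^1+s_2(\tau) B^2}$, the minimality condition $i'(0)=0$ together with the computed $s_i'(0)$ yields real constants $\mu_1,\mu_2$ with
\[
\int_\Omega \bigl[B - \mu_1 A - \mu_2 (A+u)\bigr] \cdot \tilde{B} \, dx = 0 \quad \text{for all } \tilde{B} \in L^2_\curl(\Omega).
\]
Since $\tilde{B}$ is an arbitrary divergence-free $L^2$ field, the Leray decomposition forces $B - \mu_1 A - \mu_2(A+u)$ to be a gradient; taking $\nabla\times$ kills the gradient and delivers $\nabla \times B - \mu_1 B - \mu_2(B+\omega) = 0$, which is the claimed identity with $\lambda_i = \mu_i$.

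The main obstacle is the passage of both helicity constraints to the weak limit, which requires the lift $B \mapsto A = \curl^{-1} B$ to be compact from $L^2$ into $L^2$; this is precisely where boundedness of $\Omega$ and the Rellich embedding $H^1 \hookrightarrow L^2$ are indispensable, and it is consistent with the restriction to $\mathbb{T}^3$ or $\Omega_0$. The subsequent Lagrange multiplier derivation is otherwise a direct adaptation of the argument in Proposition \ref{Prop:2.1}, the only new ingredient being the verification that suitable $B^1, B^2$ exist in the absence of a second free field $\omega$ to perturb.
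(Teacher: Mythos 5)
Your existence argument and the overall Lagrange-multiplier scheme coincide with the paper's proof (weak $L^2$ convergence of $B_j$, strong convergence of the potentials $A_j$ via Rellich, passage of both helicities to the limit, lower semicontinuity, then an implicit-function-theorem variation). However, there is a genuine gap exactly at the point you flag as "the only new ingredient": the claim that $B^1,B^2$ with $\det M\neq 0$ are "readily available" is false in a degenerate case. Your own computation shows
\[
\tfrac{1}{4}\det M=\int_\Omega A\cdot B^1\,dx\int_\Omega u\cdot B^2\,dx-\int_\Omega A\cdot B^2\,dx\int_\Omega u\cdot B^1\,dx,
\]
and if $u=cA$ in the weak $L^2_{\curl}$ sense (i.e.\ $\int_\Omega u\cdot\widetilde{B}\,dx=c\int_\Omega A\cdot\widetilde{B}\,dx$ for all $\widetilde{B}\in L^2_{\curl}(\Omega)$), then $\int u\cdot B^i=c\int A\cdot B^i$ for $i=1,2$ and the determinant vanishes identically, for \emph{every} choice of $B^1,B^2$. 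The two constraints are then not independent and the implicit function theorem cannot be invoked, so your argument does not produce the Euler--Lagrange equation in this case.

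The paper resolves this by a separate case analysis. When $u=cA$ weakly, one checks that $\mathcal{H}_{B+\omega}=(1+c)^2\mathcal{H}_B$ is forced, so the second constraint is redundant and the minimizer of $\mathcal{E}_B$ over $\mathcal{A}^{h_1,h_2}_{\omega}(\Omega)$ is a minimizer over the single-constraint class $\mathcal{A}^{h_1}(\Omega)$ of \eqref{force-free admissible}; Woltjer's variational principle then gives $\nabla\times B=\lambda B$, which is the asserted identity with $\lambda_1=\lambda$, $\lambda_2=0$. Only in the complementary case ($u\neq cA$ for all $c$) does one get the nondegenerate pair $B^1,B^2$ and run the two-multiplier argument as you describe. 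You should add this dichotomy; the rest of your proof, including the final step of testing against arbitrary $\widetilde{B}\in L^2_{\curl}(\Omega)$ and removing the gradient part by taking the curl, is sound and equivalent (up to one integration by parts) to the paper's formulation $\int_\Omega[\nabla\times B-\lambda_1B-\lambda_2(B+\omega)]\cdot\widetilde{A}\,dx=0$.
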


\begin{proof}
Let $\{B_j\}\subset L^2_{\text{curl}}(\Omega)$ be an energy minimizing sequence in $\mathcal{A}_{\omega}^{h_1,h_2}(\Omega)$: 
\[
\int_{\Omega}|B_j(x)|^2\,dx\longrightarrow m_B:=\inf_{B\in\mathcal{A}_{\omega}^{h_1,h_2}(\Omega)}\int_{\Omega}|B(x)|^2\,dx
\] 
as $j\rightarrow\infty$. Then, the Rellich–Kondrachov compactness theorem {\cite[Page 272]{Evans}} implies that there exists a subsequence (still denoted by $\{B_j\}$) such that
\[
B_j\rightharpoonup B \text{ in $L^2(\Omega)$},\quad A_j\rightarrow A \text{ in $L^2(\Omega)$}.
\]
Using these, we now show that the limit $B\in \mathcal{A}_{\omega}^{h_1,h_2}(\Omega)$. We first note  
\[
\left|\int_{\Omega} A\cdot B\,dx-h_1\right|=\left|\int_{\Omega} A\cdot B\,dx- \int_{\Omega} A_j\cdot B_j\,dx\right|=\left|\int_{\Omega} (A-A_j)\cdot B\,dx+\int_{\Omega} A_j\cdot(B-B_j)\,dx \right|\longrightarrow 0,
\]
and 
\[
\begin{aligned}
\left|\int_{\Omega} (A+u)\cdot\nabla\times(A+u)\,dx-h_2\right| &=\left|\int_{\Omega} (A+u)\cdot\nabla\times(A+u)\,dx-\int_{\Omega} (A_j+u)\cdot\nabla\times(A_j+u)\,dx\right| \\
&=\left|\int_{\Omega} (A-A_j)\cdot\nabla\times(A+u)\,dx+\int_{\Omega} (A_j+u)\cdot\nabla\times(A-A_j)\,dx\right|\longrightarrow 0
\end{aligned}\]
as $j\rightarrow\infty$. Moreover, by the definition of $m_B$ and {the} lower semi-continuity of weak limits, we have
\[
m_B\leq \int_{\Omega} |B(x)|^2\,dx\leq \liminf_{j\rightarrow\infty}\int_{\Omega}|B_j(x)|^2\,dx=m_{B}.
\]
Hence, the limit $B$ is an energy minimizer in $\mathcal{A}^{h_1,h_2}_{\omega}(\Omega)$.
We now show that the energy minimizer $B$ in $\mathcal{A}^{h_1,h_2}_{\omega}(\Omega)$ satisfies $\nabla\times B-\lambda_1 B-\lambda_2(B+\omega)=0$. 

\vspace{1ex}

\noindent
\underline{Case 1}. When $u=cA$ for some constant $c\in\mathbb{R}$ in the weak $L^2$ sense, $h_2$ must be $(1+c)^2 h_1$. Hence, the minimizer $B$ is reduced to an energy minimizer in $\mathcal{A}^{h_1}(\Omega)$ defined {in} \eqref{force-free admissible}, which implies $\nabla\times B-\lambda B=0$ for some constant $\lambda$. (See Section \ref{sec:1.2}.)

\vspace{1ex}

\noindent
\underline{Case 2}. When $u\neq cA$ for all $c\in\mathbb{R}$ in the weak $L^2$ sense, there exist $B^1, B^2\in L^2_{\curl}(\Omega)$ such that
\[
\int_{\Omega}A\cdot B^1\,dx\int_{\Omega}u\cdot B^2\,dx-\int_{\Omega}u\cdot B^1\,dx\int_{\Omega}A\cdot B^2\,dx=:\delta\neq0
\]
{from} which {we} arrives at
\[
\delta^{i1}:=\int_{\Omega} A\cdot B^i\,dx,\quad \delta^{i2}:=\int_{\Omega}(A+u)\cdot B^i\,dx,\quad \delta^{11}\delta^{22}-\delta^{12}\delta^{21}=\delta\neq0.
\] 
The remaining process is similar to the proof of Proposition \ref{Prop:2.1}. We take arbitrary $\widetilde{B}\in L^2_{\text{curl}}(\Omega)$ and set 
\[\begin{aligned}
j_1(\tau,s_1,s_2) &=\int_{\Omega}(A+\tau\widetilde{A}+s_1A^1+s_2A^2)\cdot(B+\tau\widetilde{B}+s_1 B^1+s_2B^2)\,dx,\\
j_2(\tau,s_1,s_2) &=\int_{\Omega}\left[(A+u)+\tau\widetilde{A}+s_1 A^1+s_2 A^2\right] \cdot\left[(B+\omega)+\tau \widetilde{B}+s_1 B^1+s_2 B^2\right]\,dx.
\end{aligned}
\]
Then, we deduce that there exist $C^1$ functions $s_1(\tau)$, $s_2(\tau)$ such that 
\[
j_1(\tau,s_1(\tau),s_2(\tau))=h_1, \quad j_2(\tau,s_1(\tau),s_2(\tau))=h_2
\]
{for sufficiently small $\tau$.} Then, we define
\[
i(\tau)=\int_{\Omega} |B+\tau\widetilde{B}+s_1(\tau)B^1+s_2(\tau)B^2|^2\,dx,
\]
which implies  
\[\begin{aligned}
0=\frac{i'(0)}{2} &=\int_{\Omega} B\cdot\widetilde{B}\,dx -\lambda_{1}\int_{\Omega} B\cdot\widetilde{A}\,dx -\lambda_{2}\int_{\Omega} (B+\omega)\cdot\widetilde{A}\,dx=\int_{\Omega} \left[\nabla\times B-\lambda_1 B-\lambda_2(B+\omega)\right]\cdot \widetilde{A}\,dx,
\end{aligned}\]
where
\[
\lambda_1 =\frac{\delta^{22}}{\delta}\int_{\Omega}B\cdot B^1\,dx-\frac{\delta^{21}}{\delta}\int_{\Omega}B\cdot B^2\,dx, \quad
\lambda_2 =-\frac{\delta^{12}}{\delta}\int_{\Omega}B\cdot B^1\,dx+\frac{\delta^{11}}{\delta}\int_{\Omega}B\cdot B^2\,dx.
\]
This completes the proof of Proposition \ref{Prop:2.2}.
\end{proof}

%%%%%%%%%%%%%%%%%%%%%%%
\subsubsection{\bf Conserved quantities}
%%%%%%%%%%%%%%%%%%%%%%%%
We now in position to discuss when the energy, the magneto helicity, and the magneto-vorticity helicity are preserved. 

\begin{proposition}\label{Prop:2.3}\upshape
Let $(u,B)\in C_w([0,T];L^2({\Omega}))$ be a weak solution of \eqref{Ideal Hall MHD} in the sense of distributions in {$\Omega=\mathbb{T}^3$ or $\Omega=\mathbb{R}^3$}.
\begin{enumerate}
\item If $u\in L^3([0,T];L^3)$ and $B+\omega\in L^3([0,T];L^3)\cap C_w([0,T];\dot{H}^{-\frac{1}{2}})$, then $\mathcal{H}_{B+\omega}$ is conserved.
\item If $u\in L^3([0,T];L^3)$ and $B\in L^3([0,T];B^{\frac{1}{3}}_{3,c(\mathbb{N})})\cap C_w([0,T];\dot{H}^{-\frac{1}{2}})$, then $\mathcal{H}_B$ is conserved.
\item If $u\in L^3([0,T];B^{\frac{1}{3}}_{3,c(\mathbb{N})})$ and $B\in L^3([0,T];B^{\frac{2}{3}}_{3,c(\mathbb{N})})$, then the energy $\mathcal{E}$ is conserved.
\end{enumerate}
\end{proposition}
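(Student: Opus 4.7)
The plan is to adapt the Constantin--E--Titi mollification argument, refined with Besov space estimates, to the Hall MHD system, building on the conservation proofs for classical MHD in \cite{Kang Lee}. Fix a standard radial mollifier $\varphi_\epsilon$, write $f^\epsilon = \varphi_\epsilon \ast f$, and for each of the three parts mollify the evolution equation controlling the quantity of interest, test against the appropriate mollified field, and reduce the conservation to showing that the commutator errors vanish in $L^1_tL^1_x$ as $\epsilon \to 0$ under the stated regularity.

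For part (1) I start from the magneto-vorticity equation \eqref{B+omega eq} and set $V = A+u$, $W = B+\omega = \nabla \times V$. Choosing the Coulomb gauge so that $V^\epsilon_t = (u \times W)^\epsilon + \nabla q^\epsilon$ and using $\nabla\times V^\epsilon=W^\epsilon$ and $\dv W^\epsilon = 0$ gives
\[
\frac{d}{dt} \int_\Omega V^\epsilon \cdot W^\epsilon \, dx = 2 \int_\Omega (u\times W)^\epsilon \cdot W^\epsilon \, dx.
\]
Subtracting the pointwise zero $(u^\epsilon \times W^\epsilon)\cdot W^\epsilon = 0$ reduces the right-hand side to the commutator $(u\times W)^\epsilon - u^\epsilon\times W^\epsilon$ tested against $W^\epsilon$, and this tends to $0$ when $u, W \in L^3_t L^3_x$ by the same $L^3L^3$ argument used for $\mathcal{H}_B$ in classical MHD in \cite{Kang Lee}; the weak continuity in $\dot H^{-1/2}$ serves only to evaluate $\mathcal{H}_{B+\omega}$ at individual times after the time integration has been carried out.

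For part (2) I mollify \eqref{Ideal Hall MHD b} and test against $A^\epsilon$, producing two commutators on the right. The classical one $(u\times B)^\epsilon - u^\epsilon\times B^\epsilon$ paired against $B^\epsilon$ is controlled under $u,B \in L^3_tL^3_x$ exactly as in (1); the new Hall commutator $(J\times B)^\epsilon - J^\epsilon\times B^\epsilon$ paired against $B^\epsilon$ is cubic in $B$ once $J=\nabla\times B$ is substituted, so to avoid losing a full derivative I transfer the curl in $J$ onto $\varphi_\epsilon$ and then apply the standard Besov commutator estimate, which yields decay under $B \in L^3_t B^{1/3}_{3,c(\mathbb{N})}$ (the $c(\mathbb{N})$ component upgrading the resulting $O(1)$ bound to $o(1)$). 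Part (3) follows the same template: test the mollified momentum equation against $u^\epsilon$ and the mollified induction equation against $B^\epsilon$, so that the kinetic Onsager commutator from $\omega\times u$ vanishes under $u \in L^3_t B^{1/3}_{3,c(\mathbb{N})}$, the coupling terms $J\times B$ in the momentum equation and $\nabla\times(B\times u)$ in the induction equation cancel formally and leave a bilinear error controlled by $u \in L^3 B^{1/3}_{3,c(\mathbb{N})}$ together with $B \in L^3 B^{2/3}_{3,c(\mathbb{N})}$ (the extra $1/3$ derivative on $B$ compensating for the $\nabla\times B$ produced on mollification), while the purely Hall flux $\int \nabla\times(J\times B)\cdot B\,dx = 0$ survives only as a cubic-in-$B$ commutator that vanishes in the limit precisely under $B \in L^3 B^{2/3}_{3,c(\mathbb{N})}$.

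The main obstacle is the Hall cubic term in (2) and (3): unlike the classical MHD coupling it is cubic, not bilinear, so $L^3_tL^3_x$ on $B$ is not enough. The correct strategy is to never expand $J$ explicitly but to keep the double-mollification commutator structure, transferring the derivative onto $\varphi_\epsilon$ so that the standard Besov bound with H\"older exponents summing to $1$ (for helicity) or $2$ (for energy) can be invoked. This is also the structural reason why the $B$-regularity in (2) and (3) is strictly stronger than the $L^3L^3$ hypothesis that suffices for classical MHD.
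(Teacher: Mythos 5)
Your proposal is correct and follows essentially the same route as the paper: Section \ref{sec:3} implements the Constantin--E--Titi commutator argument in its Littlewood--Paley form following \cite{CCFS08}, with the low-pass truncation $S_Q$ (itself a convolution with $\widetilde{h}_Q$) playing the role of your mollifier $\varphi_\epsilon$ and the splitting \eqref{division of quadratic} being exactly your commutator decomposition. Your key structural points --- pointwise orthogonality disposing of the $L^3L^3$ terms in (1) and the first term of (2), rewriting the Hall nonlinearity in divergence form so the extra curl lands on the regularized field and is paid for by the $\tfrac13$ (helicity) resp. $\tfrac23$ (energy) Besov regularity of $B$, with the $c(\mathbb{N})$ condition upgrading $O(1)$ to $o(1)$ --- are precisely the steps the paper carries out.
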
 

Proposition \ref{Prop:2.3} will be proved in Section \ref{sec:3} after preparing some technical tools. Proposition \ref{Prop:2.3} can be proved in both $\mathbb{T}^3$ and $\mathbb{R}^3$, but we only deal with Proposition \ref{Prop:2.3} in $\mathbb{R}^{3}$ because we will use the arguments in \cite{CCFS08, Dai 4, Kang Lee}. For the same reason, we will prove Theorem \ref{thm:stability1} and Theorem \ref{Theorem 2.3} {stated below} in $\mathbb{R}^{3}$.

%%%%%%%%%%%%%%%%%%%%%
\subsection{Time-dependent double Beltrami states}\label{sec:2.2}
%%%%%%%%%%%%%%%%%%%%%%%
We consider the viscous and resistive Hall MHD:
\eqn \label{vH MHD}
\begin{aligned}
&u_{t}  +\omega \times u+\nabla \overline{p}-\nu\Delta u=J\times B,  \\
&B_{t}  + \nabla \times (B\times u)+\nabla \times \left(J\times B\right)- \eta\Delta B=0,   \\
&\dv u=\dv B=0.
\end{aligned}
\een

We first set $\nu=\eta>0$ which is crucial to find the exact form of time-dependent double Beltrami states of (\ref{vH MHD}). (This condition is also used to show the partial regularity or regularity of weak solutions \cite{Chae Wolf 1, Chae Wolf 2} and to simulate Hall MHD numerically \cite{Meyrand, Mininni}.)

%%%%%%%%%%%%%%%%%%%%
\subsubsection{\bf Exact form of time-dependent double Beltrami states}
%%%%%%%%%%%%%%%%%%%
Under the assumption $\nu=\eta>0$, we rewrite it as (\ref{H MHD 1})
\begin{equation}\label{vH MHD 1}
\begin{split}
u_{t}-\nu\Delta u&=-(u-J)\times B-(B+\omega)\times u-\nabla \overline{p},\\
B_{t}-\nu\Delta B&=\nabla\times\left((u-J)\times B\right).
\end{split}
\end{equation}
So the nonlinear terms in (\ref{vH MHD}) are expressed in terms of $(B+\omega)\times u$ and $(u-J)\times B$. Based on this observation, we can derive the following equations
\[
\begin{split}
&(B+\omega)_{t}-\nu\Delta(B+\omega)=-\nabla \times \left((B+\omega)\times u\right),\\
&(u-J)_{t}-\nu\Delta(u-J)=-(u-J)\times B-(B+\omega)\times u-\nabla \overline{p}-\nabla \times \left(\nabla \times \left((u-J)\times B\right)\right).
\end{split}
\]
Let 
\[
\Phi=B+\omega-\alpha u, \quad\Psi=u-J+\beta B,
\]
where $\alpha$ and $\beta$ are constants. Then, 
\[
(B+\omega)\times u=\Phi\times u, \quad (u-J)\times B=\Psi\times B
\]
from which we deduce that  
\[
\begin{split}
&\Phi_{t}-\nu\Delta\Phi=-\nabla \times  (\Phi\times u)+\alpha\Psi\times B+\alpha \Phi\times u+\alpha\nabla \overline{p},\\
&\Psi_{t}-\nu\Delta\Psi=-\Psi\times B-\Phi\times u-\nabla \times \left(\nabla \times (\Psi\times B)\right)+\beta\nabla \times (\Psi\times B)-\nabla \overline{p}.
\end{split}
\]
If we initially set
\eqn \label{Beltrami initial data}
\Phi_{0}={B}_0+{\omega}_0-\alpha {u}_0=0, \quad \Psi_{0}={u}_0-{J}_0+\beta {B}_0=0,
\een
we can find one exact solution of \eqref{vH MHD} satisfying $\Phi=\Psi=0$ for all $t\geq0$.

\begin{theorem}\label{Theorem 2.1}\upshape
Let $\nu=\eta>0$ and assume that $(\overline{u}_{0}, \overline{B}_{0})$ satisfies (\ref{Beltrami initial data}) for constant $\alpha,\beta$. Then, there exists an exact solution $(\overline{u},\overline{B})$ of \eqref{vH MHD} for all $t\geq0$ starting from $(\overline{u}_0,\overline{B}_0)$ which is expressed as follows. 
\begin{enumerate}[]
\item (1) If $|\alpha-\beta|>2$, then from Theorem \ref{thm:classification} (2), $(\overline{u}_0,\overline{B}_0)$ can be expressed as 
\[
(\overline{u}_0,\overline{B}_0)=(\overline{u}^1_0,\overline{B}^1_0)+(\overline{u}^2_0,\overline{B}^2_0)\in\mathcal{B}_{\lambda_1}+\mathcal{B}_{\lambda_2}
\]
for $\lambda_1,\lambda_2$ defined in \eqref{pm lambda}. Moreover, the exact solution $(\overline{u},\overline{B})$ has the form
\[
\overline{u}(t,x)=e^{-\nu\lambda_1^2 t}\overline{u}^1_0(x)+e^{-\nu\lambda_2^2 t}\overline{u}^2_0(x), \quad \overline{B}(t,x)=e^{-\nu\lambda_1^2 t}\overline{B}^1_0(x)+e^{-\nu\lambda_2^2 t}\overline{B}^2_0(x).
\]

\item (2) If $|\alpha-\beta|=2$, then $\lambda:=\lambda_1=\lambda_2=\frac{\alpha+\beta}{2}$ and the exact solution $(\overline{u},\overline{B})$ has the form
\begin{align*}
&\overline{u}(t,x)=e^{-\nu\lambda^2 t}\overline{u}_{0}(x)-2\nu\lambda te^{-\nu\lambda^2 t}\left(\nabla\times\overline{u}_{0}(x)-\lambda\overline{u}_{0}(x)\right),\\
&\overline{B}(t,x)=e^{-\nu\lambda^2 t}\overline{B}_{0}(x)-2\nu\lambda te^{-\nu\lambda^2 t}\left(\nabla\times\overline{B}_{0}(x)-\lambda\overline{B}_{0}(x)\right).
\end{align*}
\end{enumerate}
\end{theorem}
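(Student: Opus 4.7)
The plan is to exploit the reformulation \eqref{vH MHD 1}: the nonlinear terms there are only $(B+\omega)\times u$ and $(u-J)\times B$. Introducing
\[
\Phi(t,x)=B+\omega-\alpha u,\qquad \Psi(t,x)=u-J+\beta B,
\]
with the constants $\alpha,\beta$ inherited from \eqref{Beltrami initial data}, one has $(B+\omega)\times u=\Phi\times u$ and $(u-J)\times B=\Psi\times B$. Hence the strategy is to write down an explicit candidate $(\overline{u},\overline{B})$ for which $\Phi(t)\equiv\Psi(t)\equiv 0$ for every $t\ge 0$; then the system \eqref{vH MHD 1} degenerates to the linear heat equations $\overline{u}_t=\nu\Delta\overline{u}$ and $\overline{B}_t=\nu\Delta\overline{B}$ with constant pressure, and verification amounts to checking two things separately at each time $t$: (i) $(\overline{u},\overline{B})$ solves the heat equation, and (ii) the two Beltrami conditions $\overline{B}+\overline{\omega}=\alpha\overline{u}$, $\overline{u}-\overline{J}=-\beta\overline{B}$ continue to hold.

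For part (1), $|\alpha-\beta|>2$ gives two distinct real roots $\lambda_1\ne\lambda_2$ of \eqref{ch eq of lambda}, and Theorem \ref{thm:classification}(2) yields $\overline{u}_0=\overline{u}^1_0+\overline{u}^2_0$ with $\overline{u}^i_0\in\mathcal{B}_{\lambda_i}$. Setting $\overline{B}^i_0:=(\alpha-\lambda_i)\overline{u}^i_0$ one checks immediately that $\overline{B}^i_0\in\mathcal{B}_{\lambda_i}$ and that $\overline{B}_0=\overline{B}^1_0+\overline{B}^2_0$ is precisely the decomposition coming from \eqref{eq:HMHD-Beltrami}. I would then define $\overline{u}(t)=\sum_i e^{-\nu\lambda_i^2 t}\overline{u}^i_0$ and analogously $\overline{B}(t)$. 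The heat equation follows since $-\Delta\overline{u}^i_0=\nabla\times\nabla\times\overline{u}^i_0=\lambda_i^2\overline{u}^i_0$ on divergence-free Beltrami fields. The preservation $\Phi(t)=\Psi(t)=0$ then reduces to the algebraic identities $(\alpha-\lambda_i)+\lambda_i=\alpha$ and $1-\lambda_i(\alpha-\lambda_i)=-\beta(\alpha-\lambda_i)$, the latter being exactly Vieta's relations $\lambda_1+\lambda_2=\alpha+\beta$, $\lambda_1\lambda_2=1+\alpha\beta$ read off from \eqref{ch eq of lambda}.

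For part (2), $|\alpha-\beta|=2$ produces the repeated root $\lambda=(\alpha+\beta)/2$, and the double-curl system \eqref{repeated eq} applied to $\overline{u}_0$ shows that $v_0:=\nabla\times\overline{u}_0-\lambda\overline{u}_0\in\mathcal{B}_\lambda$ (and similarly $w_0:=\nabla\times\overline{B}_0-\lambda\overline{B}_0\in\mathcal{B}_\lambda$). Here I would use the Jordan-block ansatz
\[
\overline{u}(t)=e^{-\nu\lambda^2 t}\overline{u}_0-2\nu\lambda t\,e^{-\nu\lambda^2 t}v_0,
\]
and the analogous formula for $\overline{B}$, motivated by the classical resolution of a degenerate linear system at a generalized eigenvector. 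A direct computation using $\nabla\times v_0=\lambda v_0$ gives
\[
\nabla\times\overline{u}(t)=\lambda e^{-\nu\lambda^2 t}\overline{u}_0+(1-2\nu\lambda^2 t)e^{-\nu\lambda^2 t}v_0,
\]
whence $\nabla\times\nabla\times\overline{u}(t)=\lambda^2 e^{-\nu\lambda^2 t}\overline{u}_0+2\lambda(1-\nu\lambda^2 t)e^{-\nu\lambda^2 t}v_0$, which matches $-\partial_t\overline{u}/\nu$ termwise; thus the heat equation holds.

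The main obstacle is the verification of $\Phi(t)\equiv\Psi(t)\equiv 0$ in part (2). Here one must first derive from $\overline{B}_0+\overline{\omega}_0=\alpha\overline{u}_0$ the identity $\overline{B}_0=(\alpha-\lambda)\overline{u}_0-v_0$, and then $w_0=(\alpha-\lambda)v_0$ using $\nabla\times v_0=\lambda v_0$ and $\alpha-\lambda=\pm 1$. With these, both $\overline{B}(t)+\overline{\omega}(t)$ and $\overline{u}(t)-\overline{J}(t)$ must be expanded into their $\overline{u}_0$- and $v_0$-components, and the coefficients of both components need to collapse through the relation $\lambda^2-(\alpha+\beta)\lambda+(1+\alpha\beta)=0$. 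The collapse in fact produces a common factor $(1+\lambda)=\alpha$ on one side and $(1-\lambda)=-\beta$ on the other, so that $\overline{B}(t)+\overline{\omega}(t)=\alpha\overline{u}(t)$ and $\overline{u}(t)-\overline{J}(t)=-\beta\overline{B}(t)$ are maintained for every $t\ge 0$. With both conditions preserved, the nonlinear terms in \eqref{vH MHD 1} identically vanish, completing the verification.
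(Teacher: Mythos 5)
Your proposal is correct and follows essentially the same route as the paper: both reduce \eqref{vH MHD} via $\Phi=B+\omega-\alpha u$, $\Psi=u-J+\beta B$ to the heat equation coupled with the double-curl identity $\lambda^2-(\alpha+\beta)\lambda+(1+\alpha\beta)=0$, and the algebra you outline (including $\overline{B}_0=(\alpha-\lambda)\overline{u}_0-v_0$, $w_0=(\alpha-\lambda)v_0$, and the Vieta relations) checks out. The one difference is direction: the paper derives the formulas by integrating the ODEs for $\nabla\times\overline{u}-\lambda_i\overline{u}$ under the ansatz $\Phi\equiv\Psi\equiv 0$, whereas you posit the formulas and verify the ansatz persists -- a worthwhile explicit check that the paper leaves implicit.
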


\begin{remark} \upshape \label{remark exact form}
\noindent
\begin{enumerate}[]
\item \textbullet  \  Theorem \ref{Theorem 2.1} does not answer the uniqueness for the Cauchy problem of \eqref{vH MHD}. The uniqueness and stability for the exact solutions will be discussed in Theorem \ref{thm:stability1}.

\item \textbullet  \  Since $\alpha, \beta$ are constants, $(\overline{u}_0,\overline{B}_0)$ is smooth, and so $(\overline{u},\overline{B})$ is smooth for all $t\geq0$. However, $(\overline{u},\overline{B})$ does not belong to $L^2(\mathbb{R}^3)$ for all $t\geq0$.

\vspace{1ex}
\item \textbullet  \ When $1+\alpha\beta=\lambda_1\lambda_2\neq0$, the exact solution $(\overline{u},\overline{B})$ decays exponentially in time. In comparison, when $1+\alpha\beta=\lambda_1\lambda_2=0$ then the exact solution can be divided into two parts: exponential decaying part and non-decaying part. We note that the non-decaying part consists of harmonic vector fields. More precisely, if $1+\alpha\beta=0$ and $\alpha+\beta\neq0$, then $\lambda_1=\alpha+\beta$, $\lambda_2=0$ and so we obtain
\[
(\overline{u},\overline{B})(t,x)=e^{-\nu\lambda^2_1 t}(\overline{u}^1_0,\overline{B}^1_0)(x)+(\overline{u}^2_0,\overline{B}^2_0)(x),
\]
where $(\overline{u}^1_0,\overline{B}^1_0)\in\mathcal{B}_{\lambda_1}$ and $(\overline{u}^2_0,\overline{B}^2_0)\in \mathcal{B}_0:=\{v\in C^{\infty}:\nabla\times v=0,\, \nabla\cdot v=0\}$. Here, $v\in\mathcal{B}_0$ is a harmonic vector field due to the vector calculus identity
\[
-\Delta v=\nabla\times(\nabla\times v)-\nabla(\nabla\cdot v).
\]
Lastly if $1+\alpha\beta=\alpha+\beta=0$, then 
\[
(\overline{u},\overline{B})(t,x)=(\overline{u}_0,\overline{B}_0)(x),
\]
where $(\overline{u}_0,\overline{B}_0)$ is a pair of harmonic vector functions satisfying
\[
-\Delta \overline{u}_0=-\Delta \overline{B}_0=0,\quad \dv\overline{u}_0=\dv\overline{B}_0=0.
\]
Therefore, we conclude from the Liouville theorem for harmonic functions that if we let $(\overline{u}_0,\overline{B}_0)$ be a smooth and bounded double Beltrami state, then the non-decaying part of the exact solution $(\overline{u},\overline{B})$ must be constant.
\end{enumerate}
\end{remark}

\begin{proof}[\bf Proof of Theorem \ref{Theorem 2.1}]
Suppose that $\Phi=\Psi=0$ for all $t\geq0$. Then, from (\ref{vH MHD 1}),
\begin{align*}
&\overline{B}+\overline{\omega}-\alpha \overline{u}=0,\quad \overline{u}-\overline{J}+\beta \overline{B}=0, \label{eq5 a}\\
&\overline{u}_{t}-\nu\Delta \overline{u}=0,\quad \overline{B}_{t}-\nu\Delta \overline{B}=0.
\end{align*}
We only focus on the velocity $\overline{u}$: the magnetic field $\overline{B}$ can be obtained by the same process as for $\overline{u}$, or by just using the relation $\overline{B}=-\overline{\omega}+\alpha \overline{u}$. From \eqref{Double-Beltrami a} and \eqref{pm lambda}, we have 
\eqn \label{eq6}
\begin{aligned}
\nabla\times\nabla\times\overline{u}-(\lambda_1+\lambda_2)\nabla\times\overline{u}+\lambda_1\lambda_2\overline{u}&=(\nabla\times-\lambda_1)(\nabla\times\overline{u} -\lambda_{2}\overline{u}) \\
&=(\nabla\times-\lambda_2)(\nabla\times\overline{u} -\lambda_{1}\overline{u})=0.
\end{aligned}
\een 
Using 
\[
\overline{u}_{t}-\nu\Delta \overline{u}=\overline{u}_{t}+\nu\nabla \times \nabla\times \overline{u} =0
\]
and linearity of the equation, we have
\[
(\nabla\times\overline{u} -\lambda_{i}\overline{u})_{t}+\nu\nabla \times \nabla \times (\nabla\times\overline{u}-\lambda_{i}\overline{u})=0
\] 
for $i=1,2$. Hence, \eqref{eq6} implies 
\[
\begin{aligned}
(\nabla\times\overline{u} -\lambda_{1}\overline{u})_{t}+\nu\lambda_{2}^{2}\left(\nabla\times\overline{u} -\lambda_{1}\overline{u}\right)=0,\\
(\nabla\times\overline{u} -\lambda_{2}\overline{u})_{t}+\nu\lambda_{1}^{2}\left(\nabla\times\overline{u} -\lambda_{2}\overline{u}\right)=0.
\end{aligned}
\]
By solving this, we obtain 
\[
\begin{aligned}
(\nabla\times\overline{u}-\lambda_{1}\overline{u})(t,x)=e^{-\nu \lambda_{2}^{2}t}\left(\nabla\times\overline{u}_{0}(x)-\lambda_{1}\overline{u}_{0}(x)\right), \\
(\nabla\times\overline{u}-\lambda_{2}\overline{u})(t,x)=e^{-\nu \lambda_{1}^{2}t}\left(\nabla\times\overline{u}_{0}(x)-\lambda_{2}\overline{u}_{0}(x)\right).
\end{aligned}
\]

\noindent
$\blacktriangleright$ If $|\alpha-\beta|>2$, then $\lambda_1\neq\lambda_2$ and $\overline{u}_0=\overline{u}^1_0+\overline{u}^2_0\in\mathcal{B}_{\lambda_1}+\mathcal{B}_{\lambda_2}$, so we have
\[
\begin{split}
(\lambda_{1}-\lambda_{2})\overline{u}(t,x) &=e^{-\nu \lambda_{1}^{2}t}\left(\nabla\times\overline{u}_{0}(x)-\lambda_{2}\overline{u}_{0}(x)\right)-e^{-\nu \lambda_{2}^{2}t}\left(\nabla\times\overline{u}_{0}(x)-\lambda_{1}\overline{u}_{0}(x)\right) \\
&=(\lambda_1-\lambda_2)e^{-\nu\lambda_1^2 t}\overline{u}^1_0(x)+(\lambda_1-\lambda_2)e^{-\nu\lambda_2^2 t}\overline{u}^2_0(x)
\end{split}
\]
which implies Theorem \ref{Theorem 2.1} (1).

\vspace{1ex}
\noindent 
$\blacktriangleright$ If $|\alpha-\beta|=0$, then $\lambda=\lambda_1=\lambda_2=\frac{\alpha+\beta}{2}$, so
\[
(\nabla\times\overline{u}-\lambda \overline{u})(t,x)=e^{-\nu\lambda^2 t}(\nabla\times\overline{u}_{0}-\lambda\overline{u}_{0})(x).
\]
Also from \eqref{eq6}, we have
\[
\overline{u}_{t}=\nu\Delta \overline{u}=-\nu\nabla \times\nabla\times \overline{u} =-2\nu\lambda\nabla\times\overline{u} +\nu\lambda^2\overline{u}=-2\nu\lambda(\nabla\times\overline{u}-\lambda\overline{u})-\nu\lambda^2\overline{u}
\]
{from} which we deduce that 
\[
\frac{d}{dt}\left(e^{\nu\lambda^2 t}\overline{u}\right)=-2\nu\lambda e^{\nu\lambda^2 t}(\nabla\times\overline{u}-\lambda\overline{u})=-2\nu\lambda (\nabla\times\overline{u}_{0}(x)-\lambda\overline{u}_{0}(x))
\]
which implies Theorem \ref{Theorem 2.1} (2).
\end{proof}

%%%%%%%%%%%%%%%%%%%%%%%%%%%%%%%%%%%%%%
\subsubsection{\bf Stability of time-dependent double Beltrami states}
%%%%%%%%%%%%%%%%%%%%%%%%%%%%%%%%%%%%%

Since we find exact solutions in Theorem \ref{Theorem 2.1}, it is natural to study the stability of these solutions. Suppose that $\nu=\eta>0$ and $(\overline{u}_0,\overline{B}_0)$ is a smooth and bounded double Beltrami state of \eqref{Beltrami initial data} with constant factors $\alpha,\beta$. Then, the time-dependent double Beltrami state $(\overline{u},\overline{B})(t)$ defined in Theorem \ref{Theorem 2.1} with initial data $(\overline{u}_0,\overline{B}_0)$ is also a smooth and bounded solution, and the temporal non-decaying part of $(\overline{u},\overline{B})$ must be constant from Remark \ref{remark exact form}.

\begin{theorem}\upshape \label{thm:stability1}
Let $\nu=\eta>0$ and $(\overline{u}_0,\overline{B}_0)$ be a smooth and bounded double Beltrami state of \eqref{Beltrami initial data} with constant factors $\alpha,\beta$. Suppose $(u_0,B_0)$ can be expressed as $(u_0,B_0)=(\overline{u}_0,\overline{B}_0)+(v_0,b_0)$ where $(v_0,b_0)\in H^3(\mathbb{R}^3)$ with sufficiently small $\|v_0\|_{H^3}+\|b_0\|_{H^3}$. Then, there exists a global unique solution 
\[
(u,B)=(\overline{u},\overline{B})+(v,b),
\] 
where $(\overline{u},\overline{B})$ is the time-dependent double Beltrami state established in Theorem \ref{Theorem 2.1} with initial data $(\overline{u}_0,\overline{B}_0)$ and $(v,b)$ is the global unique solution of \eqref{eq:perturbed-HMHD} in $C([0,\infty);H^3)$ satisfying
\eqn \label{perturbed eq bounds}
\norm{v(t)}{H^{3}}^{2}+\norm{b(t)}{H^{3}}^{2}+\nu\int^{t}_{0} \left(\norm{\nabla v(\tau)}{H^{3}}^{2} +\norm{\nabla b(\tau)}{H^{3}}^{2}\right)\,d\tau \leq C(\|v_0\|_{H^3}^2+\|b_0\|_{H^3}^2).
\een
Moreover, we derive the asymptotic behavior of $(u,B)$ as follows:
\[
\sup_{x\in\mathbb{R}^3}\left|(u,B)-(U,M)\right|\leq C(1+t)^{-\frac{3}{4}}\quad\text{for all $t\geq0$},
\]
where $(U,M)$ is a pair of constant vectors in temporal non-decaying part of $(\overline{u},\overline{B})$ which become trivial when $1+\alpha\beta\neq0$.
\end{theorem}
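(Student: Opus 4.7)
The plan is to write $(u, B) = (\overline{u}, \overline{B}) + (v, b)$ and derive the system \eqref{eq:perturbed-HMHD} for the perturbation by subtracting the exact solution from \eqref{vH MHD}. Because $(\overline{u}, \overline{B})$ is a double Beltrami state, the nonlinear terms $(\overline{B}+\overline{\omega})\times \overline{u}$ and $(\overline{u}-\overline{J})\times \overline{B}$ vanish identically, so the perturbed system reduces to the heat operator $\partial_t - \nu\Delta$ acting on $(v,b)$, plus (i) linear terms whose coefficients are built from $(\overline{u},\overline{B})$ and their derivatives, and (ii) quadratic/cubic nonlinearities structurally identical to those of the full Hall MHD. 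By Theorem \ref{Theorem 2.1} and Remark \ref{remark exact form}, the oscillatory part $(\overline{u},\overline{B})-(U,M)$ is a finite sum of Beltrami-type modes with nonzero eigenvalues $\lambda_i$, so every $W^{k,\infty}(\mathbb{R}^3)$ norm of this part decays like $e^{-c t}$ for some $c>0$; in particular $(\overline{u},\overline{B})$ and all its spatial derivatives are uniformly bounded in $t \ge 0$.

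Next I would run an $H^3$ energy estimate on $(v,b)$ in the spirit of \cite{Chae Degond Liu}. Applying $\partial^{\alpha}$ for $|\alpha|\le 3$, testing against $(\partial^{\alpha} v,\partial^{\alpha} b)$, integrating by parts, and using Moser-type bilinear inequalities produces an inequality of the form
\begin{equation*}
\frac{1}{2}\frac{d}{dt}\bigl(\|v\|_{H^3}^2+\|b\|_{H^3}^2\bigr) + \nu\bigl(\|\nabla v\|_{H^3}^2+\|\nabla b\|_{H^3}^2\bigr) \le C\phi(t)\bigl(\|v\|_{H^3}^2+\|b\|_{H^3}^2\bigr) + C\bigl(\|v\|_{H^3}+\|b\|_{H^3}\bigr)\bigl(\|\nabla v\|_{H^3}^2+\|\nabla b\|_{H^3}^2\bigr),
\end{equation*}
where $\phi \in L^{1}([0,\infty))$ because the oscillatory part of $(\overline{u},\overline{B})$ decays exponentially while the constant drift by $(U,M)$ contributes only skew-adjoint terms that vanish after integration by parts. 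The Hall linear piece $\nabla\times\bigl((\nabla\times b)\times \overline{B}\bigr)+\nabla\times\bigl((\nabla\times\overline{B})\times b\bigr)$ carries one derivative more than the standard MHD linear part, but since $\overline{B}$ is controlled in $W^{4,\infty}$ uniformly in $t$, the extra derivative can be commuted onto $\overline{B}$ and the surviving $\nabla b$ is absorbed by the dissipation. A bootstrap under $\|v_0\|_{H^3}+\|b_0\|_{H^3}\ll 1$ closes the estimate and gives global existence together with \eqref{perturbed eq bounds}; uniqueness follows from the same type of estimate at the $L^2$ level applied to the difference of two solutions.

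For the asymptotic behavior, I would first establish polynomial decay of $\|v(t)\|_{L^2}+\|b(t)\|_{L^2}$ via Schonbek's Fourier splitting method, applied to the Duhamel representation of $(v,b)$ and combined with the uniform $H^3$ bound \eqref{perturbed eq bounds}. The linear part is a perturbed heat operator (heat operator plus constant drift plus exponentially decaying coefficients), so the standard splitting argument transfers with minor modifications. Interpolating this $L^2$-decay with the uniform $H^3$ bound via Gagliardo-Nirenberg and Sobolev embedding in $\mathbb{R}^3$ yields
\begin{equation*}
\|v(t)\|_{L^\infty}+\|b(t)\|_{L^\infty} \le C(1+t)^{-3/4}.
\end{equation*}
Combined with $\|(\overline{u},\overline{B})(t)-(U,M)\|_{L^\infty}\le Ce^{-ct}$, which is immediate from Theorem \ref{Theorem 2.1} and Remark \ref{remark exact form}, the triangle inequality delivers the claimed pointwise asymptotics.

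The main obstacle I expect is the interaction between the Hall term and the non-small background $(\overline{u},\overline{B})$: the Hall linearization is of the same differential order as the dissipation, so closing the $H^3$ estimate requires careful commutator identities and full exploitation of the exponential decay of the oscillatory part of the background to ensure that the Gr\"onwall factor $\exp(C\int_0^t \phi)$ remains bounded. A secondary technical hurdle is obtaining the sharp $(1+t)^{-3/4}$ rate, since Fourier splitting must be carried out for a perturbed (non-purely-heat) semigroup, and the nonlinear forcing has to be controlled at low frequencies without losing the optimal exponent.
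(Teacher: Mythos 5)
Your decomposition $(u,B)=(\overline{u},\overline{B})+(v,b)$, the derivation of \eqref{eq:perturbed-HMHD}, the use of the exponential decay of the non-constant part of the background in $W^{3,\infty}\times W^{4,\infty}$, and the small-data $H^3$ energy scheme \`a la Chae--Degond--Liu with a Gr\"onwall closure are exactly what the paper does for global existence (Proposition \ref{Prop perturbed gwp}); the observation that the $(U,M)$-drift terms, including $\nabla\times((\nabla\times b)\times M)$, are energy-neutral is also correct.

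The decay argument, however, has a genuine gap. You propose Schonbek's Fourier splitting to obtain algebraic $L^2$ decay of $(v,b)$, but the hypotheses only give $(v_0,b_0)\in H^3(\mathbb{R}^3)$: with no $L^1$ (or negative Sobolev/Besov) control of the low frequencies, no algebraic rate for $\|v(t)\|_{L^2}$ can be extracted --- already $\|e^{t\Delta}v_0\|_{L^2}$ decays arbitrarily slowly for generic $L^2$ data, so the first step fails under the stated assumptions. Moreover, even if you grant the optimal rate $\|v(t)\|_{L^2}\lesssim (1+t)^{-3/4}$ that splitting would give for $L^1\cap L^2$ data in 3D, interpolating against the merely \emph{uniform} $H^3$ bound \eqref{perturbed eq bounds} via Gagliardo--Nirenberg yields $\|v\|_{L^\infty}\le C\|v\|_{L^2}^{1/2}\|v\|_{\dot H^3}^{1/2}\lesssim (1+t)^{-3/8}$, which falls short of the claimed $(1+t)^{-3/4}$; to reach the stated rate you would also need decay of the higher-order norms, which your sketch does not provide. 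The paper avoids both issues (Proposition \ref{Prop perturbed decay}): it uses the uniform $L^2$ bound and the interpolation $\|v\|_{\dot H^3}\le\|v\|_{L^2}^{1/4}\|\nabla v\|_{\dot H^3}^{3/4}$ to convert the $\dot H^3$ energy inequality into $\frac{d}{dt}Y+K_0Y^{4/3}\le K_1(1+t)^{-4}$ for $Y=\|v\|_{\dot H^3}^2+\|b\|_{\dot H^3}^2$, whence $Y\lesssim(1+t)^{-3}$ by Lemma \ref{Decay lemma} with no assumption on low frequencies; interpolating downward gives $\|v\|_{\dot H^{3/2}}\lesssim(1+t)^{-3/4}$, and the logarithmic Sobolev inequality of Lemma \ref{log-ineq lemma} transfers this to $L^\infty$. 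If you want to salvage your route you must either add an $L^1$-type hypothesis on $(v_0,b_0)$ (changing the theorem) or, as the paper does, obtain decay of $\|(v,b)\|_{\dot H^3}$ itself rather than of $\|(v,b)\|_{L^2}$.
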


Theorem \ref{thm:stability1} is a direct result from Theorem \ref{Theorem 2.1} and the  two propositions {stated below}. To show the propositions, we suppose that $(\overline{u},\overline{B})=(U,M)+(\overline{v},\overline{b})$ where $(U,M)$ is a pair of constant vectors (non-decaying part) and $(\overline{v},\overline{b})$ is a pair of smooth and bounded vector fields for all $t\geq0$ (decaying part). If we set $(v,b)=(u-\overline{u},B-\overline{B})$, then $(v,b)$ satisfies 
\eqn\label{eq:perturbed-HMHD}
\begin{split}
&v_{t}-\nu\Delta v+(v\cdot\nabla) v-(b\cdot\nabla) b+\nabla q +(U\cdot\nabla)v-(M\cdot\nabla)b=-(v\cdot\nabla)\overline{v}-(\overline{v}\cdot\nabla) v+(b\cdot\nabla)\overline{b}+(\overline{b}\cdot\nabla) b,\\
&b_{t}-\nu\Delta b+(v\cdot\nabla) b-(b\cdot\nabla) v +\nabla\times((\nabla\times b)\times b)+(U\cdot\nabla)b-(M\cdot\nabla)v+\nabla\times((\nabla\times b)\times M) \\
&=-(v\cdot\nabla)\overline{b}-(\overline{v}\cdot\nabla) b+(b\cdot\nabla)\overline{v}+(\overline{b}\cdot\nabla) v -\nabla\times((\nabla\times b)\times\overline{b})-\nabla\times((\nabla\times\overline{b})\times b),\\
&\dv  v=\dv b=0,\\
&(v_{0},b_{0})=(u_{0}-\overline{u}_{0},B_{0}-\overline{B}_{0}).
\end{split}
\een 

Since $(\overline{v},\overline{b})$ is smooth, but not necessarily in $L^2(\mathbb{R}^{3})$, we control $(\overline{v},\overline{b})$ only using $L^\infty$-type norms. In fact, $(\overline{v},\overline{b})$ and its derivatives decay exponentially in time from Theorem \ref{Theorem 2.1} and thus we can control the right-hand side of \eqref{eq:perturbed-HMHD} using the uniform bound of 
\[
\int^{T}_{0}\left(\norm{\overline{v}(\tau)}{W^{3,\infty}}^{2}+\norm{\overline{b}(\tau)}{W^{4,\infty}}^{2}\right)\,d\tau
\]
for all $T>0$. On the one hand, we bound the left-hand side of \eqref{eq:perturbed-HMHD} following \cite{Chae Degond Liu}.

\begin{proposition}\label{Prop perturbed gwp}\upshape
Let $(v_{0},b_{0})\in H^{3}(\mathbb{R}^3)$ with $\dv v_{0}=\dv b_{0}=0$. Suppose $(\overline{v}, \overline{b})$
satisfies
\[
\int^{\infty}_{0}\left(\norm{\overline{v}(\tau)}{W^{3,\infty}}^{2}+\norm{\overline{b}(\tau)}{W^{4,\infty}}^{2}\right)\,d\tau\leq C_{0},
\]
for some $C_0>0$. Then, there exits $\epsilon>0$ such that if $\|v_{0}\|_{H^{3}} +\|b_{0}\|_{H^{3}}<\epsilon$, there exists a global unique solution $(v,b)\in C([0,\infty);H^3)$ of \eqref{eq:perturbed-HMHD} satisfying \eqref{perturbed eq bounds} for all $t\geq0$.
\end{proposition}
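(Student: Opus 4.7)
The plan is to establish local well-posedness in $H^{3}$ and then to close a global-in-time a priori bound of the form \eqref{perturbed eq bounds} via a smallness bootstrap, adapting the Chae--Degond--Liu energy framework for Hall MHD to the perturbation system \eqref{eq:perturbed-HMHD}. First, I would produce a unique local-in-time solution $(v,b)\in C([0,T^{\ast});H^{3})$ by a standard Friedrichs regularization or Picard iteration, using the embedding $H^{3}\hookrightarrow W^{1,\infty}$ in $\mathbb{R}^{3}$ to absorb the quasilinear Hall term $\nabla\times((\nabla\times b)\times b)$. The remaining task is to promote $T^{\ast}=+\infty$ by uniformly controlling $E(t):=\|v(t)\|_{H^{3}}^{2}+\|b(t)\|_{H^{3}}^{2}$ and the dissipation $D(t):=\|\nabla v(t)\|_{H^{3}}^{2}+\|\nabla b(t)\|_{H^{3}}^{2}$.

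Second, I would perform an $L^{2}$-based energy estimate after applying $\partial^{\alpha}$ for $|\alpha|\leq 3$ to \eqref{eq:perturbed-HMHD}. Three structural observations drive the proof. (a) All constant-coefficient linear terms produced by $(U,M)$, namely $(U\cdot\nabla)v$, $(M\cdot\nabla)b$, $(U\cdot\nabla)b$, $(M\cdot\nabla)v$ and $\nabla\times((\nabla\times b)\times M)$, commute with $\partial^{\alpha}$ and are skew-adjoint, so they vanish identically in the energy identity; for instance $\int ((\nabla\times\partial^{\alpha}b)\times M)\cdot(\nabla\times\partial^{\alpha}b)\,dx=0$ by the identity $(a\times M)\cdot a=0$. (b) The quadratic self-interactions $(v\cdot\nabla)v$, $(b\cdot\nabla)b$, $(v\cdot\nabla)b$, $(b\cdot\nabla)v$, together with the Hall term $\nabla\times((\nabla\times b)\times b)$, are controlled by Kato--Ponce commutator estimates: the top-order Hall piece $\nabla\times((\nabla\times\partial^{\alpha}b)\times b)$ again contributes zero after pairing with $\partial^{\alpha}b$ by $(a\times b)\cdot a=0$, leaving only a commutator of size $\|\nabla b\|_{L^{\infty}}\|b\|_{H^{3}}\|\nabla b\|_{H^{3}}\lesssim \|b\|_{H^{3}}^{2}\|\nabla b\|_{H^{3}}$ via the Sobolev embedding $\|\nabla b\|_{L^{\infty}}\lesssim\|b\|_{H^{3}}$. (c) The background coupling terms, linear in $(v,b)$ with coefficients $(\overline{v},\overline{b})$, are bounded through $L^{\infty}$-type norms of $(\overline{v},\overline{b})$; the Hall coupling $\nabla\times((\nabla\times b)\times\overline{b})+\nabla\times((\nabla\times\overline{b})\times b)$ is precisely what forces the $W^{4,\infty}$ requirement on $\overline{b}$, since up to four derivatives may fall on the background coefficient when three derivatives are distributed and one extra derivative comes from the outer curl.

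Collecting all contributions and using Young's inequality to absorb dissipation-absorbing terms into the viscous dissipation yields an inequality of the form
\begin{equation*}
\frac{d}{dt}E(t)+\frac{\nu}{2}D(t) \leq C\bigl(\|\overline{v}(t)\|_{W^{3,\infty}}^{2}+\|\overline{b}(t)\|_{W^{4,\infty}}^{2}\bigr)E(t)+C\sqrt{E(t)}\,D(t).
\end{equation*}
Under the bootstrap assumption $C\sqrt{E(t)}\leq \nu/4$ on a maximal interval, the nonlinear dissipation-absorbing term is controlled, and Gr\"onwall's inequality combined with the hypothesis $\int_{0}^{\infty}(\|\overline{v}(\tau)\|_{W^{3,\infty}}^{2}+\|\overline{b}(\tau)\|_{W^{4,\infty}}^{2})\,d\tau\leq C_{0}$ produces
\begin{equation*}
E(t)+\frac{\nu}{4}\int_{0}^{t}D(\tau)\,d\tau \leq E(0)\,e^{CC_{0}}.
\end{equation*}
Choosing $\epsilon>0$ so that $\epsilon^{2}e^{CC_{0}}<(\nu/(4C))^{2}$ strictly improves the bootstrap, extends the local solution to all of $[0,\infty)$, and delivers \eqref{perturbed eq bounds}.

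The main obstacle is the commutator analysis of the Hall term $\nabla\times((\nabla\times b)\times b)$ at the $H^{3}$ level: its quasilinearity is the reason the argument must be carried out in $H^{3}$ (above the Sobolev threshold $5/2$ in three dimensions) rather than at lower regularity, and it is also what dictates the $W^{4,\infty}$ regularity demanded of the background magnetic field $\overline{b}$. The other estimates, though numerous, are routine adaptations of standard perturbation arguments around smooth solutions of Hall MHD.
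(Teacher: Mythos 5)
Your proposal is correct and follows essentially the same route as the paper: the linear terms with constant $(U,M)$ drop out of the energy identity, the quadratic and Hall self-interactions are handled by commutator estimates exploiting $(a\times b)\cdot a=0$, the background coupling is controlled through $W^{3,\infty}$/$W^{4,\infty}$ norms of $(\overline{v},\overline{b})$, and the resulting differential inequality $\frac{d}{dt}E+\nu D\lesssim (\ \cdot\ )E+\sqrt{E}\,D$ is identical to the paper's \eqref{perturbed H3dot}--\eqref{perturbed L2} combination. The only cosmetic difference is that you close via a bootstrap/continuity argument where the paper invokes the ready-made Gr\"onwall-type Lemma \ref{Inequality Lemma 2} of Danchin--Tan; these are equivalent.
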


The smallness assumption of $(v_0,b_0)$ in Proposition \ref{Prop perturbed gwp} is necessary to find $(v,b)$ globally-in-time. On the other hand, if we choose $(v_{0},b_{0})\in L^{2}$, we can derive a weak solution satisfying
\eqn \label{perturbed weak-ineq}
\left\|v(t)\right\|^{2}_{L^{2}}+\left\|b(t)\right\|^{2}_{L^{2}}+\nu\int^{t}_{0}\left(\left\|\nabla v(\tau)\right\|^{2}_{L^{2}}+\left\|\nabla b(\tau)\right\|^{2}_{L^{2}}\right)d\tau\leq \left(\left\|v_{0}\right\|^{2}_{L^{2}}+\left\|b_{0}\right\|^{2}_{L^{2}}\right)\exp\left[\frac{C_{0}}{\nu}\right]
\een
without assuming the smallness condition of $(v_{0},b_{0})$ in $L^{2}$, where $C_{0}$ comes from
\[
\int^{\infty}_{0}\left(\norm{\overline{v}(\tau)}{L^{\infty}}^{2}+\norm{\overline{b}(\tau)}{W^{1,\infty}}^{2}\right)\,d\tau\leq C_{0}.
\]

We can also derive temporal decay rates of $(v,b)$ if we impose temporal decay rates of $(\overline{v},\overline{b})$.

\begin{proposition}\upshape \label{Prop perturbed decay}
Under the assumptions of Proposition \ref{Prop perturbed gwp}, if $(\overline{v},\overline{b})$ additionally satisfies
\eqn \label{perturbed decay condition}
\|\overline{v}(t)\|^2_{W^{3,\infty}}+\|\overline{b}(t)\|^2_{W^{4,\infty}}\leq C(1+t)^{-4}
\een
for all $t\geq0$ for some $C>0$, then $(v,b)$ decays in time as follows. For $k\in(0,3]$, there exists $C_k,\overline{C}>0$ such that
\begin{subequations}
\begin{align}
&\left\|v(t)\right\|^{2}_{\dot{H}^{k}}+\left\|b(t)\right\|^{2}_{\dot{H}^{k}} \leq C_k (1+t)^{-k}, \label{perturbed decay1} \\
&\left\|v(t)\right\|_{L^{\infty}}+\left\|b(t)\right\|_{L^{\infty}} \leq \overline{C}(1+t)^{-\frac{3}{4}} \label{perturbed decay2}
\end{align}
\end{subequations}
for all $t\ge 0$.
\end{proposition}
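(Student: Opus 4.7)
The plan is to derive \eqref{perturbed decay1}-\eqref{perturbed decay2} from the uniform $H^3$ bound \eqref{perturbed eq bounds} via time-weighted $\dot H^k$ energy estimates combined with Schonbek's Fourier splitting. I would first bootstrap from $L^2$ boundedness up through $\dot H^3$ decay at the integer levels $k=1,2,3$, then cover the non-integer range $k\in(0,3]$ by interpolation, and finally derive the $L^\infty$ rate from Gagliardo-Nirenberg.

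Concretely, at each level $k\in\{1,2,3\}$, applying $\nabla^k$ to \eqref{eq:perturbed-HMHD}, pairing with $(\nabla^k v,\nabla^k b)$ in $L^2$, and using standard Moser/commutator estimates (together with the identity $\nabla\times((\nabla\times b)\times\overline{b})=(\overline{b}\cdot\nabla)\nabla\times b-((\nabla\times b)\cdot\nabla)\overline{b}$ to unwind the Hall coupling), I would obtain
\[
\frac{d}{dt}E_k+\nu D_{k+1}\le C_{0}\|(v,b)\|_{H^3}\,D_{k+1}+C\bigl(\|\overline{v}\|_{W^{k+1,\infty}}^2+\|\overline{b}\|_{W^{k+2,\infty}}^2\bigr)\bigl(E_k+\|(v,b)\|_{L^2}^2\bigr),
\]
with $E_k:=\|v\|_{\dot H^k}^2+\|b\|_{\dot H^k}^2$ and $D_{k+1}:=\|v\|_{\dot H^{k+1}}^2+\|b\|_{\dot H^{k+1}}^2$. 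The smallness supplied by \eqref{perturbed eq bounds} absorbs the first term on the right into the dissipation, while hypothesis \eqref{perturbed decay condition} and the bounded $L^2$ norm turn the second into an integrable forcing of order $(1+t)^{-4}$.

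To extract algebraic decay, I would apply the Fourier splitting: for any $R(t)>0$,
\[
D_{k+1}(t)\ge R(t)^2\bigl(E_k(t)-R(t)^{2k}\|(v,b)(t)\|_{L^2}^2\bigr),
\]
so choosing $R(t)^2=\alpha/[\nu(1+t)]$ with $\alpha>k$ and multiplying the resulting energy inequality by $(1+t)^\alpha$ yields
\[
\frac{d}{dt}\bigl[(1+t)^\alpha E_k(t)\bigr]\le C(1+t)^{\alpha-k-1}+C(1+t)^{\alpha-4}\bigl(1+E_k(t)\bigr).
\]
A Gronwall step, using that $(1+t)^{-4}$ is integrable, gives $E_k(t)\le C_k(1+t)^{-k}$ for $k=1,2,3$; the fractional range $k\in(0,3]$ then follows from the interpolation $\|f\|_{\dot H^k}^2\le\|f\|_{\dot H^{k_0}}^{2(1-\theta)}\|f\|_{\dot H^{k_1}}^{2\theta}$, proving \eqref{perturbed decay1}. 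Finally, \eqref{perturbed decay2} comes from the three-dimensional Gagliardo-Nirenberg inequality $\|f\|_{L^\infty}\lesssim\|f\|_{L^2}^{1/4}\|f\|_{\dot H^2}^{3/4}$ applied to both $v$ and $b$, combining the uniformly bounded $L^2$ norms with the $\dot H^2$ rate $(1+t)^{-1}$ obtained from \eqref{perturbed decay1}.

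The principal technical difficulty is the third-order Hall perturbation $\nabla\times((\nabla\times b)\times\overline{b})$ at the $\dot H^3$ level: a naive estimate would demand $H^5$ control of $b$, which is unavailable. Circumventing this requires an additional integration by parts against the outer curl and a careful redistribution of derivatives between $b$ and $\overline{b}$ (as in \cite{Chae Degond Liu}), which is precisely what forces the slightly elevated regularity index $W^{4,\infty}$ on $\overline{b}$ in \eqref{perturbed decay condition}.
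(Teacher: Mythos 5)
Your argument is correct in substance, but it reaches the decay rates by a genuinely different mechanism than the paper. The paper works only at the top level: it combines \eqref{perturbed H3dot} with the smallness from \eqref{perturbed eq bounds} to obtain $\frac{d}{dt}\big(\norm{v}{\dot{H}^3}^2+\norm{b}{\dot{H}^3}^2\big)+\frac{\nu}{2}\big(\norm{\nabla v}{\dot{H}^3}^2+\norm{\nabla b}{\dot{H}^3}^2\big)\leq K_1(1+t)^{-4}$, then uses the interpolation $\norm{v}{\dot{H}^3}^2\leq\norm{v}{L^2}^{1/2}\norm{\nabla v}{\dot{H}^3}^{3/2}$ together with the bounded $L^2$ norm to convert the dissipation into the nonlinear damping $K_0 E_3^{4/3}$, and invokes the ODE lemma (Lemma \ref{Decay lemma}) to get $E_3\leq C(1+t)^{-3}$; the range $k\in(0,3)$ then follows by interpolating against $L^2$ exactly as you propose. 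Your Fourier-splitting/Gronwall scheme replaces that nonlinear ODE lemma by a linear one and is equally valid; note, though, that the splitting inequality uses only the boundedness of $\norm{(v,b)}{L^2}$, so it can be run directly at $k=3$ and your bootstrap through $k=1,2$ (which would force you to rederive the energy inequalities at those levels) is unnecessary. Two small cautions. First, your displayed differential inequality carries $\norm{\overline{v}}{W^{k+1,\infty}}^2+\norm{\overline{b}}{W^{k+2,\infty}}^2$, which at $k=3$ exceeds the $W^{3,\infty}\times W^{4,\infty}$ regularity granted by \eqref{perturbed decay condition}; with the bookkeeping used to derive \eqref{perturbed H3dot} (write the transport terms in divergence form and let the extra derivative fall on the dissipated factor $\nabla^3\nabla v$), only $W^{3,\infty}\times W^{4,\infty}$ is needed, so you should simply quote that inequality rather than restate it with shifted indices. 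Second, your $L^\infty$ step via $\norm{f}{L^\infty}\lesssim\norm{f}{L^2}^{1/4}\norm{f}{\dot{H}^2}^{3/4}$ is legitimate (the exceptional case of Gagliardo--Nirenberg is not triggered since $2-0-3/2$ is not an integer) and is arguably cleaner than the paper's route through the logarithmic inequality of Lemma \ref{log-ineq lemma} at the $\dot{H}^{3/2}$ level, which in principle introduces a logarithmic correction that has to be absorbed.
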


Since the proof of Theorem \ref{thm:stability1} is based on the existence of time-dependent double Beltrami states in Theorem \ref{Theorem 2.1}, it is necessary to suppose $\nu=\eta>0$. To extend the stability result into more physically reasonable assumption $\nu\sim\eta$ \cite[Remark 2.3]{He Huang Wang}, we use the quantities $\Phi=B+\omega-\alpha u$ and $\Psi=u-J+\beta B$ measuring how close solutions are from the structure of \eqref{eq:HMHD-Beltrami}. More precisely, we take the smallness condition of $(\Phi_{0}, \Psi_{0})$ to show the global-in-time well-posedness of solutions to \eqref{vH MHD}, so $(u_{0}, B_{0})$ can be arbitrarily large. The following statement holds for both $\mathbb{T}^3$ and $\mathbb{R}^3$, but we state it only for $\mathbb{R}^3$.

\begin{theorem}\label{Theorem 2.3} \upshape
Let $\nu,\eta>0$ satisfy $16|\nu-\eta|\leq \nu+\eta$ and $\mu=\frac{\nu+\eta}{2}$. Let $(u_{0},B_{0})\in H^{3}(\mathbb{R}^3)$ with $\dv u_{0}=\dv B_{0}=0$ and $\alpha, \beta \in \mathbb{R}$  with $1+\alpha\beta\neq0$. If $(u_{0}, B_{0})$ satisfies
\[
\left(\norm{\Phi_{0}}{H^{\frac{1}{2}}}^{2}+\norm{\Psi_{0}}{H^{\frac{1}{2}}}^{2}+C_3\mathcal{E}_0\frac{(\nu-\eta)^2}{(\nu+\eta)^2}\right)\exp{\left(\frac{C_1\mathcal{E}_0}{\mu^2}+\frac{C_1\mathcal{E}_0^2}{\mu^4}\right)}<\frac{\mu^2}{C_2^2},
\]
then there exists a global unique solution $(u,B)\in C([0,\infty); H^{3})$ of (\ref{vH MHD}) satisfying
\[
\norm{\Phi(t)}{H^{\frac{1}{2}}}^{2}+\norm{\Psi(t)}{H^{\frac{1}{2}}}^{2}+\frac{\mu}{2}\int^{t}_{0} \left(\norm{\nabla \Phi(\tau)}{H^{\frac{1}{2}}}^{2}+\norm{\nabla \Psi(\tau)}{H^{\frac{1}{2}}}^{2}\right)d\tau <\frac{\mu^2}{C_2^2}
\]
for all $t>0$. Here, $\mathcal{E}_0=\norm{u_0}{L^2}^2+\norm{B_0}{L^2}^2$ and $C_1, C_2, C_3$, fixed in (\ref{eq:4.7}), depend on $\alpha,\beta$. %In particular, there exists a unique global solution $(u,B)\in C([0,\infty); H^{3})$ of (\ref{vH MHD}).
\end{theorem}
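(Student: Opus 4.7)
The plan is to treat \eqref{vH MHD} as a perturbation of the double Beltrami structure by tracking the quantities $\Phi=B+\omega-\alpha u$ and $\Psi=u-J+\beta B$, which vanish precisely on double Beltrami states. Local well-posedness of $(u,B)\in C([0,T_*);H^3)$ is standard for Hall MHD, so the main task is to propagate the smallness of $(\Phi,\Psi)$ in $H^{1/2}$. A standard $L^2$ energy identity for \eqref{vH MHD} yields $\norm{u(t)}{L^2}^{2}+\norm{B(t)}{L^2}^{2}+2\int_0^t(\nu\norm{\nabla u}{L^2}^{2}+\eta\norm{\nabla B}{L^2}^{2})\,d\tau\leq\mathcal{E}_0$, which is the key bound that lets us control the ambient fields $u,B$ in terms of $\mathcal{E}_0$ alone.

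Next, I would derive evolution equations for $\Phi,\Psi$ by mimicking the derivation in the proof of Theorem \ref{Theorem 2.1}, but splitting $\nu\Delta u=\mu\Delta u+\tfrac{\nu-\eta}{2}\Delta u$ and $\eta\Delta B=\mu\Delta B-\tfrac{\nu-\eta}{2}\Delta B$. Using the cancellations $(u-J)\times B=\Psi\times B$ and $(B+\omega)\times u=\Phi\times u$, one arrives schematically at $\Phi_t-\mu\Delta\Phi=\tfrac{\nu-\eta}{2}\Delta(\Phi-2B)-\nabla\times(\Phi\times u)+\alpha\Psi\times B+\alpha\Phi\times u+\alpha\nabla\bar p$ and $\Psi_t-\mu\Delta\Psi=\tfrac{\nu-\eta}{2}\Delta(2u-\Psi)-\Psi\times B-\Phi\times u-\nabla\bar p-\nabla\times\nabla\times(\Psi\times B)+\beta\nabla\times(\Psi\times B)$. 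The crucial structural point is that the only sources forcing $(\Phi,\Psi)$ away from zero are the imbalance terms of size $|\nu-\eta|$ acting on $u,B$ together with bilinear interactions between $(\Phi,\Psi)$ and the background.

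The heart of the proof is an $H^{1/2}$ energy estimate on the $(\Phi,\Psi)$ system. Taking the $H^{1/2}$ inner product of the $\Phi$ and $\Psi$ equations with $\Phi$ and $\Psi$ respectively, the hypothesis $16|\nu-\eta|\le\nu+\eta$ lets me absorb the diagonal contributions $\tfrac{\nu-\eta}{2}\Delta\Phi,\,-\tfrac{\nu-\eta}{2}\Delta\Psi$ into the coercive dissipation $\mu\norm{\nabla\Phi}{H^{1/2}}^{2}+\mu\norm{\nabla\Psi}{H^{1/2}}^{2}$. The remaining imbalance forcings $(\nu-\eta)\Delta B$ and $(\nu-\eta)\Delta u$ are handled by Cauchy--Schwarz in the form $|\la(\nu-\eta)\nabla B,\nabla\Phi\ra_{H^{1/2}}|\le\tfrac{\mu}{8}\norm{\nabla\Phi}{H^{1/2}}^{2}+\tfrac{C(\nu-\eta)^{2}}{\mu}\norm{\nabla B}{H^{1/2}}^{2}$, and integrating the second factor in time against the basic energy dissipation bound produces a deterministic driver of size $C_3\mathcal{E}_0(\nu-\eta)^{2}/(\nu+\eta)^{2}$, which accounts for the corresponding term in the hypothesis. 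The bilinear nonlinearities $\Phi\times u,\,\Psi\times B,\,\nabla\times(\Psi\times B)$ are estimated by Kato--Ponce type product rules together with the embedding $H^{1/2}(\mathbb{R}^{3})\hookrightarrow L^{3}$, yielding contributions of the form $\tfrac{C}{\mu}(\norm{u}{L^2}^{2}+\norm{B}{L^2}^{2})(\norm{\nabla\Phi}{H^{1/2}}^{2}+\norm{\nabla\Psi}{H^{1/2}}^{2})$ after Young's inequality. Gr\"onwall's inequality then reproduces the factor $\exp(C_1\mathcal{E}_0/\mu^{2}+C_1\mathcal{E}_0^{2}/\mu^{4})$ in the statement. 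The most delicate term is the twice-differentiated Hall contribution $\nabla\times\nabla\times(\Psi\times B)$, which is scaling-critical in $H^{1/2}$; one must show that it can be absorbed into the parabolic regularization using only the $L^2$ control of $B$ and the dissipation $\mu\norm{\nabla\Psi}{H^{1/2}}^{2}$, exactly as in the critical theory developed in \cite{Chae Degond Liu}.

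Finally, a continuity/bootstrap argument closes the proof. Set $\mathcal{M}(t)=\norm{\Phi(t)}{H^{1/2}}^{2}+\norm{\Psi(t)}{H^{1/2}}^{2}+\tfrac{\mu}{2}\int_0^t(\norm{\nabla\Phi}{H^{1/2}}^{2}+\norm{\nabla\Psi}{H^{1/2}}^{2})\,d\tau$ and define $T^{*}=\sup\{t\in[0,T_*):\mathcal{M}(t)<\mu^{2}/C_2^{2}\}$. The energy estimate above shows that under the initial hypothesis $\mathcal{M}$ stays strictly below $\mu^{2}/C_2^{2}$, so $T^{*}=T_*$, and this $H^{1/2}$ smallness together with the $L^{2}$ energy bound on $(u,B)$ feeds into the standard $H^{3}$ continuation criterion for Hall MHD to yield $T_*=\infty$. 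The main obstacle throughout is precisely the Hall quadratic term $\nabla\times\nabla\times(\Psi\times B)$: it is the only source of derivative loss in the Hall MHD system, and the smallness of $\mathcal{M}$ (chosen at the critical level $H^{1/2}$) is exactly what is needed to keep it subordinate to the $\mu$-dissipation.
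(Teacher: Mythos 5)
Your overall strategy coincides with the paper's: the same unknowns $\Phi,\Psi$, the same splitting $\nu\Delta=\mu\Delta+\tfrac{\nu-\eta}{2}\Delta$, an $L^2$ plus $\dot H^{1/2}$ energy estimate closed by a Gr\"onwall/bootstrap lemma, and continuation via a blow-up criterion. However, there is a genuine gap in how you close the bilinear estimates, and it sits exactly at the point where the hypothesis $1+\alpha\beta\neq0$ --- which your argument never invokes --- must enter. You claim the bilinear terms reduce to $\tfrac{C}{\mu}(\|u\|_{L^2}^2+\|B\|_{L^2}^2)(\|\nabla\Phi\|_{H^{1/2}}^2+\|\nabla\Psi\|_{H^{1/2}}^2)$. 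A term of that form can only be absorbed into the dissipation $\mu(\|\nabla\Phi\|_{H^{1/2}}^2+\|\nabla\Psi\|_{H^{1/2}}^2)$ if $\mathcal{E}_0\lesssim\mu^2$, a smallness condition on the initial data that the theorem does not assume; and it cannot be fed into Gr\"onwall to produce the factor $\exp(C_1\mathcal{E}_0/\mu^2+C_1\mathcal{E}_0^2/\mu^4)$, since that requires a time-integrable weight multiplying the \emph{undifferentiated} quantity $\|\Phi\|_{H^{1/2}}^2+\|\Psi\|_{H^{1/2}}^2$. The only time-integrable weight available from the energy inequality is $\int_0^\infty(\|\nabla u\|_{L^2}^2+\|\nabla B\|_{L^2}^2)\,d\tau\lesssim\mathcal{E}_0/\mu$, so every bilinear term must be arranged so that at least one derivative lands on the background fields. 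For the zeroth-order term $\alpha\int(\Psi\times B)\cdot\Phi-\int(\Phi\times u)\cdot\Psi$ in the $L^2$ estimate this is impossible as written, because $u$ and $B$ appear underived; the paper resolves it by inverting the linear relations to write $u=(\Psi-\beta\Phi+J+\beta\omega)/(1+\alpha\beta)$ and $B=(\Phi+\alpha\Psi-\omega+\alpha J)/(1+\alpha\beta)$, after which the pure $\Phi,\Psi$ contributions vanish by antisymmetry of the cross product and every surviving piece carries $\omega$ or $J$. That inversion is the only place $1+\alpha\beta\neq0$ is used, and without it the estimate does not close for large $\mathcal{E}_0$.

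Two smaller remarks. First, the genuinely critical quadratic contributions --- e.g.\ the commutator coming from $\nabla\times\nabla\times(\Psi\times B)$, for which one must also rewrite $\|\nabla J\|_{L^2}\leq\|\nabla\Psi\|_{L^2}+\|\nabla u\|_{L^2}+|\beta|\|\nabla B\|_{L^2}$ --- end up as $C(\|\Phi\|_{\dot H^{1/2}}+\|\Psi\|_{\dot H^{1/2}})$ times the dissipation, i.e.\ the $C_0X^{\lambda}D$ term in the Danchin--Tan Gr\"onwall lemma; this is precisely where the threshold $\mu^2/C_2^2$ originates, and your bootstrap paragraph does capture that mechanism. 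Second, for the continuation step the paper does not go directly to an $H^3$ criterion: it bounds $\int_0^{T_{\max}}(\|u\|_{\mathrm{BMO}}^2+\|J\|_{\mathrm{BMO}}^2)\,d\tau$ through $\dot H^{3/2}\hookrightarrow\mathrm{BMO}$, using the $L^2$ energy bound together with the $H^{1/2}$ control of $\Phi,\Psi$ to control $\omega$ and $J$, and then invokes the Chae--Lee blow-up criterion; your sketch of this step is consistent but should be made explicit along these lines.
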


\begin{remark}\upshape
We use the condition $1+\alpha\beta\neq0$ only  in (\ref{I 1}) during the proof of Theorem \ref{Theorem 2.3}. If we take $\Omega=\mathbb{T}^{3}$ with mean-zero condition of $(u_0,B_0)$, we can rule out the condition $1+\alpha\beta\neq0$. Since the mean-zero condition to $(u_{0}, B_{0})$ implies the mean-zero condition to $(u, B)$, we can use the Poincar\'e inequality to bound $\text{I}_{1}$ in (\ref{L2 bound phi psi}) as follows: 
\[
\text{I}_{1}\leq C\left(\norm{\nabla u}{L^{2}}+\norm{\nabla B}{L^{2}}\right)\norm{\Psi}{L^{3}}\norm{\Phi}{L^{6}}.
\]
Moreover, once we get the global existence in $H^3(\mathbb{T}^3)$, it is easy to check the exponential decay of $\left\|(u,B)(t)\right\|_{H^3}$ by using Poincar\'e inequality.
\end{remark}

Before closing this section, we fix some notations. All  constants will be denoted by $C$ and we follow the convention that such constants can vary from expression to expression and even between two occurrences within the same expression.  In Section \ref{sec:3}-\ref{sec:5}, we  use the simplified form of the integral of the spatial variables:
\[
\int=:\int_{\mathbb{R}^3}dx.
\]
For two vector fields $V$ and $W$ in $\mathbb{R}^{3}$, we define a matrix $V\otimes W$ whose $(i,j)$-the entry {is} $V_{i}W_{j}$.

%%%%%%%%%%%%%%%%%%%%%%%%%%%%%%
\section{Proof of Proposition \ref{Prop:2.3} }\label{sec:3}
%%%%%%%%%%%%%%%%%%%%%%%%%%%%
In this section, we prove Proposition \ref{Prop:2.3} using the Littlewood-Paley decomposition and Besov spaces in $\mathbb{R}^{3}$. We can also define the Littlewood-Paley theory on $\mathbb{T}^{3}$ \cite{Dai Hu Wu Xiao}, but we choose $\mathbb{R}^{3}$ because this theory is more familiar in $\mathbb{R}^{3}$ than $\mathbb{T}^{3}$.

\subsection{Littlewood-Paley decomposition}
Let $\mathcal{C}=\left\{\xi\in \mathbb{R}^{d}: \frac{3}{4}\leq |\xi|\leq \frac{8}{3}\right\}$ and take smooth radial functions $\chi$ and $\Phi$ with values in $[0,1]$ supported on $B(0,\frac{4}{3})$ and $\mathcal{C}$ respectively, and satisfy
\[
\chi(\xi)+\sum^{\infty}_{q=0}\Phi\left(\lambda_q^{-1}\xi\right)=1 \quad  \forall \ \xi \in \mathbb{R}^{d},
\]
where $\lambda_q=2^q$ and  $\Phi(\xi)=\chi(\xi/2)-\chi(\xi)$.
Using this, we define dyadic blocks:
\begin{align*}
&\Delta_{q}f=\lambda_q^{d} \int_{\mathbb{R}^{d}} h\left(\lambda_q y\right)f(x-y)\,dy, \quad h=\mathcal{F}^{-1}\Phi\quad q\geq0,\\
&\Delta_{-1}f=\int_{\mathbb{R}^{d}} \widetilde{h}(y))f(x-y)\,dy, \quad \widetilde{h}=\mathcal{F}^{-1}\chi.
\end{align*}
Then, the inhomogeneous Littlewood-Paley decomposition is given by
\[
f=\sum_{q=-1}^{\infty} \Delta_{q}f \ \  \text{in} \ \  \mathcal{S}^{'},
\]
where $\mathcal{S}^{'}$ is the space of tempered distributions. For $Q\geq -1$, we define
\[
S_Q f=\sum^{Q}_{q=-1}\Delta_q f=\int_{\mathbb{R}^{d}} \widetilde{h}_Q\left(y\right)f(x-y)\,dy, \quad \widetilde{h}_{Q}(x)=\lambda_Q^d \widetilde{h}(\lambda_Q x).
\]
We recall two properties of the Littlewood-Paley decomposition: 
when $f\in L^p$ 
\begin{itemize}
\item for $1<p<\infty$
\begin{equation}\label{dyadic approx in Lp}
\norm{S_Q f-f}{L^p}\rightarrow 0 \quad\text{as}\quad Q\rightarrow\infty,
\end{equation}
\item for $1\leq p\leq \infty$
\eqn \label{Bernstein ineq}
\|\nabla\Delta_q f\|_{L^p}\leq C\lambda_q \|\Delta_q f\|_{L^p}.
\een
\end{itemize}

%We note that for $f\in L^p$ for $1<p<\infty$
%\begin{equation}\label{dyadic approx in Lp}
%\norm{S_Q f-f}{L^p}\rightarrow 0 \quad\text{as}\quad Q\rightarrow\infty.
%\end{equation}

\begin{definition} \label{Definition 3.1}\upshape
We define $B^\alpha_{3,c(\mathbb{N})}$ to be the class of tempered distributions for which
\[
\lim_{q\rightarrow\infty}\lambda_q^\alpha\norm{\Delta_q f}{L^3}=0,
\]
and with the norm inherited from $B^\alpha_{3,\infty}$.
\end{definition}

%%%%%%%%%%%%%%%%%%%%%%%%%%
\subsection{Proof of Proposition \ref{Prop:2.3}}
%%%%%%%%%%%%%%%%%%%%%%%%%%
Let
\begin{align*}
\text{the energy}:\quad & \mathcal{E}(t)=\frac{1}{2}\int \left(|u(t,x)|^2+|B(t,x)|^2\right)\\
\text{the magnetic helicity}:\quad & \mathcal{H}_B(t)=\int (A\cdot B)(t,x) \\
\text{the magneto-vorticity helicity}:\quad & \mathcal{H}_{B+\omega}(t)=\int \left[(A+u)\cdot(B+\omega)\right](t,x),
\end{align*}
where $\nabla\times A =B$ and $\dv A=0$. From \eqref{Ideal Hall MHD}, we {derive} the following equations:
\begin{subequations} \label{ideal Hall 2}
\begin{align}
& A_{t} +B\times u +(\nabla\times B)\times B+\nabla q=0, \label{ideal Hall d}\\
& (A+u)_{t}+(B+\omega)\times u+\nabla(\overline{p}+q)=0, \label{ideal Hall e}\\
& (B+\omega)_{t}+\nabla\times((B+\omega)\times u)=0. \label{ideal Hall f}
\end{align}
\end{subequations}

We follow the method of \cite{CCFS08}, which is also applied to the ideal MHD in \cite{Kang Lee}, and the electron MHD in \cite{Dai 4}. We first introduce the localization kernels
\[
K_1(q)=
\begin{cases}
\lambda_q^{\frac{2}{3}}\quad q\leq 0 \\
\lambda_q^{-\frac{4}{3}}\quad q>0,
\end{cases} \ K_2(q)=
\begin{cases}
\lambda_q^{\frac{4}{3}}\quad q\leq 0 \\
\lambda_q^{-\frac{2}{3}}\quad q>0,
\end{cases}
\]
and define
\[
b_q=\lambda_q^{\frac{1}{3}}\norm{\Delta_q B}{L^3},\quad d_q=\lambda_q^{\frac{1}{3}}\norm{\Delta_q u}{L^3},\quad e_q=\lambda_q^{\frac{2}{3}}\norm{\Delta_q B}{L^3},
\]
and 
\[
b^2=\{b_q^2\}_{q\geq-1},\quad d^2=\{d_q^2\}_{q\geq-1},\quad e^2=\{e_q^2\}_{q\geq-1}
\]
which will be used to prove (2) and (3) of Proposition \ref{Prop:2.3}. {We omit all generic constants $C>0$ in this section for simplicity.}

\vspace{1ex}

\noindent
$\blacktriangleright$ From \eqref{ideal Hall e} and \eqref{ideal Hall f}, we deal with the truncated magneto-vorticity helicity $\mathcal{H}_{B+\omega}^Q$ as
\[
\frac{d}{dt}\mathcal{H}_{B+\omega}^{Q}=\frac{d}{dt}\int S_Q(A+u)\cdot S_Q(B+\omega)=-2\int S_Q((B+\omega)\times u)\cdot S_Q(B+\omega).
\]
Then
\begin{align*}
&\int S_Q((B+\omega)\times u)\cdot S_Q(B+\omega) \\
&=\int\left[S_Q((B+\omega)\times u)-(B+\omega)\times u\right]\cdot S_Q(B+\omega) +\int ((B+\omega)\times u)\cdot \left[S_Q(B+\omega)-(B+\omega)\right] \\
&\leq \norm{S_Q((B+\omega)\times u)-(B+\omega)\times u}{L^{3/2}}\norm{S_Q(B+\omega)}{L^3}+\norm{(B+\omega)\times u}{L^{3/2}}\norm{S_Q(B+\omega)-(B+\omega)}{L^3}
\end{align*}
converges to zero as $Q\rightarrow\infty$ by using \eqref{dyadic approx in Lp} with $u, B+\omega \in L^3([0,T];L^3)$.
Hence 
\[
\frac{d}{dt}\mathcal{H}_{B+\omega}^{Q}\rightarrow 0\quad \text{as}\quad Q\rightarrow\infty,
\]
which completes the proof of Proposition \ref{Prop:2.3} (1).

\vspace{1ex}

\noindent
$\blacktriangleright$ Similarly, we consider the truncated magnetic helicity $\mathcal{H}_B^Q$ as
\begin{align*}
\frac{d}{dt}\mathcal{H}_B^Q=\frac{d}{dt}\int S_Q A\cdot S_Q B &=2\int S_Q(u\times B)\cdot S_Q B-2\int S_Q((\nabla\times B)\times B)\cdot S_Q B=2\text{I}(Q)+2\text{II}(Q).
\end{align*}
As in the proof of Proposition \ref{Prop:2.3} (1), $\text{I}(Q)$ vanishes to zero as $Q\rightarrow\infty$: for $u, B \in L^3([0,T];L^3)$
\begin{align*}
\text{I}(Q)&=\int\left[S_Q(u\times B)-u\times B\right]\cdot S_Q B+\int(u\times B)\cdot(S_Q B-B) \\
&\leq \norm{S_Q(u\times B)-u\times B}{L^{3/2}}\norm{S_Q B}{L^3}+\norm{u\times B}{L^{3/2}}\norm{S_Q B-B}{L^3}\rightarrow 0.
\end{align*}
Next we compute $\text{II}(Q)$ as
\begin{align*}
\text{II}(Q)&=-\int S_Q((\nabla\times B)\times B)\cdot S_Q B \\
&=-\int S_Q(\nabla\cdot(B\otimes B)-\frac{1}{2}\nabla |B|^2)\cdot S_Q B =\int S_Q(B\otimes B):\nabla S_Q B.
\end{align*}
We now divide $S_Q (B\otimes B)$ into 3 terms following \cite{CCFS08}:
\begin{equation}\label{division of quadratic}
\begin{split}
& S_Q(f\otimes g)=r_Q(f,g)-(f-S_Q f)\otimes(g-S_Q g)+S_Q f\otimes S_Q g, \\
& r_Q(f,g)=\int_{\mathbb{R}^3} \widetilde{h}_{Q}(y)(f(x-y)-f(x))\otimes(g(x-y)-g(x))\,dy.
\end{split}
\end{equation}
From this, 
\[
\text{II}(Q)=\int r_Q(B,B):\nabla S_Q B-\int(B-S_Q B)\otimes(B-S_Q B):\nabla S_Q B=\text{II}_1(Q)+\text{II}_2(Q),
\]
where we remove
\[
\int \left(S_Q B\otimes S_Q B\right):\nabla S_Q B=0.
\]
We first bound $\text{II}_1(Q)$ as
\[
\text{II}_1(Q)\leq \norm{r_Q(B,B)}{L^{3/2}}\norm{\nabla S_Q B}{L^3}
\]
and 
\[
\norm{r_Q(B,B)}{L^{3/2}}\leq \int_{\mathbb{R}^3} \left|\widetilde{h}_Q(y)\right|\norm{B(\cdot-y)-B(\cdot)}{L^3}^2\,dy.
\]
{We use \eqref{Bernstein ineq} to get}
\begin{align*}
\norm{B(\cdot-y)-B(\cdot)}{L^3}^2 &\leq \sum_{q\leq Q}|y|^2\lambda_q^2\norm{\Delta_q B}{L^3}^2+\sum_{q>Q}\norm{\Delta_q B}{L^3}^2 \\
&\leq\lambda_Q^{\frac{4}{3}}|y|^2\sum_{q\leq Q}\lambda_{Q-q}^{-\frac{4}{3}}\left(\lambda_q^{\frac{1}{3}}\norm{\Delta_q B}{L^3}\right)^2+\lambda_Q^{-\frac{2}{3}}\sum_{q>Q}\lambda_{Q-q}^{\frac{2}{3}}\left(\lambda_q^{\frac{1}{3}}\norm{\Delta_q B}{L^3}\right)^2 \\
&\leq \left(\lambda_Q^{\frac{4}{3}}|y|^2+\lambda_Q^{-\frac{2}{3}}\right)\left(K_1\ast {b^2}\right)(Q).
\end{align*}
{Hence, we arrive at}
\begin{align*}
\text{II}_1(Q)&\leq \left(K_1\ast {b^2}\right)(Q)\int_{\mathbb{R}^3}\left|\widetilde{h}_Q(y)\right|\left(\lambda_Q^{\frac{4}{3}}|y|^2+\lambda_Q^{-\frac{2}{3}}\right)\,dy\, \norm{\nabla S_Q B}{L^3} \\
&\leq \left(K_1\ast {b^2}\right)(Q) \lambda_Q^{-\frac{2}{3}}\Bigg(\sum_{q\leq Q}\lambda_q^2\norm{\Delta_q B}{L^3}^2\Bigg)^{\frac{1}{2}} \leq \left(K_1\ast {b^2}\right)^{\frac{3}{2}}(Q).
\end{align*}
{Similarly, $\text{II}_2(Q)$ is estimated as}
\begin{align*}
\text{II}_2(Q)&\leq \norm{B-S_Q B}{L^3}^2\norm{\nabla S_Q B}{L^3} \leq \Bigg(\sum_{q>Q}\norm{\Delta_q B}{L^3}^2 \Bigg) \Bigg(\sum_{q\leq Q}\lambda_q^2\norm{\Delta_q B}{L^3}^2 \Bigg)^{\frac{1}{2}} \\
&=\Bigg(\sum_{q>Q}\lambda_{Q-q}^{\frac{2}{3}}b_q^2 \Bigg) \Bigg(\sum_{q\leq Q}\lambda_{Q-q}^{-\frac{4}{3}}b_q^2 \Bigg)^{\frac{1}{2}}\leq \left(K_1\ast {b^2}\right)^{\frac{3}{2}}(Q).
\end{align*}
When $B\in L^3([0,T];B^{\frac{1}{3}}_{3,c(\mathbb{N})})$, $\text{II}(Q)$ vanishes as $Q\rightarrow\infty$. Therefore, $\mathcal{H}_{B}$ is conserved in time.

\vspace{1ex}

\noindent
$\blacktriangleright$ We finally prove the conservation of the energy $\mathcal{E}$.
\begin{align*}
\frac{d}{dt}\mathcal{E}^Q =&\frac{1}{2}\frac{d}{dt}\int |S_Q u|^2+|S_Q B|^2 \\
=& \left(-\int S_Q\nabla\cdot(u\otimes u-B\otimes B)\cdot S_Q u +\int S_Q\nabla\cdot(B\otimes u-u\otimes B)\cdot S_Q B\right)\\
&-\int S_Q((\nabla\times B)\times B)\cdot S_Q(\nabla\times B) =\text{III}(Q)+\text{IV}(Q).
\end{align*}
By dividing $S_Q(f\otimes g)$ as in \eqref{division of quadratic}, we write
\begin{align*}
\text{III}(Q)=& \int \left[r_Q(u,u)-r_Q(B,B)\right]:\nabla S_Q u -\int\left[r_Q(B,u)-r_Q(u,B)\right]:\nabla S_Q B  \\
&-\int\left[(u-S_Q u)\otimes(u-S_Q u)-(B-S_Q B)\otimes(B-S_Q B)\right]:\nabla S_Q u \\
&+\int\left[(B-S_Q B)\otimes(u-S_Q u)-(u-S_Q u)\otimes(B-S_Q B)\right]:\nabla S_Q B .
\end{align*}
Following the same process to estimate $\text{II}(Q)$, we conclude
\[
\text{III}(Q)\leq \left(K_1\ast {d^2}\right)^{\frac{3}{2}}(Q)+\left(K_1\ast{d^2}\right)^{\frac{1}{2}}(Q)\left(K_1\ast {b^2}\right)(Q)\rightarrow 0
\]
as $Q\rightarrow\infty$ for $(u,B)\in L^3([0,T];B^{\frac{1}{3}}_{3,c(\mathbb{N})})$.
Again using \eqref{division of quadratic}, we estimate
\begin{align*}
\text{IV}(Q) &=\int S_Q(B\otimes B):\nabla\nabla\times S_Q B \\
&=\int r_Q(B,B):\nabla\nabla\times S_Q B-\int(B-S_Q B)\otimes(B-S_Q B):\nabla\nabla\times S_Q B =\text{IV}_1(Q)+\text{IV}_2(Q),
\end{align*}
where we use
\[
\int S_Q B\otimes S_Q B:\nabla\nabla\times S_Q B=-\int\left[(\nabla\times S_Q B)\times S_Q B+\frac{1}{2}\nabla|S_Q B|^2\right]\cdot\nabla\times S_Q B=0.
\]
Since
\[
\norm{B(\cdot-y)-B(\cdot)}{L^3}^2 \leq \sum_{q\leq Q}|y|^2\lambda_q^2\norm{\Delta_q B}{L^3}^2+\sum_{q>Q}\norm{\Delta_q B}{L^3}^2 \leq \left(\lambda_Q^{\frac{2}{3}}|y|^2+\lambda_Q^{-\frac{4}{3}}\right)\left(K_2\ast {e^2}\right)(Q),
\]
we bound $\text{IV}_1(Q)$ as
\begin{align*}
\text{IV}_1(Q) &\leq \left(K_2\ast e^2\right)(Q)\int_{\mathbb{R}^3}\left|\widetilde{h}_Q(y)\right|\left(\lambda_Q^{\frac{2}{3}}|y|^2+\lambda_Q^{-\frac{4}{3}}\right)\,dy\,\norm{\nabla\nabla\times S_Q B}{L^3} \\
&\leq \left(K_2\ast e^2\right)(Q)\lambda_Q^{-\frac{4}{3}} \Bigg(\sum_{q\leq Q}\lambda_q^4\norm{\Delta_q B}{L^3}^2 \Bigg)^{\frac{1}{2}} \\
&\leq \left(K_2\ast e^2\right)(Q) \Bigg(\sum_{q\leq Q}\lambda_{Q-q}^{-\frac{8}{3}}\left(\lambda_q^{\frac{2}{3}}\norm{\Delta_q B}{L^3}\right)^2 \Bigg)^{\frac{1}{2}}\leq \left(K_2\ast {e^2}\right)^{\frac{3}{2}}(Q).
\end{align*}
{Similarly, $\text{IV}_2(Q)$ is estimated as}
\begin{align*}
\text{IV}_2(Q) &\leq \norm{B-S_Q B}{L^3}^2\norm{\nabla\nabla S_Q B}{L^3} \leq \left(\sum_{q>Q}\norm{\Delta_q B}{L^3}^2\right) \Bigg(\sum_{q\leq Q}\lambda_q^4\norm{\Delta_q B}{L^3}^2 \Bigg)^{\frac{1}{2}} \\
&\leq \Bigg(\sum_{q>Q}\lambda_{Q-q}^{\frac{4}{3}} e_q^2 \Bigg) \Bigg(\sum_{q\leq Q}\lambda_{Q-q}^{-\frac{8}{3}} e_q^2 \Bigg)^{\frac{1}{2}}\leq \left(K_2\ast {e^2}\right)^{\frac{3}{2}}(Q).
\end{align*}
Thus, when $B\in L^3([0,T];B^{\frac{2}{3}}_{3,c(\mathbb{N})}(\mathbb{R}^3))$, $\text{IV(Q)}$ converges to zero as $Q\rightarrow0$ which completes the proof of Proposition \ref{Prop:2.3}.

%%%%%%%%%%%%%%%%%%%%%%%%%%%%%%%%%%%%%%%%%%%%%%%
\section{Proof of Proposition \ref{Prop perturbed gwp} and Proposition \ref{Prop perturbed decay}} \label{sec:4}
%%%%%%%%%%%%%%%%%%%%%%%%%%%%%%%%%%%%%%%%%%%%

%%%%%%%%%%%%%%%%
\subsection{Function spaces and some inequalities}
%%%%%%%%%%%%%%%%
Since we prove Proposition \ref{Prop perturbed gwp} and Proposition \ref{Prop perturbed decay} in this section and Theorem \ref{Theorem 2.3} in $\mathbb{R}^{3}$ in Section \ref{sec:5}, we define function spaces and some inequalities in $\mathbb{R}^{3}$.

We first introduce notation of derivatives (in any dimension greater than or equal to 2). Let $\alpha=(\alpha_{1}, \alpha_{2}, \cdots, \alpha_{d})$, $\alpha_{i}\ge0$, $i=1,2,\cdots, d$, be a multi-index of order $l\in \mathbb{N}$: $l=\alpha_{1}+\alpha_{2}+\cdots +\alpha_{d}=:|\alpha|$. Then, we define
\[
\nabla^{l}f(x)=\sum_{|\alpha|=l}\frac{\partial^{l}f(x)}{\partial^{\alpha_{1}}_{x_{1}} \partial^{\alpha_{2}}_{x_{2}} \cdots \partial^{\alpha_{d}}_{x_{d}}}, \quad x=(x_{1}, x_{2}, \cdots, x_{d}).
\]
The fractional Laplacian $\Lambda^{\gamma}=(\sqrt{-\Delta})^{\gamma}$ in $\mathbb{R}^3$ is defined by the Fourier transform representation
\[
\widehat{\Lambda^{\gamma} f}(\xi)=|\xi|^{\gamma}\widehat{f}(\xi).
\]

For $k\in \mathbb{N}$,  we define a Sobolev space $H^{k}$ equipped with the following norm
\[
\|f\|^{2}_{H^{k}}=\|f\|^{2}_{L^{2}}+\sum^{k}_{l=1}\left\|\nabla^{l} f\right\|^{2}_{L^{2}}. 
\]
When $\alpha\in \mathbb{R}_{+}$, we define the $H^{\alpha}$ norm as
\[
\|f\|^{2}_{H^{\alpha}}=\|f\|^{2}_{L^{2}}+\|f\|^{2}_{\dot{H}^{\alpha}}, \quad \|f\|_{\dot{H}^{\alpha}}=\left\|\Lambda^{\alpha}f\right\|_{L^{2}}.
\]
In the energy spaces, we have the following interpolations: for $s_{0}<s<s_{1}$
\eqn \label{energy interpolation}
\left\|f\right\|_{\dot{H}^{s}}\leq \left\|f\right\|^{\theta}_{\dot{H}^{s_{0}}}\left\|f\right\|^{1-\theta}_{\dot{H}^{s_{1}}}, \quad s=\theta s_{0}+(1-\theta)s_{1}.
\een
We also use the following inequalities \cite[Corollary 5.2]{Li19} handling the product of two functions. Let $k\in\mathbb{N}$, $1<p<\infty$, and $1<p_{1},p_{2},p_{3},p_{4}\leq\infty$ satisfy $\frac{1}{p}=\frac{1}{p_{1}}+\frac{1}{p_{2}}=\frac{1}{p_{3}}+\frac{1}{p_{4}}$. Then we have
\begin{subequations}
\begin{align}
\norm{\nabla^{k}(fg)}{L^{p}}&\leq C\left(\norm{\nabla^{k}f}{L^{p_{1}}}\norm{g}{L^{p_{2}}}+\norm{f}{L^{p_{3}}}\norm{\nabla^{k}g}{L^{p_{4}}}\right), \label{eq:product-estimate}
\\
\label{eq:commutator-estimate}
\norm{\nabla^{k}(fg)-f(\nabla^{k}g)}{L^{p}}&\leq C\left(\norm{\nabla^{k}f}{L^{p_{1}}}\norm{g}{L^{p_{2}}}+\norm{\nabla f}{L^{p_{3}}}\norm{\nabla^{k-1}g}{L^{p_{4}}}\right).
\end{align}
\end{subequations}
In the proof of Proposition \ref{Prop perturbed gwp} and Proposition \ref{Prop perturbed decay}, we use $W^{k,\infty}$, $k\in \mathbb{N}$, with the following norm
\[
\|f\|_{W^{k,\infty}}=\|f\|_{L^{\infty}}+ \sum^{k}_{l=1}\|\nabla^{l}f\|_{L^{\infty}}.
\]

\subsection{Useful Lemmas} To prove Proposition \ref{Prop perturbed gwp} in this section and Theorem \ref{Theorem 2.3} in Section \ref{sec:5}, we use the following Gr\"onwall type lemma.

\begin{lemma}\label{Inequality Lemma 2} \cite[Lemma A.4]{Danchin Tan 3}\upshape
Let $X$, $D$, $W$, and $E$ be nonnegative measurable functions such that $X$ is also differentiable. Assume  there exist two non-negative numbers $C_0$ and $\lambda>0$ such that
\[
\frac{d}{dt}X+D\leq WX+C_0 X^{\lambda}D+E
\]
on $[0,T]$. If 
\[
\left(X(0)+\int^T_0 E(\tau)\,d\tau\right)\exp{\left(\int^T_0 W(\tau)\,d\tau\right)}<\left(\frac{1}{2C_0}\right)^{1/\lambda},
\]
then for any $t\in[0,T]$, the following inequality holds
\[
X(t)+\frac{1}{2}\int^{t}_{0}D(\tau)\,d\tau \leq \left(X(0)+\int^t_0 E(\tau)\,d\tau\right)\exp{\left(\int^t_0 W(\tau)\,d\tau\right)}.
\]
\end{lemma}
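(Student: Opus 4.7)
The plan is to prove this Gronwall-type statement by a standard bootstrap (continuity) argument. The driving idea is that the offending nonlinear term $C_0 X^{\lambda}D$ on the right can be absorbed into the dissipation $D$ on the left as soon as $X$ is small enough to guarantee $C_0 X^{\lambda}\le 1/2$. The smallness hypothesis forces this to hold at $t=0$, and the bootstrap will propagate it to all of $[0,T]$.

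First, I would introduce the candidate upper bound
\[
Y(t) := \Bigl(X(0)+\int_{0}^{t}E(\tau)\,d\tau\Bigr)\exp\Bigl(\int_{0}^{t}W(\tau)\,d\tau\Bigr),
\]
and observe that $Y$ is nondecreasing (since $E,W\ge0$), so the hypothesis gives $Y(t)\le Y(T)<(1/(2C_0))^{1/\lambda}$ for every $t\in[0,T]$. In particular $X(0)=Y(0)$ satisfies $C_0 X(0)^{\lambda}<1/2$ strictly, so by continuity of $X$ the set
\[
\mathcal{T}=\sup\Bigl\{t\in[0,T]:\,C_{0}\,X(s)^{\lambda}\le 1/2\ \text{for all}\ s\in[0,t]\Bigr\}
\]
is strictly positive.

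Next, on $[0,\mathcal{T})$ the hypothesis reduces to the linear differential inequality
\[
\frac{d}{dt}X+\frac{1}{2}D\le WX+E.
\]
I would then multiply by the integrating factor $\phi(t)^{-1}:=\exp(-\int_{0}^{t}W)$, integrate from $0$ to $t$, and multiply back by $\phi(t)$ to obtain
\[
X(t)+\frac{1}{2}\int_{0}^{t}\frac{\phi(t)}{\phi(\tau)}\,D(\tau)\,d\tau\le \phi(t)X(0)+\int_{0}^{t}\frac{\phi(t)}{\phi(\tau)}\,E(\tau)\,d\tau.
\]
Since $\phi(t)/\phi(\tau)=\exp(\int_{\tau}^{t}W)\ge 1$ on the left, and $\phi(t)/\phi(\tau)\le\phi(t)$ on the right (as $\phi(\tau)\ge 1$), this yields
\[
X(t)+\frac{1}{2}\int_{0}^{t}D(\tau)\,d\tau\le Y(t)\qquad\text{for all }t\in[0,\mathcal{T}).
\]

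Finally I would close the bootstrap. The bound just proved gives $X(t)\le Y(t)\le Y(T)<(1/(2C_0))^{1/\lambda}$ for $t\in[0,\mathcal{T})$, and by continuity of $X$ the same strict inequality persists at $t=\mathcal{T}$. Hence $C_{0}X(\mathcal{T})^{\lambda}<1/2$, which contradicts the maximality of $\mathcal{T}$ unless $\mathcal{T}=T$. Therefore the desired bound holds on the whole interval $[0,T]$, completing the proof.

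The only delicate point is the bootstrap closure: one must verify that the bound obtained on $[0,\mathcal{T})$ is \emph{strict} (i.e., $C_{0}X^{\lambda}<1/2$ rather than $\le 1/2$), which is exactly what the strict inequality in the hypothesis provides. Everything else is a direct integrating-factor computation.
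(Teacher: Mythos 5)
Your proof is correct. The paper does not prove this lemma at all --- it simply cites it from Danchin--Tan [Lemma A.4], where it is established by essentially the same argument you give: a continuity/bootstrap step to propagate the smallness $C_0X^{\lambda}\le 1/2$, absorption of the term $C_0X^{\lambda}D$ into $\tfrac12 D$, and a Gr\"onwall integrating-factor computation using $\exp(\int_\tau^t W)\ge 1$ on the dissipation and $\exp(\int_\tau^t W)\le\exp(\int_0^t W)$ on the source. The one point worth tightening is the closure at the endpoint: since the condition $C_0X(s)^{\lambda}\le 1/2$ extends to $s=\mathcal{T}$ by continuity, the linear inequality and hence the bound hold on the closed interval $[0,\mathcal{T}]$, which is what lets you conclude the strict inequality $C_0X(\mathcal{T})^{\lambda}<1/2$ and push past $\mathcal{T}$; your sketch says this in substance, so the argument is complete.
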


We also need the following two lemmas in the proof of Proposition \ref{Prop perturbed decay}.

\begin{lemma}\upshape \label{Decay lemma} \cite[Page 134]{Qin}
Let $f$ be a nonnegative differentiable function satisfying the following inequality:
\[
f'+K_{0}f^{1+1/k}\leq \frac{K_{1}}{(1+t)^{1+k}}, \quad f(0)=f_{0},
\]
where $K_0,K_1>0$ are constants and $k>1$. Then, there is a constant $K_2$ such that  
\[
f(t) \leq \frac{K_{2} (k f_{0} +2K_{1})}{(1+t)^k} \quad \text{for all $t\geq0$.}
\]
\end{lemma}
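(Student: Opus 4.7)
The plan is to compare $f$ to an explicit supersolution of the expected decay type. Concretely, I would try the ansatz $F(t)=C(1+t)^{-k}$ and choose the constant $C>0$ large enough so that both (i) $F(0)\ge f(0)$ and (ii) $F'+K_0 F^{1+1/k}\ge K_1/(1+t)^{1+k}$ hold, and then invoke a standard ODE comparison principle to conclude $f\le F$ on $[0,\infty)$.

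A direct calculation (noting that $-k(1+1/k)=-k-1$) gives
\[
F'(t)+K_0\,F(t)^{1+1/k}=\bigl(K_0 C^{1+1/k}-kC\bigr)(1+t)^{-k-1},
\]
so (ii) reduces to the algebraic inequality $K_0 C^{1+1/k}-kC\ge K_1$. I would enforce this by splitting it into the two sufficient conditions $K_0 C^{1+1/k}/2\ge kC$ and $K_0 C^{1+1/k}/2\ge K_1$, which give $C\ge(2k/K_0)^{k}$ and $C\ge(2K_1/K_0)^{k/(k+1)}$ respectively. Combining these with the initial-data constraint $C\ge f_0$ and absorbing the purely algebraic factors into a constant $K_2=K_2(K_0,k)$, one reaches a choice of the form $C=K_2(k f_0+2K_1)$, which matches the stated conclusion.

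For the comparison step, I would argue by contradiction using the slightly enlarged strict supersolution $F_\varepsilon(t):=F(t)+\varepsilon$ for $\varepsilon>0$, noting that $F_\varepsilon'+K_0 F_\varepsilon^{1+1/k}>K_1/(1+t)^{1+k}$ by monotonicity of $x\mapsto K_0 x^{1+1/k}$. Assume toward contradiction that $f(t^*)>F_\varepsilon(t^*)$ for some smallest $t^*>0$; then $f(t^*)=F_\varepsilon(t^*)$ and $(f-F_\varepsilon)'(t^*)\ge 0$. Subtracting the (strict) supersolution inequality from the (weak) subsolution inequality satisfied by $f$, the terms $K_0 f(t^*)^{1+1/k}$ and $K_0 F_\varepsilon(t^*)^{1+1/k}$ cancel, yielding $(f-F_\varepsilon)'(t^*)<0$, a contradiction. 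Hence $f\le F_\varepsilon$ for every $\varepsilon>0$, and sending $\varepsilon\to 0$ gives $f(t)\le C(1+t)^{-k}$.

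The main obstacle is really a bookkeeping matter: one must verify that the threshold on $C$ can indeed be absorbed into a bound linear in $f_0$ and $K_1$ (so that $K_2$ depends only on $K_0$ and $k$), and this is where the hypothesis $k>1$ comes into play in controlling the exponent $k/(k+1)<1$ appearing in the second threshold. Once $C$ is chosen, the comparison step is entirely standard thanks to the monotonicity of the nonlinearity $x\mapsto K_0 x^{1+1/k}$ on $[0,\infty)$.
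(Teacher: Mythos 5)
The paper does not prove this lemma at all --- it is quoted verbatim from \cite[Page 134]{Qin} --- so there is no in-paper argument to compare against; your proof has to stand on its own. In substance it does: the computation $F'+K_0F^{1+1/k}=(K_0C^{1+1/k}-kC)(1+t)^{-k-1}$ for $F(t)=C(1+t)^{-k}$ is correct, the three thresholds $C\ge f_0$, $C\ge(2k/K_0)^{k}$, $C\ge(2K_1/K_0)^{k/(k+1)}$ do make $F$ a supersolution dominating $f$ at $t=0$, and the comparison step via the strict supersolution $F_\varepsilon=F+\varepsilon$ and a first-touching-time contradiction is sound (the monotonicity of $x\mapsto K_0x^{1+1/k}$ is exactly what makes the nonlinear terms cancel with the right sign). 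This yields $f(t)\le\max\bigl\{f_0,\ (2k/K_0)^{k},\ (2K_1/K_0)^{k/(k+1)}\bigr\}(1+t)^{-k}$, which proves the decay claim and is all the paper uses (it only invokes ``a constant $K_2$'').

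The one step you flag as ``bookkeeping'' is, however, not actually doable in the form you describe: the threshold $\max\{f_0,(2k/K_0)^{k},(2K_1/K_0)^{k/(k+1)}\}$ cannot be absorbed into $K_2(kf_0+2K_1)$ with $K_2$ depending only on $K_0$ and $k$. Indeed, take $f_0=0$ and $K_1\to0$: the forcing pushes $f$ up to size $\sim K_1/k$ on $[0,1]$, after which the autonomous dynamics $f'=-K_0f^{1+1/k}$ force $f(t)\sim(k/K_0)^{k}t^{-k}$ for large $t$, a prefactor that does \emph{not} vanish with $K_1$; so no bound of the form $K_2\cdot 2K_1(1+t)^{-k}$ with $K_2=K_2(K_0,k)$ can hold. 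The constant $K_2$ must be allowed to depend on $K_1$ (and $f_0$) as well --- which the lemma's literal wording permits, so your proof still establishes the statement as used. Also, your remark that $k>1$ is what controls the exponent $k/(k+1)<1$ is spurious: that inequality holds for every $k>0$, and $k>1$ plays no role in your argument.
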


\begin{lemma}\upshape \label{log-ineq lemma}\upshape
\cite[$H^{s}$ version of (2.1)]{Kozono}
For $f\in H^s(\mathbb{R}^3)$, $s>\frac{3}{2}$
\[
\left\|f\right\|_{L^{\infty}} \leq C \left\|f\right\|_{\dot{H}^{\frac{3}{2}}} \left[1+\left(\ln{ \frac{\|f\|_{H^{s}}}{\left\|f\right\|_{\dot{H}^{\frac{3}{2}}}}}\right)^{1/2}\right].
\]
\end{lemma}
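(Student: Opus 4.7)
The plan is to establish this logarithmic Sobolev inequality through a standard Littlewood-Paley frequency-splitting argument. I would start from the dyadic decomposition $f=\sum_{q\geq-1}\Delta_q f$ and the triangle inequality
\[
\|f\|_{L^\infty}\leq \sum_{q\geq-1}\|\Delta_q f\|_{L^\infty}.
\]
Bernstein's inequality \eqref{Bernstein ineq} in $\mathbb{R}^3$ gives $\|\Delta_q f\|_{L^\infty}\leq C\lambda_q^{3/2}\|\Delta_q f\|_{L^2}$ for every $q\geq-1$. The strategy is then to split the sum at an integer threshold $N\geq 0$ to be optimized at the end, in order to trade the scaling-critical low-frequency behaviour (controlled by $\dot H^{3/2}$) against the summable high-frequency tail (controlled by $\dot H^s$ with $s>3/2$).

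For the low-frequency block, I would apply Cauchy--Schwarz in the dyadic index to get
\[
\sum_{-1\leq q\leq N}\|\Delta_q f\|_{L^\infty}\leq C\sqrt{N+2}\,\biggl(\sum_{q\geq -1}\lambda_q^{3}\|\Delta_q f\|_{L^2}^2\biggr)^{1/2}\leq C\sqrt{N+2}\,\|f\|_{\dot H^{3/2}},
\]
where the last step uses the almost-orthogonality of the Littlewood-Paley blocks on the Fourier side to recover the $\dot H^{3/2}$ norm. For the high-frequency block, I would insert the factor $\lambda_q^{3/2-s}\lambda_q^{s-3/2}$, pair it with Cauchy--Schwarz against the $\dot H^s$-weights, and exploit $s>3/2$ to sum the resulting geometric series, obtaining
\[
\sum_{q>N}\|\Delta_q f\|_{L^\infty}\leq C\lambda_N^{3/2-s}\|f\|_{\dot H^s}\leq C\,2^{N(3/2-s)}\|f\|_{H^s}.
\]
Adding the two contributions and choosing $N\in\mathbb{N}$ so that $2^{N(s-3/2)}\sim \|f\|_{H^s}/\|f\|_{\dot H^{3/2}}$ balances the terms, the high-frequency piece is absorbed into $C\|f\|_{\dot H^{3/2}}$ while the $\sqrt{N+2}$ factor becomes $C\bigl(1+[\ln(\|f\|_{H^s}/\|f\|_{\dot H^{3/2}})]^{1/2}\bigr)$, which is exactly the asserted bound.

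The step I expect to require most care is the contribution of the very lowest frequencies, in particular the block $\Delta_{-1}f$. Because $\dot H^{3/2}$ carries the weight $|\xi|^{3/2}$ which vanishes at the origin, the quantity $\|\Delta_{-1}f\|_{L^2}$ is not controlled by $\|f\|_{\dot H^{3/2}}$ alone, so the almost-orthogonal reconstruction used above needs a small adjustment near $q=-1$. The practical way around this is a short case split: when the ratio $\|f\|_{H^s}/\|f\|_{\dot H^{3/2}}$ is bounded, the inequality reduces to the usual Sobolev embedding $H^s\hookrightarrow L^\infty$ for $s>3/2$, and when the ratio is large, the cutoff $N$ chosen above is positive and one can reroute $\Delta_{-1}f$ into the high-frequency sum at the cost of an additional $\|f\|_{L^2}\leq \|f\|_{H^s}$ contribution that is harmless inside the logarithm. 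After this adjustment the cutoff optimization delivers the claimed estimate, in agreement with the argument of \cite{Kozono}, which proves the corresponding statement in BMO-form and can be specialized here via the embedding $\dot H^{3/2}(\mathbb{R}^3)\hookrightarrow \mathrm{BMO}(\mathbb{R}^3)$.
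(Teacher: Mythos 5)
First, a point of comparison: the paper does not prove this lemma at all --- it is imported verbatim from \cite{Kozono} as the ``$H^s$ version of (2.1)'' --- so any self-contained argument is automatically a different route. Your overall strategy (inhomogeneous Littlewood--Paley decomposition, Bernstein's inequality, Cauchy--Schwarz in the dyadic index, and optimization of a single high-frequency cutoff $N$) is the standard Brezis--Gallouet-type argument, and it handles every block with $q\ge 0$ correctly: on the support of $\Phi(\lambda_q^{-1}\xi)$ one has $|\xi|\sim\lambda_q$, so the almost-orthogonality step recovering $\|f\|_{\dot H^{3/2}}$ is legitimate there, and the balancing of $\sqrt{N+2}\,\|f\|_{\dot H^{3/2}}$ against $2^{N(3/2-s)}\|f\|_{H^s}$ gives exactly the stated logarithm.

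However, your treatment of the block $\Delta_{-1}f$ --- which you correctly single out as the delicate point --- does not close. Bounding $\|\Delta_{-1}f\|_{L^\infty}\le C\|\Delta_{-1}f\|_{L^2}\le C\|f\|_{H^s}$ and ``rerouting it into the high-frequency sum'' produces a term of size $C\|f\|_{H^s}$ with no compensating factor $2^{N(3/2-s)}$, because $q=-1$ is a fixed index rather than one beyond the cutoff. Writing $M=\|f\|_{H^s}/\|f\|_{\dot H^{3/2}}$, this contribution equals $CM\,\|f\|_{\dot H^{3/2}}$, which is not dominated by the target $C\|f\|_{\dot H^{3/2}}\bigl(1+(\ln M)^{1/2}\bigr)$ when $M$ is large --- and the large-$M$ regime is precisely the one not covered by your Sobolev-embedding case. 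The repair requires a \emph{second, low-frequency} cutoff: decompose $\Delta_{-1}f$ further into homogeneous blocks (equivalently, split $\|\hat f\|_{L^1}$ over $\{|\xi|\le r\}$, $\{r<|\xi|\le R\}$ and $\{|\xi|>R\}$). The lowest frequencies contribute $Cr^{3/2}\|f\|_{L^2}$ by a geometric sum, the intermediate annulus contributes $C(\ln(R/r))^{1/2}\|f\|_{\dot H^{3/2}}$, and one optimizes both $r^{3/2}\|f\|_{L^2}\sim\|f\|_{\dot H^{3/2}}$ and $R^{3/2-s}\|f\|_{\dot H^{s}}\sim\|f\|_{\dot H^{3/2}}$; the interpolation $\|f\|_{\dot H^{3/2}}\le\|f\|_{L^2}^{1-\theta}\|f\|_{\dot H^{s}}^{\theta}$ with $\theta=\tfrac{3}{2s}$ guarantees $r\le R$ and that $\ln(R/r)\le C_s\ln M$. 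With this additional cutoff your argument delivers the claimed estimate; without it, the low-frequency contribution is genuinely uncontrolled.
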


%%%%%%%%%%%%%%%%%%%%%%%%%%
\subsection{Proof of Proposition \ref{Prop perturbed gwp}}
%%%%%%%%%%%%%%%%%%%%%%%%%
We recall (\ref{eq:perturbed-HMHD}):
\eqn \label{eq:4.2}
\begin{split}
&v_{t}-\nu\Delta v+(v\cdot\nabla) v-(b\cdot\nabla) b+\nabla q +(U\cdot\nabla)v-(M\cdot\nabla)b=-(v\cdot\nabla)\overline{v}-(\overline{v}\cdot\nabla) v+(b\cdot\nabla)\overline{b}+(\overline{b}\cdot\nabla) b,\\
&b_{t}-\nu\Delta b+(v\cdot\nabla) b-(b\cdot\nabla) v +\nabla\times((\nabla\times b)\times b)+(U\cdot\nabla)b-(M\cdot\nabla)v+\nabla\times((\nabla\times b)\times M) \\
&=-(v\cdot\nabla)\overline{b}-(\overline{v}\cdot\nabla) b+(b\cdot\nabla)\overline{v}+(\overline{b}\cdot\nabla) v -\nabla\times((\nabla\times b)\times\overline{b})-\nabla\times((\nabla\times\overline{b})\times b),\\
&\dv  v=\dv b=0,\\
&(v_{0},b_{0})=(u_{0}-\overline{u}_{0},B_{0}-\overline{B}_{0}).
\end{split}
\een

Since the linear terms with constant $(U,M)$ do not affect the energy estimates, we can follow \cite{Chae Degond Liu} to the left-hand side of (\ref{eq:4.2}) with the additional terms from the right-hand side of (\ref{eq:4.2}). We first estimate the $L^2$ bound 
\[
\begin{split}
&\frac{1}{2}\frac{d}{dt}\left(\|v\|_{L^2}^2+\|b\|_{L^2}^2\right)+\nu\left(\|\nabla v\|_{L^2}^2+\|\nabla b\|_{L^2}^2\right) \\
&= -\int(v\cdot\nabla)\overline{v}\cdot v+\int(b\cdot\nabla)\overline{b}\cdot v -\int(v\cdot\nabla)\overline{b}\cdot b+\int(b\cdot\nabla)\overline{v}\cdot b -\int((\nabla\times\overline{b})\times b)\cdot(\nabla\times b) \\
&= \int(v\otimes \overline{v}-b\otimes\overline{b}):\nabla v+\int(v\otimes\overline{b}-b\otimes\overline{v}):\nabla b-\int((\nabla\times\overline{b})\times b)\cdot(\nabla\times b) \\
&\leq \left(\|\overline{v}\|_{L^{\infty}}\|v\|_{L^2}+\|\overline{b}\|_{L^{\infty}}\|b\|_{L^2}\right)\|\nabla v\|_{L^2}+\left(\|\overline{b}\|_{L^{\infty}}\|v\|_{L^2}+\|\overline{v}\|_{L^{\infty}}\|b\|_{L^2}\right)\|\nabla b\|_{L^2}+\|\nabla \overline{b}\|_{L^{\infty}}\|b\|_{L^2}\|\nabla b\|_{L^2} \\
&\leq \frac{C}{\nu}\left(\|\overline{v}\|_{L^{\infty}}^2+\|\overline{b}\|_{W^{1,\infty}}^2\right)\left(\|v\|_{L^2}^2+\|b\|_{L^2}^2\right)+\frac{\nu}{2}\left(\|\nabla v\|_{L^2}^2+\|\nabla b\|_{L^2}^2\right).
\end{split}
\]
Hence, we have
\eqn \label{perturbed L2}
\frac{d}{dt}\left(\|v\|_{L^2}^2+\|b\|_{L^2}^2\right)+\nu\left(\|\nabla v\|_{L^2}^2+\|\nabla b\|_{L^2}^2\right) \leq \frac{C}{\nu}\left(\|\overline{v}\|_{L^{\infty}}^2+\|\overline{b}\|_{W^{1,\infty}}^2\right)\left(\|v\|_{L^2}^2+\|b\|_{L^2}^2\right),
\een
which implies \eqref{perturbed weak-ineq}.

We now estimate the $\dot{H}^3$ bound of $(v,b)$
\[
\begin{split}
&\frac{1}{2}\frac{d}{dt} \left(\left\|v\right\|^{2}_{\dot{H}^{3}}+\left\|b\right\|^{2}_{\dot{H}^{3}}\right)+\nu\left(\left\|\nabla v\right\|^{2}_{\dot{H}^{3}}+\left\|\nabla b\right\|^{2}_{\dot{H}^{3}}\right)\\
&=\int \nabla^{3} (v\otimes v-b\otimes b) \cdot \nabla^{3}\nabla v+\int \nabla^{3}(v\otimes b-b\otimes  v) \cdot \nabla^{3}\nabla b -\int\nabla^{3}((\nabla \times b)\times b)\cdot \nabla^{3}(\nabla \times b)\\
&\quad+\int\nabla^{3}(v\otimes\overline{v}+\overline{v}\otimes v-b\otimes\overline{b}-\overline{b}\otimes b):\nabla^{3}\nabla v+\int \nabla^{3}(v\otimes\overline{b}+\overline{v}\otimes b-b\otimes\overline{v}-\overline{b}\otimes v): \nabla^{3}\nabla b \\
&\quad-\int \nabla^{3}((\nabla\times\overline{b})\times b)\cdot \nabla^{3}(\nabla\times b) - \int \nabla^{3}((\nabla\times b)\times\overline{b}) \cdot \nabla^{3}(\nabla\times b) \\
&=\text{I}_{(1)}+\text{I}_{(2)}+\text{I}_{(3)}+\text{I}_{(4)}+\text{I}_{(5)}+\text{I}_{(6)}+\text{I}_{(7)}.
\end{split}
\]
We begin with $\text{I}_{(1)}+\text{I}_{(2)}$ and apply \eqref{eq:product-estimate} to obtain
\[
\begin{split}
\text{I}_{(1)}+\text{I}_{(2)}&\leq C\left(\|v\|_{L^{3}} +\|b\|_{L^{3}}\right)\left(\left\|\nabla^{3}v\right\|_{L^{6}}+\left\|\nabla^{3}b\right\|_{L^{6}}\right) \left(\left\|\nabla^4 v\right\|_{L^2}+\left\|\nabla^4 b\right\|_{L^2}\right)\\
&\leq C\left(\|v\|_{H^3}+\|b\|_{H^3}\right)\left(\left\|\nabla v\right\|_{\dot{H}^{3}}^2+\left\|\nabla b\right\|_{\dot{H}^{3}}^2\right).
\end{split}
\]
We also deal with $\text{I}_{(3)}$ using \eqref{eq:commutator-estimate}
\[
\begin{aligned}
\text{I}_{(3)} &=-\int\left[\nabla^{3}((\nabla \times b)\times b)-\nabla^3(\nabla\times b)\times b\right]\cdot \nabla^{3}(\nabla \times b)  \\
&\leq C\|\nabla b\|_{L^3}\|\nabla^3 b\|_{L^6}\|\nabla^4 b\|_{L^2} \leq C\|b\|_{H^3}\left\|\nabla b\right\|_{\dot{H}^{3}}^2.
\end{aligned}
\]
Using \eqref{eq:product-estimate}, we also bound $\text{I}_{(4)}+\text{I}_{(5)}+\text{I}_{(6)}$ as follows
\[
\begin{split}
\text{I}_{(4)}+\text{I}_{(5)}+\text{I}_{(6)}  &\leq C\left(\|\overline{v}\|_{W^{3,\infty}}+\|\overline{b}\|_{W^{4,\infty}}\right)\left(\|v\|_{H^3}+\|b\|_{H^3}\right)\left(\|\nabla v\|_{\dot{H}^3}+\|\nabla b\|_{\dot{H}^3}\right) \\
&\leq \frac{C}{\nu}\left(\norm{\overline{v}}{W^{3,\infty}}^2+\norm{\overline{b}}{W^{4,\infty}}^2\right)\left(\left\|v\right\|^{2}_{H^{3}}+\left\|b\right\|^{2}_{H^{3}}\right)+\frac{\nu}{4}\left(\|\nabla v\|_{\dot{H}^3}^2+\|\nabla b\|_{\dot{H}^3}^2\right).
\end{split}
\]
As $\text{I}_{(3)}$, we apply \eqref{eq:commutator-estimate} to obtain
\[
\begin{split}
\text{I}_{(7)} &=- \int \left[\nabla^{3}((\nabla\times b)\times\overline{b})-\nabla^3(\nabla\times b)\times \overline{b}\right] \cdot \nabla^{3}(\nabla\times b) \\
&\leq C\|\overline{b}\|_{W^{3,\infty}}\|b\|_{H^3}\|\nabla b\|_{\dot{H}^3}\leq \frac{C}{\nu}\norm{\overline{b}}{W^{4,\infty}}^2\left\|b\right\|^{2}_{H^{3}}+\frac{\nu}{4}\|\nabla b\|_{\dot{H}^3}^2.
\end{split}
\]
In sum, we obtain
\eqn \label{perturbed H3dot}
\begin{split}
&\frac{d}{dt} \left(\left\|v\right\|^{2}_{\dot{H}^{3}}+\left\|b\right\|^{2}_{\dot{H}^{3}}\right)+\nu\left(\left\|\nabla v\right\|^{2}_{\dot{H}^{3}}+\left\|\nabla v\right\|^{2}_{\dot{H}^{3}}\right)\\
&\leq C\left(\|v\|_{H^3}+\|b\|_{H^3}\right)\left(\left\|\nabla v\right\|_{\dot{H}^{3}}^2+\left\|\nabla b\right\|_{\dot{H}^{3}}^2\right) +\frac{C}{\nu}\left(\norm{\overline{v}}{W^{3,\infty}}^2+\norm{\overline{b}}{W^{4,\infty}}^2\right)\left(\left\|v\right\|^{2}_{H^{3}}+\left\|b\right\|^{2}_{H^{3}}\right).
\end{split}
\een

Combining \eqref{perturbed L2} and \eqref{perturbed H3dot}, we arrive at
\[
\begin{split}
&\frac{d}{dt} \left(\left\|v\right\|^{2}_{{H}^{3}}+\left\|b\right\|^{2}_{{H}^{3}}\right)+\nu\left(\left\|\nabla v\right\|^{2}_{{H}^{3}}+\left\|\nabla v\right\|^{2}_{{H}^{3}}\right)\\
&\leq C_1\left(\left\|v\right\|^{2}_{H^{3}}+\left\|b\right\|^{2}_{H^{3}}\right)^{\frac{1}{2}} \left(\left\|\nabla v\right\|^{2}_{{H}^{3}}+\left\|\nabla b\right\|^{2}_{{H}^{3}}\right)+\frac{C_2}{\nu}\left(\norm{\overline{v}}{W^{3,\infty}}^2+\norm{\overline{b}}{W^{4,\infty}}^2\right)\left(\left\|v\right\|^{2}_{H^{3}}+\left\|b\right\|^{2}_{H^{3}}\right)
\end{split}
\]
for some $C_1,C_2>0$.
By Lemma \ref{Inequality Lemma 2}, we derive the following bound for all $t>0$
\[
\begin{split}
&\left\|v(t)\right\|^{2}_{H^{3}}+\left\|b(t)\right\|^{2}_{H^{3}}+\frac{\nu}{2}\int^{t}_{0}\left(\left\|\nabla v(\tau)\right\|^{2}_{H^{3}}+\left\|\nabla v(\tau)\right\|^{2}_{H^{3}}\right)d\tau \\
&\leq \left(\left\|v_{0}\right\|^{2}_{H^{3}}+\left\|b_{0}\right\|^{2}_{H^{3}}\right)\exp\left[\frac{C_2}{\nu}\int^{t}_{0}\left(\norm{\overline{v}(\tau)}{W^{3,\infty}}^2+\norm{\overline{b}(\tau)}{W^{4,\infty}}^2\right)d\tau\right] \leq \left(\left\|v_{0}\right\|^{2}_{H^{3}}+\left\|b_{0}\right\|^{2}_{H^{3}}\right)\exp{\left[\frac{C_0C_2}{\nu}\right]}
\end{split}
\]
when 
\[
\left(\left\|v_{0}\right\|^{2}_{H^{3}}+\left\|b_{0}\right\|^{2}_{H^{3}}\right)\exp{\left[\frac{C_0C_2}{\nu}\right]}<\frac{\nu^2}{4C_1^2}.
\]
This completes the proof of Proposition \ref{Prop perturbed gwp}.

%%%%%%%%%%%%%%%%%%%%%%%%%
\subsection{Proof of Proposition \ref{Prop perturbed decay}}
%%%%%%%%%%%%%%%%%%%%%%%%%
From \eqref{perturbed H3dot}, \eqref{perturbed eq bounds}, and \eqref{perturbed decay condition}, we have
\[
\frac{d}{dt} \left(\left\|v\right\|^{2}_{\dot{H}^{3}}+\left\|b\right\|^{2}_{\dot{H}^{3}}\right)+\frac{\nu}{2}\left(\left\|\nabla v\right\|^{2}_{\dot{H}^{3}}+\left\|\nabla b\right\|^{2}_{\dot{H}^{3}}\right) \leq \frac{C\epsilon}{\nu}\left(\norm{\overline{v}}{W^{3,\infty}}^{2}+\norm{\overline{b}}{W^{4,\infty}}^{2}\right)\leq \frac{K_{1}}{(1+t)^{4}}
\]
for $K_1={C\epsilon}{\nu}^{-1}$.
Since 
\[
\left\|v\right\|^{2}_{\dot{H}^{3}}+\left\|b\right\|^{2}_{\dot{H}^{3}} \leq \left\|v\right\|^{\frac{1}{2}}_{L^{2}}\left\|\nabla v\right\|^{\frac{3}{2}}_{\dot{H}^{3}}+\left\|b\right\|^{\frac{1}{2}}_{L^{2}}\left\|\nabla b\right\|^{\frac{3}{2}}_{\dot{H}^{3}}\leq C\left(\left\|\nabla v\right\|^{\frac{3}{2}}_{\dot{H}^{3}}+\left\|\nabla b\right\|^{\frac{3}{2}}_{\dot{H}^{3}}\right)
\]
from (\ref{energy interpolation}), we arrive at 
\[
\frac{d}{dt} \left(\left\|v\right\|^{2}_{\dot{H}^{3}}+\left\|b\right\|^{2}_{\dot{H}^{3}}\right)+K_{0} \left(\left\|v\right\|^{2}_{\dot{H}^{3}}+\left\|b\right\|^{2}_{\dot{H}^{3}}\right)^{4/3} \leq \frac{K_{1}}{(1+t)^{4}},
\]
for $K_{0}=\nu C$. By Lemma \ref{Decay lemma}, we obtain 
\[
\left\|v(t)\right\|^{2}_{\dot{H}^{3}}+\left\|b(t)\right\|^{2}_{\dot{H}^{3}} \leq K_{2} \left(3\left(\left\|v_{0}\right\|^{2}_{\dot{H}^{3}}+\left\|b_{0}\right\|^{2}_{\dot{H}^{3}}\right) +2K_{1}\right)(1+t)^{-3}
\]
for all $t>0$. \eqref{perturbed decay1} is derived by using (\ref{energy interpolation}) for $0<k<3$. 

We also deduce from Lemma \ref{log-ineq lemma} and (\ref{perturbed eq bounds}), \eqref{perturbed decay1} with $k=3/2$ that
\[
\left\|v(t)\right\|_{L^{\infty}}+\left\|b(t)\right\|_{L^{\infty}}\leq C(\epsilon) \left(\left\|v(t)\right\|_{\dot{H}^{\frac{3}{2}}}+\left\|b(t)\right\|_{\dot{H}^{\frac{3}{2}}}\right)\leq C(\epsilon) (1+t)^{-\frac{3}{4}},
\]
where $C(\epsilon)$ is a constant only depending on $\epsilon$. This completes the proof of Proposition \ref{Prop perturbed decay}.

%%%%%%%%%%%%%%%%%%%%%%%%%%%%%%%%%%%%%%%%%%%%%%%
\section{Proof of Theorem \ref{Theorem 2.3}} \label{sec:5}
%%%%%%%%%%%%%%%%%%%%%%%%%%%%%%%%%%%%%%%%%%%%
We use the following inequalities repeatedly when we prove Theorem \ref{Theorem 2.3}:
\[
\left\|f\right\|_{L^{3}}\leq C \left\|f\right\|_{\dot{H}^{\frac{1}{2}}}, \quad \left\|f\right\|_{L^{6}}\leq C \left\|f\right\|_{\dot{H}^{1}}.
\]

%%%%%%%%%%%%%%%%%%%%%%%%%%
\subsection{Proof of Theorem \ref{Theorem 2.3}}
%%%%%%%%%%%%%%%%%%%%%%%%
In the proof of Theorem \ref{Theorem 2.3}, there are many constants depending on $(\alpha,\beta)$ to which we write $C$ for simplicity. We first recall energy inequality:
\eqn \label{energy inequality}
\left\|u(t)\right\|^{2}_{L^{2}} + \left\|B(t)\right\|^{2}_{L^{2}}+2\nu \int^{t}_{0} \left\|\nabla u(\tau)\right\|^{2}_{L^{2}}d\tau +2\eta \int^{t}_{0} \left\|\nabla B(\tau)\right\|^{2}_{L^{2}}d\tau \leq \left\|u_{0}\right\|^{2}_{L^{2}}+\left\|B_{0}\right\|^{2}_{L^{2}}=\mathcal{E}_{0}.
\een

\vspace{1ex}
\noindent
\textbf{Step 1.} Since the local well-posedness of $H^3$ solutions $(u,B)$ to \eqref{vH MHD} is established in \cite{Chae Degond Liu}, we begin with $(u,B)\in C([0,T];H^3)$ for some $T>0$.
Let $\Phi=B+\omega-\alpha u$ and $\Psi=u-J+\beta B$ and we rewrite \eqref{vH MHD} as follows:
\begin{equation}\label{HMHD 5}
\begin{split}
\Phi_{t}-\mu\Delta\Phi =&-\nabla \times (\Phi\times u)+\alpha\Psi\times B+\alpha \Phi\times u+\alpha\nabla\overline{p}+\frac{\nu-\eta}{2}\Delta\Phi-(\nu-\eta)\Delta B\\
\Psi_{t}-\mu\Delta\Psi =&-\Psi\times B-\Phi\times u-\nabla\overline{p}-\nabla \times (\nabla \times  (\Psi\times B))+\beta\nabla \times  (\Psi\times B) \\
&-\frac{\nu-\eta}{2}\Delta\Psi+(\nu-\eta)\Delta u,
\end{split}
\end{equation}
where $\mu=(\nu+\eta)/2$. From (\ref{HMHD 5}), we first have 
\eqn \label{L2 bound phi psi}
\begin{split}
&\frac{1}{2}\frac{d}{dt}\left(\norm{\Phi}{L^{2}}^{2}+\norm{\Psi}{L^{2}}^{2}\right)+\mu\left(\norm{\nabla\Phi}{L^{2}}^{2}+\norm{\nabla\Psi}{L^{2}}^{2}\right)\\
& =\left[\alpha\int(\Psi\times B)\cdot\Phi-\int(\Phi\times u)\cdot\Psi\right] +\left[-\int(\Phi\times u)\cdot \nabla\times\Phi+\beta\int(\Psi\times B)\cdot\nabla\times\Psi\right] \\
&\quad-\int (\Psi\times B)\cdot \nabla \times \nabla \times \Psi-\frac{\nu-\eta}{2}\left(\norm{\nabla\Phi}{L^2}^2-\norm{\nabla\Psi}{L^2}^2\right)\\
&\quad+(\nu-\eta)\left(\int\nabla B:\nabla\Phi-\int\nabla u:\nabla\Psi\right)=\text{I}_{1}+\text{I}_{2}+\text{I}_{3}+\text{I}_{4}+\text{I}_{5}.
\end{split}
\een
Since 
\[
u=\frac{\Psi-\beta\Phi+J+\beta\omega}{1+\alpha\beta}, \quad B=\frac{\Phi+\alpha\Psi-\omega+\alpha J}{1+\alpha\beta}
\]
 when $1+\alpha\beta\neq0$, we have
 \eqn \label{I 1}
\begin{split}
\text{I}_{1}&=\frac{1}{1+\alpha\beta}\left(\int \left[-\alpha (\Psi\times \omega)\cdot \Phi +\alpha^{2} (\Psi\times J)\cdot \Phi -(\Phi\times J)\cdot \Psi -\beta (\Phi\times \omega)\cdot \Psi\right]\right)\\
&\leq C\left(\norm{\nabla u}{L^{2}}+\norm{\nabla B}{L^{2}}\right)\norm{\Psi}{L^{3}}\norm{\Phi}{L^{6}}\leq C \left(\norm{\nabla u}{L^{2}}^{2}+\norm{\nabla B}{L^{2}}^{2}\right)\norm{\Psi}{\dot{H}^{\frac{1}{2}}}\norm{\nabla\Phi}{L^{2}}\\
&\leq \frac{C}{\mu}\left(\norm{\nabla u}{L^{2}}^{2}+\norm{\nabla B}{L^{2}}^{2}\right)\norm{\Psi}{\dot{H}^{\frac{1}{2}}}^{2}+\frac{\mu}{8}\norm{\nabla\Phi}{L^{2}}^{2}.
\end{split}
\een
We also bound $\text{I}_{2}$ as follows:
\[
\begin{split}
\text{I}_2 &\leq C \left(\norm{u}{L^{6}}+\norm{B}{L^{6}}\right) \left(\norm{\Psi}{L^{3}}+\norm{\Phi}{L^{3}}\right) \left(\norm{\nabla \Psi}{L^{2}}+\norm{\nabla \Phi}{L^{2}}\right)\\
&\leq \frac{C}{\mu} \left(\norm{\nabla u}{L^{2}}^{2}+\norm{\nabla B}{L^{2}}^{2}\right)\left(\norm{\Phi}{\dot{H}^{\frac{1}{2}}}^{2}+\norm{\Psi}{\dot{H}^{\frac{1}{2}}}^{2}\right)+\frac{\mu}{16}\left(\norm{\nabla\Phi}{L^{2}}^{2} +\norm{\nabla\Psi}{L^{2}}^{2}\right). 
\end{split}
\]
Since
\[
\text{I}_{3}=\int (\Psi\times B)\cdot \Delta \Psi=-\int (\partial_{k}\Psi \times B)\cdot \partial_{k} \Psi-\int (\Psi \times \partial_{k}B)\cdot \partial_{k} \Psi=-\int (\Psi \times \partial_{k}B)\cdot \partial_{k} \Psi
\]
and  
\[
\norm{\nabla B}{L^{6}}\leq C \norm{J}{L^{6}}\leq C\left(\norm{B}{L^{6}}+ \norm{u}{L^{6}} +\norm{\Psi}{L^{6}}\right),
\]
we bound $\text{I}_{3}$ as
\[
\begin{split}
\text{I}_{3}&\leq C\left(\norm{B}{L^{6}}+ \norm{u}{L^{6}} +\norm{\Psi}{L^{6}}\right)\norm{\Psi}{L^{3}}\norm{\nabla \Psi}{L^{2}}\\
&\leq \frac{C}{\mu}\left(\norm{\nabla u}{L^{2}}^{2}+\norm{\nabla B}{L^{2}}^{2}\right)\norm{\Psi}{\dot{H}^{\frac{1}{2}}}^{2}+C\norm{\Psi}{\dot{H}^{\frac{1}{2}}}\norm{\nabla \Psi}{L^{2}}^{2} +\frac{\mu}{8} \norm{\nabla \Psi}{L^{2}}^{2}.
\end{split}
\]
We bound the  remaining parts $\text{I}_{4}$ and $\text{I}_{5}$:
\[
\begin{split}
\text{I}_4 &\leq \frac{|\nu-\eta|}{2}\left(\norm{\nabla\Phi}{L^2}^2+\norm{\nabla\Psi}{L^2}^2\right)\leq \frac{\mu}{16}\left(\norm{\nabla\Phi}{L^2}^2+\norm{\nabla\Psi}{L^2}^2\right),\\
\text{I}_5 &\leq \frac{\mu}{16}\left(\norm{\nabla\Phi}{L^2}^2+\norm{\nabla\Psi}{L^2}^2\right)+C\frac{(\nu-\eta)^2}{\nu+\eta}\left(\norm{\nabla u}{L^2}^2+\norm{\nabla B}{L^2}^2\right).
\end{split}
\] 
Therefore, we obtain 
\eqn \label{eq:4.4}
\begin{split}
&\frac{1}{2}\frac{d}{dt}\left(\norm{\Phi}{L^{2}}^{2}+\norm{\Psi}{L^{2}}^{2}\right)+\mu\left(\norm{\nabla\Phi}{L^{2}}^{2}+\norm{\nabla\Psi}{L^{2}}^{2}\right)\\
&\leq \frac{C}{\mu}\left(\norm{\nabla u}{L^{2}}^{2}+\norm{\nabla B}{L^{2}}^{2}\right)\left(\norm{\Phi}{\dot{H}^{\frac{1}{2}}}^{2}+\norm{\Psi}{\dot{H}^{\frac{1}{2}}}^{2}\right)+C\norm{\Psi}{\dot{H}^{\frac{1}{2}}}\norm{\nabla \Psi}{L^{2}}^{2} \\
&\quad +C\frac{(\nu-\eta)^2}{\nu+\eta}\left(\norm{\nabla u}{L^2}^2+\norm{\nabla B}{L^2}^2\right)+\frac{5\mu}{16}\left(\norm{\nabla\Phi}{L^2}^2+\norm{\nabla\Psi}{L^2}^2\right).
\end{split}
\een

\vspace{1ex}

By the $L^{2}$ inner product of (\ref{HMHD 5}) with $(\Lambda\Phi,\Lambda\Psi)$, we also have
\begin{align*}
&\frac{1}{2}\frac{d}{dt}\left(\norm{\Phi}{\dot{H}^{\frac{1}{2}}}^{2}+\norm{\Psi}{\dot{H}^{\frac{1}{2}}}^{2}\right)+\mu\left(\norm{\Phi}{\dot{H}^{\frac{3}{2}}}+\norm{\Psi}{\dot{H}^{\frac{3}{2}}}\right)\\
&=\left[\alpha\int(\Psi\times B)\cdot\Lambda\Phi+\alpha\int(\Phi\times u)\cdot\Lambda\Phi-\int(\Psi\times B)\cdot\Lambda\Psi-\int(\Phi\times u)\cdot\Lambda\Psi\right]\\
&\quad-\int\nabla \times (\Phi\times u)\cdot\Lambda\Phi+\beta \int\nabla \times (\Psi\times B)\cdot\Lambda\Psi-\int\nabla \times (\nabla \times (\Psi\times B))\cdot\Lambda\Psi\\
&\quad-\frac{\nu-\eta}{2}\left(\norm{\Phi}{\dot{H}^{\frac{3}{2}}}^2-\norm{\Psi}{\dot{H}^{\frac{3}{2}}}^2\right)+(\nu-\eta)\left(\int\nabla\times\nabla\times B\cdot \Lambda\Phi-\int\nabla\times\nabla\times u\cdot\Lambda\Psi\right)\\
&=\text{II}_{1}+\text{II}_{2}+\text{II}_{3}+\text{II}_{4}+\text{II}_{5}+\text{II}_{6}.
\end{align*}
We begin with $\text{II}_{1}$:
\begin{align*}
\text{II}_{1}&\leq C\left(\norm{u}{L^{6}}+\norm{B}{L^{6}}\right) \left(\norm{\Phi}{L^{2}}+\norm{\Psi}{L^{2}}\right)\left(\norm{\nabla\Phi}{L^{3}}+\norm{\nabla\Psi}{L^{3}}\right)\\
&\leq \frac{C}{\mu}\left(\norm{\nabla  u}{L^{2}}^{2}+\norm{\nabla B}{L^{2}}^{2}\right) \left(\norm{\Phi}{L^{2}}^{2}+\norm{\Psi}{L^{2}}^{2}\right)+\frac{\mu}{8}\left(\norm{\Phi}{\dot{H}^{\frac{3}{2}}}^{2} +\norm{\Psi}{\dot{H}^{\frac{3}{2}}}^{2}\right).
\end{align*}
We now bound $\text{II}_{2}$:
\[
\begin{split}
\text{II}_{2}&\leq C\norm{\nabla u}{L^{2}}\norm{\Phi}{L^{6}}\norm{\nabla\Phi}{L^{3}}+C\norm{u}{L^{6}}\norm{\nabla\Phi}{L^{2}}\norm{\nabla\Phi}{L^{3}}\\
&\leq C\norm{\nabla u}{L^{2}}\norm{\nabla \Phi}{L^{2}}\norm{\Phi}{\dot{H}^{\frac{3}{2}}}\leq C\norm{\nabla u}{L^{2}}\norm{\Phi}{\dot{H}^{\frac{1}{2}}}^{\frac{1}{2}}\norm{\Phi}{\dot{H}^{\frac{3}{2}}}^{\frac{3}{2}}.
\end{split}
\]
Using $\Phi=B+\omega-\alpha u$,
\[
\norm{\nabla u}{L^{2}} \leq \norm{\nabla u}{L^{2}}^{\frac{1}{2}}\norm{\omega}{L^{2}}^{\frac{1}{2}}\leq C\norm{\nabla u}{L^{2}}^{\frac{1}{2}} \left(\norm{B}{L^{2}}^{\frac{1}{2}}+\norm{\Phi}{L^{2}}^{\frac{1}{2}}+\sqrt{|\alpha|}\norm{u}{L^{2}}^{\frac{1}{2}}\right)
\]
and so we have
\[
\begin{split}
\text{II}_{2}&\leq C\norm{\nabla u}{L^{2}}^{\frac{1}{2}} \left(\norm{B}{L^{2}}^{\frac{1}{2}}+\norm{u}{L^{2}}^{\frac{1}{2}}\right)\norm{\Phi}{\dot{H}^{\frac{1}{2}}}^{\frac{1}{2}}\norm{\Phi}{\dot{H}^{\frac{3}{2}}}^{\frac{3}{2}}+C\norm{\nabla u}{L^{2}}^{\frac{1}{2}} \norm{\Phi}{L^{2}}^{\frac{1}{2}}\norm{\Phi}{\dot{H}^{\frac{1}{2}}}^{\frac{1}{2}}\norm{\Phi}{\dot{H}^{\frac{3}{2}}}^{\frac{3}{2}}\\
& \leq \frac{C}{\mu^{3}}\left(\norm{u}{L^{2}}^{2}+\norm{B}{L^{2}}^{2}\right)\norm{\nabla u}{L^{2}}^{2} \norm{\Phi}{\dot{H}^{\frac{1}{2}}}^{2} + \frac{C}{\mu}\norm{\nabla u}{L^{2}}^{2} \norm{\Phi}{L^{2}}^{2} +{\frac{\mu}{8}}\norm{\Phi}{\dot{H}^{\frac{3}{2}}}^{2} +\mu^{\frac{1}{3}}\norm{\Phi}{\dot{H}^{\frac{1}{2}}}^{\frac{2}{3}}\norm{\Phi}{\dot{H}^{\frac{3}{2}}}^{2} \\
& \leq \frac{C}{\mu^{3}}\left(\norm{u}{L^{2}}^{2}+\norm{B}{L^{2}}^{2}\right)\norm{\nabla u}{L^{2}}^{2} \norm{\Phi}{\dot{H}^{\frac{1}{2}}}^{2} + \frac{C}{\mu}\norm{\nabla u}{L^{2}}^{2} \norm{\Phi}{L^{2}}^{2} +{\frac{\mu}{4}}\norm{\Phi}{\dot{H}^{\frac{3}{2}}}^{2}+C\norm{\Phi}{\dot{H}^{\frac{1}{2}}}\norm{\Phi}{\dot{H}^{\frac{3}{2}}}^2 .
\end{split}
\]
Similarly, we use $\Psi=u-J+\beta B$ to obtain
\[
\begin{split}
\text{II}_{3}&\leq C\norm{\nabla B}{L^2}\norm{\nabla\Psi}{L^2}\norm{\Psi}{\dot{H}^{\frac{3}{2}}}\leq C\norm{\nabla B}{L^2}\norm{\Psi}{\dot{H}^{\frac{1}{2}}}^{\frac{1}{2}}\norm{\Psi}{\dot{H}^{\frac{3}{2}}}^{\frac{3}{2}} \\
&\leq \frac{C}{\mu^{3}}\left(\norm{u}{L^{2}}^{2}+\norm{B}{L^{2}}^{2}\right)\norm{\nabla B}{L^{2}}^{2} \norm{\Psi}{\dot{H}^{\frac{1}{2}}}^{2}+\frac{C}{\mu}\norm{\nabla B}{L^{2}}^{2} \norm{\Psi}{L^{2}}^{2}  +{\frac{\mu}{8}}\norm{\Psi}{\dot{H}^{\frac{3}{2}}}^{2}+C\norm{\Psi}{\dot{H}^{\frac{1}{2}}}\norm{\Psi}{\dot{H}^{\frac{3}{2}}}^{2}.
\end{split}
\]
We now bound $\text{II}_{4}$:
\begin{align*}
\text{II}_{4}&=\int (\Psi\times B)\cdot \Delta \Lambda \Psi =-\int (\Psi\times B)\cdot \Lambda^{3} \Psi=-\int\left[\Lambda^{\frac{3}{2}}(\Psi\times B)-\Lambda^{\frac{3}{2}}\Psi\times B\right]\cdot\Lambda^{\frac{3}{2}}\Psi\\
&\leq C\left(\norm{\Lambda^{\frac{3}{2}}B}{L^{3}}\norm{\Psi}{L^{6}}+\norm{\nabla B}{L^{6}}\norm{\Lambda^{\frac{1}{2}}\Psi}{L^{3}}\right)\norm{\Lambda^{\frac{3}{2}}\Psi}{L^{2}}\leq C\norm{\nabla J}{L^{2}}\norm{\nabla\Psi}{L^{2}}\norm{\Psi}{\dot{H}^{\frac{3}{2}}}.
\end{align*}
Since 
\[
\norm{\nabla J}{L^{2}} \leq \norm{\nabla \Psi}{L^{2}}+\norm{\nabla u}{L^{2}}+|\beta|\norm{\nabla B}{L^{2}},
\]
we bound $\text{II}_{4}$ as follows:
\begin{align*}
\text{II}_{4}&\leq C\norm{\nabla\Psi}{L^{2}}^{2}\norm{\Psi}{\dot{H}^{\frac{3}{2}}}+C\left(\norm{\nabla u}{L^{2}}+\norm{\nabla B}{L^{2}}\right)\norm{\nabla\Psi}{L^{2}}\norm{\Psi}{\dot{H}^{\frac{3}{2}}}\\
&\leq C\norm{\Psi}{\dot{H}^{\frac{1}{2}}}\norm{\Psi}{\dot{H}^{\frac{3}{2}}}^{2}+C\left(\norm{\nabla u}{L^{2}}+\norm{\nabla B}{L^{2}}\right)\norm{\Psi}{\dot{H}^{\frac{1}{2}}}^{\frac{1}{2}}\norm{\Psi}{\dot{H}^{\frac{3}{2}}}^{\frac{3}{2}}.
\end{align*}
Here, the second term of the right-hand side is treated as in $\text{II}_3$:
\[
\begin{split}
& C\left(\norm{\nabla u}{L^{2}}+\norm{\nabla B}{L^{2}}\right)\norm{\Psi}{\dot{H}^{\frac{1}{2}}}^{\frac{1}{2}}\norm{\Psi}{\dot{H}^{\frac{3}{2}}}^{\frac{3}{2}} \\
&\leq \frac{C}{\mu^{3}}\left(\norm{u}{L^{2}}^{2}+\norm{B}{L^{2}}^{2}\right)\left(\norm{\nabla u}{L^2}^2+\norm{\nabla B}{L^2}^2\right)\norm{\Psi}{\dot{H}^{\frac{1}{2}}}^{2}+\frac{C}{\mu}\left(\norm{\nabla u}{L^2}^2+\norm{\nabla B}{L^2}^2\right)\norm{\Psi}{L^{2}}^{2} \\
&\quad+{\frac{\mu}{8}}\norm{\Psi}{\dot{H}^{\frac{3}{2}}}^{2}+C\norm{\Psi}{\dot{H}^{\frac{1}{2}}}\norm{\Psi}{\dot{H}^{\frac{3}{2}}}^{2}.
\end{split} 
\]
We now bound $\text{II}_{5}$:
\[
\text{II}_{5}\leq \frac{|\nu-\eta|}{2}\left(\norm{\Phi}{\dot{H}^{\frac{3}{2}}}^2+\norm{\Psi}{\dot{H}^{\frac{3}{2}}}^2\right)\leq \frac{\mu}{8}\left(\norm{\Phi}{\dot{H}^{\frac{3}{2}}}^2+\norm{\Psi}{\dot{H}^{\frac{3}{2}}}^2\right).
\] 
We finally bound $\text{II}_{6}$. Using $\Phi=B+\omega-\alpha u$ and $\Psi=u-J+\beta B$, 
\[
\begin{split}
\text{II}_6 &=(\nu-\eta)\left(\int\nabla\times(u+\beta B-\Psi)\cdot\Lambda\Phi-\int\nabla\times(\alpha u-B+\Phi)\cdot\Lambda\Psi\right) \\
&\leq |\nu-\eta|\left(\norm{\nabla\Phi}{L^2}^2+\norm{\nabla\Psi}{L^2}^2\right)+C|\nu-\eta|\left(\norm{\nabla u}{L^2}+\norm{\nabla B}{L^2}\right)\left(\norm{\nabla\Phi}{L^2}+\norm{\nabla\Psi}{L^2}\right)\\
&\leq \frac{3\mu}{16}\left(\norm{\nabla\Phi}{L^2}^2+\norm{\nabla\Psi}{L^2}^2\right)+C\frac{(\nu-\eta)^2}{\nu+\eta}\left(\norm{\nabla u}{L^2}^2+\norm{\nabla B}{L^2}^2\right).
\end{split}
\]
In sum, we obtain 
\eqn \label{eq:4.5}
\begin{split}
&\frac{1}{2}\frac{d}{dt}\left(\norm{\Phi}{\dot{H}^{\frac{1}{2}}}^{2}+\norm{\Psi}{\dot{H}^{\frac{1}{2}}}^{2}\right)+\mu\left(\norm{\Phi}{\dot{H}^{\frac{3}{2}}}+\norm{\Psi}{\dot{H}^{\frac{3}{2}}}\right)\\
&\leq \frac{C}{\mu}\left(\norm{\nabla  u}{L^{2}}^{2}+\norm{\nabla B}{L^{2}}^{2}\right) \left(\norm{\Phi}{L^{2}}^{2}+\norm{\Psi}{L^{2}}^{2}\right)\\
&\quad+\frac{C}{\mu^{3}} \left(\norm{u}{L^{2}}^{2}+\norm{B}{L^{2}}^{2}\right)\left(\norm{\nabla u}{L^{2}}^{2}+\norm{\nabla B}{L^{2}}^{2}\right) \left(\norm{\Phi}{\dot{H}^{\frac{1}{2}}}^{2}+\norm{\Psi}{\dot{H}^{\frac{1}{2}}}^{2}\right) \\
&\quad+C\frac{(\nu-\eta)^2}{\nu+\eta}\left(\norm{\nabla u}{L^2}^2+\norm{\nabla B}{L^2}^2\right) +\frac{3\mu}{16}\left(\norm{\nabla\Phi}{L^2}^2+\norm{\nabla\Psi}{L^2}^2\right)\\
&\quad+\frac{\mu}{2}\left(\norm{\Phi}{\dot{H}^{\frac{3}{2}}}^2+\norm{\Psi}{\dot{H}^{\frac{3}{2}}}^2\right)+C\norm{\Phi}{\dot{H}^{\frac{1}{2}}}\norm{\Phi}{\dot{H}^{\frac{1}{2}}}^2+C\norm{\Psi}{\dot{H}^{\frac{1}{2}}}\norm{\Psi}{\dot{H}^{\frac{3}{2}}}^2.
\end{split}
\een

\vspace{1ex}

By (\ref{eq:4.4}) and (\ref{eq:4.5}), we arrive at
\eqn \label{eq:4.7}
\begin{split}
&\frac{d}{dt}\left(\norm{\Phi}{H^{\frac{1}{2}}}^{2}+\norm{\Psi}{H^{\frac{1}{2}}}^{2}\right)+\mu\left(\norm{\nabla \Phi}{H^{\frac{1}{2}}}^{2}+\norm{\nabla \Psi}{H^{\frac{1}{2}}}^{2}\right)\\
&\leq \frac{C_1}{2}\left(\frac{1}{\mu}+\frac{\mathcal{E}_0}{\mu^3}\right)\left(\norm{\nabla u}{L^2}^2+\norm{\nabla B}{L^2}^2\right)\left(\norm{\Phi}{H^{\frac{1}{2}}}^{2}+\norm{\Psi}{H^{\frac{1}{2}}}^{2}\right) \\
&\quad+\frac{C_2}{2}\left(\norm{\Phi}{H^{\frac{1}{2}}}^{2}+\norm{\Psi}{H^{\frac{1}{2}}}^{2}\right)^{\frac{1}{2}}\left(\norm{\nabla\Phi}{H^{\frac{1}{2}}}^{2}+\norm{\nabla\Psi}{H^{\frac{1}{2}}}^{2}\right) + \frac{C_3}{4}\frac{(\nu-\eta)^2}{\nu+\eta}\left(\norm{\nabla u}{L^2}^2+\norm{\nabla B}{L^2}^2\right).
\end{split}
\een
Set
\[
\begin{aligned}
&X=\norm{\Phi}{H^{\frac{1}{2}}}^{2}+\norm{\Psi}{H^{\frac{1}{2}}}^{2}, \quad D=\mu \left(\norm{\nabla \Phi}{H^{\frac{1}{2}}}^{2}+\norm{\nabla \Psi}{H^{\frac{1}{2}}}^{2}\right),\quad C_0=\frac{C_2}{2\mu},\quad \lambda=\frac{1}{2}, \\
& W=\frac{C_1}{2}\left(\frac{1}{\mu}+\frac{\mathcal{E}_0}{\mu^3}\right)\left(\norm{\nabla u}{L^2}^2+\norm{\nabla B}{L^2}^2\right),\quad E=\frac{C_3}{4}\frac{(\nu-\eta)^2}{\nu+\eta}\left(\norm{\nabla u}{L^2}^2+\norm{\nabla B}{L^2}^2\right).
\end{aligned}
\] 
By Lemma \ref{Inequality Lemma 2}, we derive that if $(u_0,B_0)$ satisfies
\[
\left(\norm{\Phi_{0}}{H^{\frac{1}{2}}}^{2}+\norm{\Psi_{0}}{H^{\frac{1}{2}}}^{2}+C_3\mathcal{E}_0\frac{(\nu-\eta)^2}{(\nu+\eta)^2}\right)\exp{\left(\frac{C_1\mathcal{E}_0}{\mu^2}+\frac{C_1\mathcal{E}_0^2}{\mu^4}\right)}<\frac{\mu^2}{C_2^2},
\]
then, the solutions $(u,B)$ satisfies
\eqn \label{smallness solution HMHD dd}
\norm{\Phi(t)}{H^{\frac{1}{2}}}^{2}+\norm{\Psi(t)}{H^{\frac{1}{2}}}^{2}+\frac{\mu}{2}\int^{t}_{0} \left(\norm{\nabla \Phi(\tau)}{H^{\frac{1}{2}}}^{2}+\norm{\nabla \Psi(\tau)}{H^{\frac{1}{2}}}^{2}\right)d\tau <\frac{\mu^2}{C_2^2}
\een
for all $t\in [0,T]$.

\vspace{1ex}
\noindent
\textbf{Step 2.}
Now we let the maximal existence time ${T_{\text{max}}}>0$ and suppose $0<{T_{\text{max}}}<\infty$. By using \eqref{energy inequality} and \eqref{smallness solution HMHD dd}, we estimate $\omega=\alpha u-B+\Phi$ and $J=u+\beta B-\Psi$ as
\eqn \label{omega J L2 bound dd}
\begin{split}
&\left\|\omega(t)\right\|^{2}_{L^{2}} +\left\|J(t)\right\|^{2}_{L^{2}}+\frac{\mu}{2}\int^{t}_{0}\left(\left\|\nabla \omega(\tau)\right\|^{2}_{L^{2}} +\left\|\nabla J(\tau)\right\|^{2}_{L^{2}}\right)d\tau\\
& \leq C\left\|u(t)\right\|^{2}_{L^{2}}+C\left\|B(t)\right\|^{2}_{L^{2}} +C\int^{t}_{0}\left(\nu\left\|\nabla u(t)\right\|^{2}_{L^{2}}+\eta\left\|\nabla B(t)\right\|^{2}_{L^{2}}\right)d\tau\\
&\quad+\left\|\Phi(t)\right\|^{2}_{L^{2}} +\left\|\Psi(t)\right\|^{2}_{L^{2}}+\frac{\mu}{2}\int^{t}_{0}\left( \left\|\nabla \Phi(t)\right\|^{2}_{L^{2}} +\left\|\nabla \Psi(t)\right\|^{2}_{L^{2}}\right)d\tau \leq C\mathcal{E}_0+\frac{\mu^2}{C_2^2}
\end{split}
\een
for all $t\in[0,{T_{\text{max}}})$.
In particular, (\ref{energy inequality}) and (\ref{omega J L2 bound dd}) imply
\[
\mu\int^{t}_{0} \left\|u(\tau)\right\|^{2}_{\dot{H}^{\frac{3}{2}}}d\tau \leq C\mu\int^{t}_{0} \left(\left\|\nabla u(\tau)\right\|^{2}_{L^{2}}+\left\|\nabla \omega(\tau)\right\|^{2}_{L^{2}}\right)d\tau \leq C\left(\mathcal{E}_0+\frac{\mu^2}{C_2^2}\right).
\]
Similarly, we bound $J$ using \eqref{energy inequality}, \eqref{smallness solution HMHD dd} and \eqref{omega J L2 bound dd} 
\[
\begin{split}
\left\|J(t)\right\|^{2}_{\dot{H}^{\frac{1}{2}}} +\frac{\mu}{2}\int^{t}_{0}\left\|J(\tau)\right\|^{2}_{\dot{H}^{\frac{3}{2}}}d\tau & \leq \left\|u(t)\right\|^{2}_{\dot{H}^{\frac{1}{2}}}+|\beta|\left\|B(t)\right\|^{2}_{\dot{H}^{\frac{1}{2}}}+\left\|\Psi(t)\right\|^{2}_{\dot{H}^{\frac{1}{2}}}\\
&\quad+\frac{\mu}{2}\int^{t}_{0}\left(\left\|u(\tau)\right\|^{2}_{\dot{H}^{\frac{3}{2}}}+|\beta|\left\|B(\tau)\right\|^{2}_{\dot{H}^{\frac{3}{2}}}+\left\|\Psi(\tau)\right\|^{2}_{\dot{H}^{\frac{3}{2}}}\right)d\tau\\
&\leq \left\|u(t)\right\|^{2}_{L^{2}}+\left\|\omega(t)\right\|^{2}_{L^{2}} +\frac{\mu}{2}\int^{t}_{0}\left(\left\|\nabla u(\tau)\right\|^{2}_{L^{2}}+\left\|\nabla \omega(\tau)\right\|^{2}_{L^{2}}\right)d\tau \\
&\quad+|\beta|\left[ \left\|B(t)\right\|^{2}_{L^{2}} 
+\left\|J(t)\right\|^{2}_{L^{2}}
+\frac{\mu}{2}\int^{t}_{0}\left(\left\|\nabla B(\tau)\right\|^{2}_{L^{2}}+\left\|\nabla J(\tau)\right\|^{2}_{L^{2}}\right)d\tau\right] \\
&\quad+\norm{\Psi(t)}{\dot{H}^{\frac{1}{2}}}^2+\frac{\mu}{2}\int^{t}_{0}\left\|\Psi(\tau)\right\|^{2}_{\dot{H}^{\frac{3}{2}}} d\tau \\
&\leq C\left(\mathcal{E}_0+\frac{\mu^2}{C_2^2}\right).
\end{split}
\]

Hence, we deduce that 
\[
\int^{{T_{\text{max}}}}_0 \|u(\tau)\|_{{\text{BMO}}}^2+\|J(\tau)\|_{\text{{BMO}}}^2 \,d\tau\leq C\int^{{T_{\text{max}}}}_0 \|u(\tau)\|_{\dot{H}^{\frac{3}{2}}}^2+\|J(\tau)\|_{\dot{H}^{\frac{3}{2}}}^2 \,d\tau \leq C\left(\mathcal{E}_0+\frac{\mu^2}{C_2^2}\right),
\]
where BMO is  the space of functions of bounded mean oscillations \cite{John Nirenberg} and we use the continuous embedding $\dot{H}^{\frac{3}{2}}\hookrightarrow\text{BMO}$ in 3D \cite{Brezis Nirenberg} to the first inequality. This implies that $(u,B)$ can be extended over $t={T_{\text{max}}}$ from the blow-up criteria of \eqref{vH MHD} in \cite[Theorem 2]{Chae Lee}. It {is contradictory} to the definition of ${T_{\text{max}}}$ {and thus we conclude that}  ${T_{\text{max}}}=\infty$.

%%%%%%%%%%%%%%%
\section*{Acknowledgments}
%%%%%%%%%%%%%%%
H. Bae was supported by the National Research Foundation of Korea (NRF) grant funded by the Korea government (MSIT) (RS-2024-00341870). 

K. Kang was supported by the National Research Foundation of Korea(NRF) grant funded by the Korea government (MSIT) (RS-2024-00336346 and RS-2024-00406821). 

J. Shin was supported by the National Research Foundation of Korea(NRF) grant funded by the Korea government (MSIT) (RS-2024-00406821).

\section*{Data Availability}

Data sharing is not applicable to this article as no datasets were created or analyzed in this study.

\end{document}